\newtheorem{theorem}{Theorem}[section]
\newtheorem{definition}[theorem]{Definition}
\newtheorem{proposition}[theorem]{Proposition}
\newtheorem{corollary}[theorem]{Corollary}
\newtheorem{lemma}[theorem]{Lemma}
\newtheorem{remark}[theorem]{Remark}
\newtheorem{example}[theorem]{Example}
\newtheorem{examples}[theorem]{Examples}
\newtheorem{foo}[theorem]{Remarks}
\newenvironment{Example}{\begin{example}\rm}{\end{example}}
\newenvironment{Remark}{\begin{remark}\rm}{\end{remark}}
\newcommand{\be}{\begin{equation}}
\newcommand{\ee}{\end{equation}}
\newcommand{\bea}{\begin{eqnarray}}
\newcommand{\eea}{\end{eqnarray}}
\newcommand{\beas}{\begin{eqnarray*}}
\newcommand{\eeas}{\end{eqnarray*}}
\newcommand{\brak}[1]{\ensuremath{\left( #1 \right)}}
\newcommand{\crl}[1]{\ensuremath{ \left\{ #1 \right\} }}
\newcommand{\edg}[1]{\ensuremath{ \left[ #1 \right] }}
\newcommand{\ang}[1]{\ensuremath{ \left \langle #1 \right \rangle }}
\newcommand{\essinf}{\mathop{\rm ess\,inf}}
\newcommand{\esssup}{\mathop{\rm ess\,sup}}
\newcommand{\st}{{\rm st}}
\newcommand{\sst}{{\rm sst}}
\newcommand{\conv}{{\rm conv}}
\newcommand{\aff}{{\rm aff}}
\newcommand{\lin}{{\rm lin}}
\newcommand{\cone}{{\rm ccone}}
\newcommand{\inn}{{\rm int}}
\newcommand{\ri}{{\rm ri}}
\newcommand{\N}[1]{||#1||}
\newcommand{\cl}{{\rm cl}}
\newcommand{\dom}{{\rm dom \,}}
\begin{document}

\title{Conditional Analysis on $\mathbb{R}^d$
\author{\large Patrick Cheridito\footnote{Princeton University, Princeton, NJ 08544, USA.}
\and \large Michael Kupper\footnote{University of Konstanz, Universit\"atstra\ss e 10, 78464 Konstanz Germany.}
\and \large Nicolas Vogelpoth\footnote{20 Main Mill, Mumfords Mill,
19-23 Greenwich High Road, SE10 8ND London, UK.}}}

\date{\large October 2014\footnote{We thank 
Ramon van Handel, Ying Hu, Asgar Jamneshan, Mitja Stadje and 
Martin Streckfu\ss\, for fruitful discussions and helpful comments.}}

\maketitle

\begin{abstract}
\noindent 
{\bf Abstract.} This paper provides versions of classical results from 
linear algebra, real analysis and convex analysis
in a free module of finite rank over the ring $L^0$ of measurable 
functions on a $\sigma$-finite measure space. 
We study the question whether a submodule is finitely generated 
and introduce the more general concepts of $L^0$-affine sets, $L^0$-convex sets,
$L^0$-convex cones, $L^0$-hyperplanes and $L^0$-halfspaces. We investigate orthogonal complements,
orthogonal decompositions and the existence of orthonormal bases. 
We also study $L^0$-linear, $L^0$-affine,
$L^0$-convex and $L^0$-sublinear functions and introduce notions
of continuity, differentiability, directional derivatives and subgradients.
We use a conditional version of the Bolzano--Weierstrass theorem to 
show that conditional Cauchy sequences converge and 
give conditions under which conditional optimization problems have optimal solutions. 
We prove results on the separation of $L^0$-convex sets by $L^0$-hyperplanes 
and study $L^0$-convex conjugate functions. We provide a result on the existence of 
$L^0$-subgradients of $L^0$-convex functions, prove a conditional version of the Fenchel--Moreau theorem
and study conditional inf-convolutions.\\[2mm]
{\bf Keywords:} $L^0$-modules, random sets, conditional optimization, $L^0$-differentiability,
$L^0$-convexity, separating $L^0$-hyperplanes, $L^0$-convex conjugation,
$L^0$-subgradients.\\[2mm]
{\bf 2010 Mathematics Subject Classification:} 13C13, 46A19, 46A22, 60H25


\end{abstract}

\setcounter{equation}{0}
\section{Introduction}
\label{sec:intr}

Let $L^0$ be the set of all real-valued measurable functions on a $\sigma$-finite measure space 
$(\Omega, {\cal F}, \mu)$, where two of them are identified if they agree $\mu$-almost everywhere. 
The purpose of this paper is to study the set $(L^0)^d$ of all 
$d$-dimensional vectors with components in $L^0$ and functions 
$f : (L^0)^d \to L^0$. Its main motivation are applications in the following two special cases:
\begin{itemize}
\item 
If $\mu$ is a probability measure, the elements of $L^0$ are random variables, and 
subsets $C \subseteq (L^0)^d$ can be understood as random sets in $\mathbb{R}^d$.
A typical function $f : (L^0)^d \to L^0$ would, for example, be a mapping that 
conditionally on ${\cal F}$, assigns to every random point $X \in (L^0)^d$ its Euclidean distance to $C$.
\item
Let $(\Omega, {\cal G}, \mu)$ be the product of a $\sigma$-finite measure space 
$(\mathbb{T}, {\cal H}, \nu)$ and a probability space $(E, {\cal E}, P)$.
If ${\cal F}$ is a sub-$\sigma$-algebra of ${\cal G}$, the elements of $L^0$ are
stochastic processes $(X_t)_{t \in \mathbb{T}}$ on $(E, {\cal E}, P)$.
A subset $C \subseteq (L^0)^d$ could, for instance, describe the set of admissible strategies in a 
stochastic control problem, and an optimal strategy could be characterized as the 
conditional optimizer of an appropriate function $f : (L^0)^d \to L^0$ over $C$.
\end{itemize}
Unless $\Omega$ is the union of finitely many atoms, 
$(L^0)^d$ is an infinite-dimensional vector space over $\mathbb{R}$. But conditioned on ${\cal F}$, 
it is only $d$-dimensional. Or put differently, it is a free module of rank $d$ over the ring $L^0$.
This allows us to derive conditional analogs of classical results from 
linear algebra, real analysis and convex analysis that depend on the fact that 
$\mathbb{R}^d$ is a finite-dimensional vector space. $L^0$-modules have been studied before; see, 
for instance, Filipovi\'c et al. (2009), Kupper and Vogelpoth (2009), Guo (2010), Guo (2011)
and the references in these papers. But since we consider free modules of finite rank, we are able to provide
stronger results under weaker assumptions,
and moreover, do not need Zorn's lemma or the axiom of choice. 
Our approach differs from standard measurable selection arguments in that we 
work modulo null-sets with respect to the measure $\mu$ 
and do not use $\omega$-wise arguments.
This has the advantage that one never leaves the world of measurable functions.
But it only works in situations where a measure $\mu$ is given, and the quantities of 
interest do not depend on $\mu$-null sets.

The results in this paper are theoretical. But they have already been applied several times:
in Cheridito and Hu (2011), they were used to describe stochastic constraints and 
characterize optimal 
strategies in a dynamic consumption and investment problem. In Cheridito and Stadje (2012) they
guaranteed the existence of a conditional subgradient.
In Cheridito et al. (2012) they were applied to show existence and uniqueness of
economic equilibria in incomplete market models.

The structure of the paper is as follows: In Section \ref{sec:algebraic} we investigate
when an $L^0$-submodule of $(L^0)^d$ is finitely generated. Then we study 
conditional orthogonality and introduce $L^0$-affine sets, $L^0$-convex sets and $L^0$-convex cones. 
It turns out that the notion of $\sigma$-stability plays a crucial role. In Section \ref{sec:limits}
we investigate almost everywhere converging sequences in $(L^0)^d$ and the corresponding notion of
closure. We define $L^0$-linear and $L^0$-affine functions $f : (L^0)^d \to (L^0)^k$ and show that 
they are continuous with respect to almost everywhere converging sequences. We also give a conditional version of the 
Bolzano--Weierstrass theorem and show that conditional Cauchy sequences converge.
Moreover, we define $L^0$-bounded sets and give a condition for $L^0$-convex sets to be $L^0$-bounded.
In Section \ref{sec:opti} we study sequentially semicontinuous and $L^0$-convex functions 
$f : (L^0)^d \to L^0$ and prove a result which guarantees that a conditional optimization problem
has an optimal solution. Section \ref{sec:open} is devoted to $L^0$-open sets, interiors 
and relative interiors. $L^0$-open sets form a topology, but they are not complements of 
sequentially closed sets. In Section \ref{sec:sep} we give strong, weak and proper 
separation results of $L^0$-convex sets by $L^0$-hyperplanes.
Section \ref{sec:convex} studies $L^0$-convex functions and introduces conditional 
notions of differentiability, directional derivatives, subgradients and convex conjugation.
We also provide results on the existence of conditional subgradients and give a conditional version 
of the Fenchel--Moreau theorem. In Section \ref{sec:convolution} we study 
conditional inf-convolutions.\\[2mm]
\noindent
{\bf Notation.} We assume $\mu(\Omega) > 0$
and define  ${\cal F}_+ := \{A\in{\cal F} : \mu[A]>0\}$.
By $L$ we denote the set of all measurable functions $X : \Omega
\to \mathbb{R} \cup \crl{\pm \infty}$, where two of them are
identified if they agree a.e. (almost everywhere). In particular,
for $X,Y \in L$, $X=Y$, $X > Y$ and $X \ge Y$ will be understood in the
a.e. sense. Analogously, for sets $A,B \in {\cal F}$, we write $A
= B$ if $\mu[A \triangle B] = 0$ and $A \subseteq B$ if $\mu[A \setminus B] = 0$. 
The set $L^0 := \crl{X \in L : |X| < \infty}$
with the a.e. order is a lattice ordered ring, and every non-empty
subset $C$ of $L$ has a least upper bound and a greatest lower bound in $L$ with 
respect to the a.e. order. We follow the usual convention in measure theory and
denote them by $\esssup C$ and $\essinf C$, respectively. It is
well-known (see for instance, Neveu, 1975) that there exist sequences $(X_n)$
and $(Y_n)$ in $C$ such that $\esssup C = \sup_n X_n$ and $\essinf C = \inf_n Y_n$. Moreover, if $C$ is
directed upwards, $(X_n)$ can be chosen such that $X_{n+1} \ge X_n$, and if $C$ is 
directed downwards, $(Y_n)$ can be chosen so that $Y_{n+1} \le Y_n$. For a set 
$A \in {\cal F}$, we denote by $1_A$ the characterisitc function of $A$, that is, 
the function $1_A : \Omega \to \crl{0,1}$ which is $1$ on $A$ and $0$ elsewhere.
If ${\cal A}$ is a subset of ${\cal F}$, we set $\esssup {\cal A} := \crl{\esssup_{A \in {\cal A}} 1_A =1} \in {\cal F}$
and $\essinf {\cal A} := \crl{\essinf_{A \in {\cal A}} 1_A =1} \in {\cal F}$.
Futhermore, we use the notation $L^0_+ := \{X \in L^0 : X \ge0\}$,  $L^0_{++} :=\{X \in L^0 : X >0\}$,  
$\overline{L} := \crl{X \in L : X > -\infty}$, $\underline{L} := \crl{X \in L : X < +\infty}$ and
${\mathbb N} := \crl{1,2, \dots}$. By $\mathbb{N}({\cal F})$ we denote the set of all measurable 
functions $N : \Omega \to \mathbb{N}$.

\setcounter{equation}{0}
\section{Algebraic structures and generating sets}
\label{sec:algebraic}

We fix $d \in{\mathbb N}$ and consider the set $(L^0)^d:=\crl{(X^1,\dots,X^d): X^i\in L^0 }$.
On $(L^0)^d$ we define the conditional inner product and conditional $2$-norm by
$$
\ang{X,Y} := \sum_{i=1}^d X^i Y^i \quad \mbox{and} \quad \N{X} := \ang{X,X}^{1/2}.
$$  
For every $A \in{\cal F}$, $1_A L^0$ is a subring of $L^0$, and provided that $\mu[A] > 0$,
$1_A (L^0)^d$ is a free $1_A L^0$-module of rank $d$ generated by the base
$1_A e_i$, $i=1, \dots, d$, where $e_i$ is the $i$-th unit vector in
$\mathbb{R}^d \subseteq (L^0)^d$. In particular, $(L^0)^d$ is a free $L^0$-module of rank $d$.

\begin{definition}
We call a subset $C$ of $(L^0)^d$
\begin{itemize}
\item stable if $1_A X + 1_{A^c} Y \in C$ for all $X,Y \in C$ and $A \in {\cal F}$;
\item $\sigma$-stable if $\sum_{n \in \mathbb{N}} 1_{A_n} X_n \in C$ for every sequence
$(X_n)_{n \in \mathbb{N}}$ in $C$ and pairwise disjoint sets $A_n \in {\cal F}$
satisfying $\Omega = \bigcup_{n \in \mathbb{N}} A_n$;
\item $L^0$-convex if $\lambda X + (1-\lambda)Y \in C$ for all $X,Y \in C$ and $\lambda \in L^0$
such that $0 \le \lambda \le 1$;
\item an $L^0$-convex cone if it is $L^0$-convex and $\lambda X \in C$ for all $X \in C$ and $\lambda
\in L^0_{++}$;
\item $L^0$-affine if $\lambda X + (1-\lambda)Y \in C$ for all $X,Y \in C$ and $\lambda \in L^0$;
\item $L^0$-linear (or an $L^0$-submodule) if $\lambda X + Y \in C$ for all $X,Y \in C$ and $\lambda \in L^0$.
\end{itemize}
For an arbitrary subset $C$ of $(L^0)^d$ and $A \in {\cal F}$, we denote
by $\st_A(C)$, $\sst_A(C)$, $\conv_A(C)$, $\cone_A(C)$, $\aff_A(C)$, $\lin_A(C)$
the smallest subset of $1_A(L^0)^d$ containing $1_A C$
that is stable, $\sigma$-stable, $L^0$-convex,
an $L^0$-convex cone, $L^0$-affine, or $L^0$-linear, respectively.
If $A = \Omega$, we just write $\st(C)$, $\sst(C)$, $\conv(C)$, $\cone(C)$,
$\aff(C)$, $\lin(C)$ for these sets.
\end{definition}

\begin{Remark}\label{Rem1}
It can easily be checked that if $C$ is a non-empty subset of $(L^0)^d$ and $A \in {\cal F}$, then
\beas
&& \st_A(C) = \crl{\sum_{n =1}^k 1_{A_n} X_n : k \in \mathbb{N}, \, X_n \in C, \, A_n \in {\cal F}, \, \bigcup_{n=1}^k A_n = A, \,
A_m \cap A_n = \emptyset \mbox{ for } m \neq n};\\
&& \sst_A(C) = \crl{\sum_{n \in \mathbb{N}} 1_{A_n} X_n :
X_n \in C, \, A_n \in {\cal F}, \, \bigcup_{n\in\mathbb{N}} A_n = A, \,
A_m \cap A_n = \emptyset \mbox{ for } m \neq n};\\
&& \conv_A(C) = \crl{\sum_{n =1}^k \lambda_n X_n : k \in \mathbb{N}, \,
X_n \in C, \, \lambda_n \in 1_A L^0_+, \, \sum_{n=1}^k \lambda_n = 1_A};\\
&& \cone_A(C) = \crl{\sum_{n =1}^k \lambda_n X_n : k \in \mathbb{N}, \,
X_n \in C, \, \lambda_n \in 1_A L^0_+, \, \sum_{n=1}^k \lambda_n \in 1_A L^0_{++}};\\
&& \aff_A(C) = \crl{\sum_{n =1}^k \lambda_n X_n : k \in \mathbb{N}, \,
X_n \in C, \, \lambda_n \in 1_A L^0, \, \sum_{n=1}^k \lambda_n =1_A};\\
&& \lin_A(C) = \crl{\sum_{n =1}^k \lambda_n X_n : k \in \mathbb{N}, \,
X_n \in C, \, \lambda_n \in 1_A L^0}.
\eeas
It follows that if $C = \crl{X_1, \dots, X_k}$ for finitely many $X_1, \dots, X_k \in (L^0)^d$, then the sets
$\conv_A(C)$, $\cone_A(C)$, $\aff_A(C)$, $\lin_A(C)$ are all $\sigma$-stable.
\end{Remark}

\begin{definition}
Let $A \in {\cal F}_+$ and $k \in \mathbb{N}$.
We call $X_1,\ldots,X_k \in (L^0)^d$ linearly independent on $A$ if
$1_A X_1, \ldots, 1_A X_k$ are linearly independent in the $1_A L^0$-module $1_A (L^0)^d$, that is,
$(0,\dots,0)$ is the only vector $(\lambda_1, \dots, \lambda_k) \in 1_A (L^0)^k$
satisfying
$$
\lambda_1 X_1 + \dots + \lambda_k X_k = 0.
$$
We say that $X_1,\ldots,X_k$ are orthogonal on $A$ if
$1_A \ang{X_i, X_j} = 0$ for $i \neq j$ and orthonormal on $A$ if in addition,
$1_A \N{X_i} = 1_A$, $1 \le i \le k$. If $X_1, \dots, X_k$ are linearly independent on 
$A$ and $\lin_A\crl{X_1, \dots, X_k} = 1_A C$ for some subset $C$ of $(L^0)^d$,
we call them a basis of $C$ on $A$. If in addition,
$X_1, \dots, X_k$ are orthogonal or orthonormal on $A$, we say $X_1, \dots, X_k$
is an orthogonal or orthonormal basis of $C$ on $A$, respectively.
\end{definition}

\begin{lemma} \label{lemmaspan}
Let $A \in {\cal F}$ and $X_1, \dots, X_k, Y \in (L^0)^d$ for some $k \in \mathbb{N}$.
Then there exists a largest subset $B \in {\cal F}$ of $A$ such that 
$1_B Y \in \lin_B \crl{X_1, \dots, X_k}$. 
\end{lemma}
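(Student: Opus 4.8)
The plan is to build the set $B$ by exhausting $A$ with a maximal family of "good" sets, using the $\sigma$-finiteness of $\mu$ to pass to a countable subfamily and then $\sigma$-stability of the span to glue. First I would define
$$
{\cal B} := \crl{ C \in {\cal F} : C \subseteq A, \ 1_C Y \in \lin_C\crl{X_1, \dots, X_k}}.
$$
This family is non-empty since $\emptyset \in {\cal B}$ (or, on the trivial part where all $X_i$ and $Y$ vanish, $A$ itself works). The candidate for the largest element is $B := \esssup {\cal B}$. Since $\mu$ is $\sigma$-finite, $\esssup {\cal B}$ is attained along a countable subfamily $(C_n)_{n \in \mathbb{N}} \subseteq {\cal B}$ with $B = \bigcup_n C_n$, and after the standard disjointification $D_1 := C_1$, $D_n := C_n \setminus \bigcup_{m<n} D_m$ we get pairwise disjoint $D_n \in {\cal F}$ with $\bigcup_n D_n = B$ and each $D_n \subseteq C_n$, hence still $D_n \in {\cal B}$ (restricting a representation $1_{C_n} Y = \sum_i \lambda_i X_i$ by multiplying with $1_{D_n}$ shows $1_{D_n} Y \in \lin_{D_n}\crl{X_1,\dots,X_k}$).

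Next I would show $B \in {\cal B}$. For each $n$ pick $(\lambda^n_1, \dots, \lambda^n_k) \in 1_{D_n} L^0$ with $1_{D_n} Y = \sum_{i=1}^k \lambda^n_i X_i$. Set $\lambda_i := \sum_{n \in \mathbb{N}} \lambda^n_i \in L^0$; the sum is well-defined because the $D_n$ are disjoint, and $1_B \lambda_i = \lambda_i$. Then
$$
1_B Y = \sum_{n \in \mathbb{N}} 1_{D_n} Y = \sum_{n \in \mathbb{N}} \sum_{i=1}^k \lambda^n_i X_i = \sum_{i=1}^k \lambda_i X_i \in \lin_B\crl{X_1, \dots, X_k},
$$
where interchanging the two sums is legitimate since for each $i$ only the summand with index matching the (a.e. unique) set $D_n$ containing a given $\omega$ is nonzero. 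This is exactly the $\sigma$-stability of $\lin_B\crl{X_1,\dots,X_k}$ noted in Remark \ref{Rem1}. Hence $B \in {\cal B}$.

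Finally, maximality: if $C \in {\cal B}$ then $C \subseteq B$ by definition of the essential supremum, so $B$ is the largest element of ${\cal B}$, which is precisely the assertion. The only genuinely delicate point is the gluing step — one must check that the countably many local coefficient vectors combine into a single $L^0$-valued coefficient vector without blow-up, and this is handled cleanly by the disjointness of the $D_n$, so no convergence or growth estimate is needed. I expect the main obstacle in writing this up carefully to be bookkeeping: making sure the representations are restricted to the $D_n$ before summing, and invoking the $\sigma$-stability from Remark \ref{Rem1} (equivalently, the explicit description of $\lin_B$) rather than re-deriving it.
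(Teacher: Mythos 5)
Your proof is correct and follows essentially the same route as the paper: both define the family of good subsets of $A$, take its essential supremum, realize it as a countable union, and glue the coefficient representations to show the supremum itself belongs to the family. The only cosmetic difference is that the paper notes the family is directed upwards and extracts an increasing sequence, whereas you disjointify an arbitrary countable subfamily; your write-up is in fact more explicit about the gluing step, which the paper leaves implicit.
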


\begin{proof}
The set
$$
{\cal A} := \crl{B \in {\cal F} : B \subseteq A \mbox{ and } 1_B Y \in \lin_B \crl{X_1, \dots, X_k}}
$$
is directed upwards. So it contains an increasing sequence 
$(B_n)_{n \in \mathbb{N}}$ such that $B:=\bigcup_n B_n = \esssup {\cal A}$.
$B$ is the largest element of ${\cal A}$.
\end{proof}

\begin{proposition} \label{propkl}
Let $A \in {\cal F}_+$ and $k,l \in \mathbb{N}$.
Assume $X_1, \dots, X_k \in (L^0)^d$ are linearly independent on $A$ and 
$\lin_A \crl{X_1, \dots, X_k} \subseteq \lin_A \crl{Y_1, \dots, Y_l}$
for some $Y_1, \dots, Y_l \in (L^0)^d$. Then $k \le l$. Moreover, if
$k=l$, then $Y_1, \dots, Y_l$ are linearly independent on $A$ and
$\lin_A \crl{X_1, \dots, X_k} = \lin_A \crl{Y_1, \dots, Y_l}$.
\end{proposition}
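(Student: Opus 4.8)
The statement is a conditional version of the Steinitz exchange lemma, and the plan is to establish the inequality $k\le l$ by induction on $l$ and then to read off the supplementary assertions from it. In the classical argument over a field one repeatedly divides by a nonzero scalar; the main obstacle here is that a coefficient $\lambda\in L^0$ need not be invertible, and may well vanish on a set of positive measure, so each such division must be performed only on the set $\crl{\lambda\neq 0}$ and the resulting local membership statements reassembled afterwards. Two facts proved above make this possible: $\lin_B\crl{W_1,\dots,W_r}$ is $\sigma$-stable for any finitely many $W_1,\dots,W_r$ (Remark \ref{Rem1}), so that membership in such a span can be verified on the pieces of a countable measurable partition of $B$; and Lemma \ref{lemmaspan}, which yields largest sets on which a given vector lies in a given span. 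I will also use without comment that linear independence on $A$ restricts to linear independence on every $B\subseteq A$, and that the case $k\le 1$ makes $k\le l$ trivial, so that in each step I may assume $k\ge 2$ and then apply the induction hypothesis to the $k-1\ge 1$ vectors produced below.

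For $k\le l$: if $l=1$, write $1_AX_i=\alpha_iY_1$ and note that $\alpha_pX_q-\alpha_qX_p=0$ on $A$ for all $p,q$, so (using $k\ge 2$) linear independence forces every $\alpha_i=0$ and hence $1_AX_1=0$, which since $\mu[A]>0$ contradicts linear independence; thus $k\le l$. For the inductive step, write $1_AX_i=\sum_{m=1}^l\beta_{im}Y_m$ and let $B\subseteq A$ be the set on which $\beta_{il}=0$ for every $i$. On $B$ one has $\lin_B\crl{X_1,\dots,X_k}\subseteq\lin_B\crl{Y_1,\dots,Y_{l-1}}$ with the $X_i$ still linearly independent, so if $\mu[B]>0$ the induction hypothesis gives $k\le l-1$. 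On $A\setminus B=\bigcup_{i=1}^k\crl{\beta_{il}\neq 0}$ choose a finite measurable partition $E_1,\dots,E_k$ with $E_i\subseteq\crl{\beta_{il}\neq 0}$; on $E_i$ the coefficient $\beta_{il}$ is invertible in $1_{E_i}L^0$, so $1_{E_i}Y_l$ can be solved for in terms of $X_i$ and $Y_1,\dots,Y_{l-1}$, and substituting this back into the expressions for the remaining $X_{i'}$ shows that the $k-1$ vectors $1_{E_i}X_{i'}-\beta_{i'l}\beta_{il}^{-1}X_i$, $i'\neq i$, lie in $\lin_{E_i}\crl{Y_1,\dots,Y_{l-1}}$ and remain linearly independent on $E_i$; hence if $\mu[E_i]>0$ the induction hypothesis gives $k-1\le l-1$. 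Since $\mu[A]>0$ and $A$ is the union of $B$ and the $E_i$, at least one of these sets has positive measure, and on it we have derived $k\le l$; as $k$ and $l$ are integers, $k\le l$.

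For the supplement, let $k=l$. To prove equality of the spans it suffices to show $1_AY_m\in\lin_A\crl{X_1,\dots,X_k}$ for each $m$. Fix $m$ and let $B\subseteq A$ be the largest set with $1_BY_m\in\lin_B\crl{X_1,\dots,X_k}$, which exists by Lemma \ref{lemmaspan}. If $\mu[A\setminus B]>0$, then $X_1,\dots,X_k,Y_m$ are linearly independent on $A\setminus B$: given a relation $\sum_i\lambda_iX_i+\nu Y_m=0$ there, on the set $\crl{\nu\neq 0}$ one solves for $Y_m$ and, gluing with $1_BY_m$ by $\sigma$-stability, enlarges $B$, contradicting its maximality, so $\nu=0$ on $A\setminus B$ and then linear independence of the $X_i$ annihilates the remaining coefficients. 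But then $A\setminus B\in{\cal F}_+$ carries the $k+1=l+1$ linearly independent vectors $X_1,\dots,X_k,Y_m$ inside $\lin_{A\setminus B}\crl{Y_1,\dots,Y_l}$, contradicting the inequality already proved; hence $B=A$, and as this holds for every $m$ we get $\lin_A\crl{Y_1,\dots,Y_l}\subseteq\lin_A\crl{X_1,\dots,X_k}$, so the two spans coincide. Finally, if $\sum_m\nu_mY_m=0$ with coefficients in $1_AL^0$ and some $\nu_m$ nonzero on a set of positive measure, then on a set where one fixed $\nu_m$ is invertible that $Y_m$ is a combination of the remaining $Y$'s, so there $l-1$ vectors span $\lin\crl{X_1,\dots,X_l}$, which the inequality already proved forbids; thus $Y_1,\dots,Y_l$ are linearly independent on $A$. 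The technical heart is the inductive step for $k\le l$: the measurable choice of the partition $E_1,\dots,E_k$ and the bookkeeping needed to check that the exchanged vectors stay linearly independent.
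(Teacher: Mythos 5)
Your proof is correct and follows essentially the same route as the paper's: a Steinitz-type exchange carried out on measurable sets where the relevant coefficient is invertible to get $k \le l$, then Lemma \ref{lemmaspan} combined with that inequality for the supplementary claims. The only differences are organizational---you run the exchange as an induction on $l$ over a measurable partition of $A$ and prove equality of the spans before linear independence of the $Y_j$, whereas the paper iterates the exchange on a nested sequence of positive-measure subsets and proves the two supplementary claims in the opposite order---but the substance is the same.
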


\begin{proof}
One can write $1_A X_1 = \sum_{i=1}^l \lambda_i 1_A Y_i$ for some $\lambda_i \in L^0$.
So there exists a $\sigma(1) \in \crl{1, \dots, l}$ such that 
$A_1 := A \cap \crl{\lambda_{\sigma(1)} \neq 0} \in {\cal F}_+$, and one obtains
$$
\lin_{A_1} \crl{X_1, \dots, X_k} \subseteq
\lin_{A_1} \crl{Y_1, \dots, Y_l} = \lin_{A_1}(\crl{X_1, Y_1, \dots, Y_l} 
\setminus \crl{Y_{\sigma(1)}}).
$$
In particular, if $k \ge 2$, one must have $l \ge 2$, and it follows inductively that there exist 
$A_2, \dots, A_d \in {\cal F}_+$ and an injection $\sigma : \crl{1, \dots, k} \to \crl{1, \dots, l}$ 
such that for all $i \in \crl{1, \dots, k}$,
$$
\lin_{A_i} \crl{X_1, \dots, X_k} \subseteq
\lin_{A_i} \crl{Y_1, \dots, Y_l} = 
\lin_{A_i}(\crl{X_1, \dots, X_i, Y_1, \dots, Y_l} \setminus \crl{Y_{\sigma(1)}, \dots, Y_{\sigma(i)}}).
$$
This shows that $k \le l$.

Now assume $k = l$ and $Y_1, \dots, Y_l$ are not linearly independent on $A$. Then there exist
$B \in {\cal F}_+$ and $j \in \crl{1, \dots, k}$ such that 
$$
\lin_B \crl{X_1, \dots, X_k} \subseteq \lin_B \crl{Y_1, \dots, Y_k} =
\lin_B(\crl{Y_1, \dots, Y_k} \setminus \crl{Y_j}),
$$
a contradiction to the first part of the proposition. So if $k=l$, $Y_1, \dots, Y_k$ must be 
linearly independent on $A$, and it remains to show that $\lin_A \crl{X_1, \dots, X_k} = \lin_A \crl{Y_1, \dots, Y_k}$.
To do this, we assume that $\lin_A \crl{X_1, \dots, X_k} \subsetneq \lin_A \crl{Y_1, \dots, Y_k}$.
Then $Y_j \notin \lin_A \crl{X_1, \dots, X_k}$ for at least one $j \in \crl{1, \dots, k}$. 
By Lemma \ref{lemmaspan}, there exists a largest subset $B \in {\cal F}$ of $A$ 
such that $1_B Y_j \in \lin_B \crl{X_1, \dots, X_k}$. The set $D := A \setminus B$ is in ${\cal F}_+$,
and $X_1, \dots, X_k,Y_j$ are linearly independent on $D$.
But then
$$
\lin_D \crl{X_1, \dots, X_k, Y_j} \subseteq \lin_D \crl{Y_1, \dots, Y_k},
$$
again contradicts the first part of the proposition, and the proof is complete.
\end{proof}

\begin{corollary} \label{corkl}
Let $A \in {\cal F}_+$ and $k,l \in \mathbb{N}$. Assume $X_1, \dots, X_k \in (L^0)^d$ are 
linearly independent on $A$ and 
$\lin_A \crl{X_1, \dots, X_k} = \lin_A \crl{Y_1, \dots, Y_l}$
for some $Y_1, \dots, Y_l \in (L^0)^d$ that are also linearly independent on $A$.
Then $k = l \le d$, and if $k = l = d$, one has
$\lin_A \crl{X_1, \dots, X_k} = \lin_A \crl{Y_1, \dots, Y_l} = 1_A (L^0)^d$.
\end{corollary}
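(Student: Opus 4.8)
The plan is to deduce the corollary entirely from Proposition \ref{propkl}, applied three times. First, to get $k = l$: since $X_1, \dots, X_k$ are linearly independent on $A$ and $\lin_A \crl{X_1, \dots, X_k} = \lin_A \crl{Y_1, \dots, Y_l}$, the first assertion of Proposition \ref{propkl} gives $k \le l$. Interchanging the roles of the $X_i$'s and the $Y_j$'s --- legitimate because the $Y_j$'s are likewise linearly independent on $A$ and the reverse inclusion holds by the assumed equality --- yields $l \le k$, hence $k = l$.

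Second, to bound $k$ by $d$: as recorded right before the first Definition of the section, $1_A (L^0)^d$ is a free $1_A L^0$-module of rank $d$ with base $1_A e_1, \dots, 1_A e_d$, so $e_1, \dots, e_d$ are linearly independent on $A$ and $\lin_A \crl{e_1, \dots, e_d} = 1_A (L^0)^d$. Applying the first part of Proposition \ref{propkl} with $\crl{Y_1, \dots, Y_l}$ replaced by $\crl{e_1, \dots, e_d}$ --- using $\lin_A \crl{X_1, \dots, X_k} \subseteq 1_A (L^0)^d = \lin_A \crl{e_1, \dots, e_d}$ together with the linear independence of the $X_i$'s --- gives $k \le d$.

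Third, for the case $k = l = d$: I would invoke the ``moreover'' part of Proposition \ref{propkl} once more with $\crl{Y_1, \dots, Y_l}$ replaced by $\crl{e_1, \dots, e_d}$. Since $X_1, \dots, X_d$ are linearly independent on $A$, $\lin_A \crl{X_1, \dots, X_d} \subseteq \lin_A \crl{e_1, \dots, e_d}$, and both spanning sets have exactly $d$ elements, Proposition \ref{propkl} yields $\lin_A \crl{X_1, \dots, X_d} = \lin_A \crl{e_1, \dots, e_d} = 1_A (L^0)^d$; the assumed equality then gives $\lin_A \crl{Y_1, \dots, Y_l} = 1_A (L^0)^d$ as well.

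Since the argument consists only of three applications of the already-proved Proposition \ref{propkl}, I do not expect any genuine difficulty. The single point worth checking is that $e_1, \dots, e_d$ really do form a basis of $1_A(L^0)^d$ on $A$, which is precisely the statement that this module is free of rank $d$, recorded at the beginning of the section.
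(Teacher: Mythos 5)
Your proof is correct and follows essentially the same route as the paper, whose own proof is just the one-line observation that $\lin_A \crl{X_1, \dots, X_k} = \lin_A \crl{Y_1, \dots, Y_l} \subseteq \lin_A\crl{e_1, \dots, e_d} = 1_A (L^0)^d$ combined with Proposition \ref{propkl}. Your write-up merely makes explicit the three applications of that proposition (both inclusion directions for $k=l$, and comparison with the standard basis for $k \le d$ and the case $k=l=d$) that the paper leaves implicit.
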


\begin{proof}
The corollary follows from Proposition \ref{propkl} by noticing that 
$$
\lin_A \crl{X_1, \dots, X_k} = \lin_A \crl{Y_1, \dots, Y_l} \subseteq \lin_A(e_1, \dots, e_d) = 1_A (L^0)^d.
$$
\end{proof}

\begin{lemma} \label{lemmalargestset}
Let $C$ be a non-empty $\sigma$-stable subset of $(L^0)^d$ and 
$X_1, \dots, X_k \in (L^0)^d$ for some $k \in \mathbb{N}$. Then for given $A \in {\cal F}_+$,
each of the collections 
\be \label{A1}
\crl{B \in {\cal F}_+ : B \subseteq A \mbox{ and there exists a } Y \in C \mbox{ such that }
\N{Y} > 0 \mbox{ on } B}
\ee
and
\be \label{A2}
\crl{B \in {\cal F}_+ : B \subseteq A \mbox{ and there exists } Y \in C \mbox{ such that }
X_1, \dots, X_k,Y \mbox{ are linearly independent on } B}
\ee
is either empty or contains a largest set.
\end{lemma}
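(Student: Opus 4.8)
Denote by $\mathcal{B}_i$ the collection in \eqref{A1} for $i=1$ and the one in \eqref{A2} for $i=2$; the two cases will run in parallel. If $\mathcal{B}_i=\emptyset$ there is nothing to prove, so I would assume $\mathcal{B}_i\neq\emptyset$. The plan is to show that $\mathcal{B}_i$ is closed under countable unions and then to extract the largest element by an essential-supremum argument as in the proof of Lemma \ref{lemmaspan}: by the fact recalled in the Notation section (applied to $\crl{1_B : B\in\mathcal{B}_i}\subseteq L^0$) there is a sequence $(B_n)$ in $\mathcal{B}_i$ with $\bigcup_n B_n=\esssup\mathcal{B}_i$; the union $B^\star:=\bigcup_n B_n$ then lies in $\mathcal{B}_i$ by the closure property, contains every member of $\mathcal{B}_i$ by the definition of the essential supremum, and satisfies $B^\star\in{\cal F}_+$ and $B^\star\subseteq A$; hence $B^\star$ is the largest set in $\mathcal{B}_i$.

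The substantive point is thus the closure of $\mathcal{B}_i$ under countable unions, and this is exactly where $\sigma$-stability of $C$ is needed, since it lets me patch witnesses together. Given $B_n\in\mathcal{B}_i$ with witnesses $Y_n\in C$ (so $\N{Y_n}>0$ on $B_n$ when $i=1$, and $X_1,\dots,X_k,Y_n$ linearly independent on $B_n$ when $i=2$), I would disjointify into a measurable partition of $\Omega$, say $D_1:=B_1$, $D_n:=B_n\setminus\bigcup_{m<n}B_m$ for $n\ge 2$, and $D_0:=\Omega\setminus\bigcup_n B_n$, and set $Y:=1_{D_0}Y_1+\sum_{n\ge 1}1_{D_n}Y_n$, which belongs to $C$ by $\sigma$-stability. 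Since $Y$ agrees with $Y_n$ on $D_n\subseteq B_n$, the witness property transfers to each piece: when $i=1$, $\N{Y}=\N{Y_n}>0$ on $D_n$, hence $\N{Y}>0$ on $\bigcup_n B_n$; when $i=2$, linear independence restricts to measurable subsets directly from the definition, so $X_1,\dots,X_k,Y$ are linearly independent on each $D_n$, and then a relation $\sum_j\lambda_j X_j+\mu Y=0$ over $\bigcup_n B_n$, multiplied by $1_{D_n}$, forces $1_{D_n}\lambda_j=0=1_{D_n}\mu$ for every $n$, hence $\lambda_j=\mu=0$. Either way $\bigcup_n B_n\in\mathcal{B}_i$.

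The only mild subtlety is that $\mathcal{B}_i$ is not obviously directed upward, since a single witness $Y$ for a set $B$ need not witness a strictly larger set, so one genuinely has to exploit $\sigma$-stability of $C$ to merge witnesses into one. After that, everything is routine bookkeeping, and I do not anticipate a further obstacle.
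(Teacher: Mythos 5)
Your proof is correct and follows essentially the same route as the paper's: extract a sequence $(B_n)$ realizing $\esssup$ of the collection and use $\sigma$-stability of $C$ to glue the witnesses $Y_n$ into a single $Y\in C$ that witnesses the union. The only difference is that the paper first observes that both collections \emph{are} directed upwards (two witnesses $Y,Y'$ for $B,B'$ merge into the witness $1_BY+1_{B^c}Y'$ for $B\cup B'$ by stability, so your stated worry about a single witness not extending is beside the point), which lets it take $(B_n)$ increasing and patch on the increments $B_n\setminus B_{n-1}$, whereas you disjointify an arbitrary sequence; both variants are fine.
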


\begin{proof}
Let us denote the collection \eqref{A1} by ${\cal A}_1$ and \eqref{A2} by ${\cal A}_2$.
Both are directed upwards. So if either one of them is non-empty, it contains 
an increasing sequence of sets $B_n$ with corresponding $Y_n \in C$, $n \in \mathbb{N}$, such that 
$B:=\bigcup_n B_n = \esssup {\cal A}_i$. Since $C$ is $\sigma$-stable,
$$
Y := Y_1 1_{B_1 \cup B^c} + \sum_{n \ge 2} 1_{B_n \setminus B_{n-1}} Y_n
$$
belongs to $C$. In the first case one has $\N{Y} > 0$ on $B$, and in the second one,
$X_1, \dots, X_k,Y$ are linearly independent on $B$. This proves the lemma.
\end{proof}

\begin{theorem} \label{thmspan}
Let $C$ be a $\sigma$-stable subset of $(L^0)^d$ containing an element $X \neq 0$.
Then there exist a unique number $k \in \crl{1, \dots, d}$, unique pairwise
disjoint sets $A_0, \ldots, A_k \in{\cal F}$ and $X_1, \ldots, X_k \in C$ such that the following hold:
\begin{itemize}
\item[{\rm (i)}]
$\bigcup_{i=0}^k A_i = \Omega$ and $\mu[A_k] >0$;
\item[{\rm (ii)}]
$1_{A_0} C = \crl{0}$;
\item[{\rm (iii)}] 
For all $i \in \crl{1, \dots, k}$ satisfying $\mu[A_i] > 0$, $X_1, \dots, X_i$ is a basis of
$\lin(C)$ on $A_i$.
\end{itemize}
\end{theorem}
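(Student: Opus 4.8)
The plan is to build $k$, the vectors $X_1,\dots,X_k$, and the sets $A_0,\dots,A_k$ by a finite induction driven by Lemma \ref{lemmalargestset}, then to verify (i)--(iii) and uniqueness separately.

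\emph{Construction.} I would set $B_0:=\Omega$ and, inductively, given $B_{i-1}\in{\cal F}_+$ and $X_1,\dots,X_{i-1}\in C$ linearly independent on $B_{i-1}$ (no $X_j$'s when $i=1$), apply Lemma \ref{lemmalargestset} to the collection \eqref{A2} with $A=B_{i-1}$ and these vectors (for $i=1$ this is \eqref{A1} with $A=\Omega$). If the collection is empty, stop and put $k:=i-1$ and $A_k:=B_{i-1}$; otherwise let $B_i\in{\cal F}_+$ be its largest element with witness $X_i\in C$, so that $X_1,\dots,X_i$ are linearly independent on $B_i\subseteq B_{i-1}$, and continue. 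Since $C$ contains $X\neq 0$, the collection \eqref{A1} on $\Omega$ is non-empty, so the process produces at least $B_1,X_1$ and $k\ge 1$; and since $X_1,\dots,X_i$ linearly independent on $B_i\in{\cal F}_+$ together with $\lin_{B_i}\{X_1,\dots,X_i\}\subseteq\lin_{B_i}\{e_1,\dots,e_d\}=1_{B_i}(L^0)^d$ forces $i\le d$ by Proposition \ref{propkl}, the induction must terminate at a stage $\le d+1$, hence $1\le k\le d$. Finally I would set $A_0:=\Omega\setminus B_1$ and $A_i:=B_i\setminus B_{i+1}$ for $1\le i\le k-1$; since $\Omega=B_0\supseteq\dots\supseteq B_k$, the $A_i$ are pairwise disjoint with union $\Omega$ and $\mu[A_k]=\mu[B_k]>0$, which is (i).

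\emph{Properties (ii) and (iii).} For (ii), if some $Y\in C$ had $1_{A_0}Y\neq 0$, then $D:=A_0\cap\{\N{Y}>0\}\in{\cal F}_+$ is disjoint from $B_1$, and $\sigma$-stability would give $Z:=1_{B_1}X_1+1_D Y+1_{\Omega\setminus(B_1\cup D)}X_1\in C$ with $\N{Z}>0$ on $B_1\cup D$, contradicting maximality of $B_1$; hence $1_{A_0}C=\{0\}$. For (iii), fix $i$ with $\mu[A_i]>0$. Since $A_i\subseteq B_i$, the $X_1,\dots,X_i$ are linearly independent on $A_i$, and as they lie in $C$ we get $\lin_{A_i}\{X_1,\dots,X_i\}\subseteq 1_{A_i}\lin(C)$. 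For the reverse inclusion, suppose some $Y\in C$ had $1_{A_i}Y\notin\lin_{A_i}\{X_1,\dots,X_i\}$; by Lemma \ref{lemmaspan} there is a largest $B\subseteq A_i$ with $1_B Y\in\lin_B\{X_1,\dots,X_i\}$, and $D:=A_i\setminus B\in{\cal F}_+$. Splitting off the set where the coefficient of $Y$ is non-zero and using maximality of $B$, together with linear independence of $X_1,\dots,X_i$ on $D\subseteq B_i$, one checks that $X_1,\dots,X_i,Y$ are linearly independent on $D$. If $i=k$ this contradicts the emptiness of the collection \eqref{A2} that terminated the induction (recall $A_k=B_k$); if $i<k$, then $D$ is disjoint from $B_{i+1}$ and $Z:=1_{B_{i+1}}X_{i+1}+1_D Y+1_{\Omega\setminus(B_{i+1}\cup D)}X_{i+1}\in C$ makes $X_1,\dots,X_i,Z$ linearly independent on $B_{i+1}\cup D\supsetneq B_{i+1}$, contradicting maximality of $B_{i+1}$. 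Hence $1_{A_i}\lin(C)=\lin_{A_i}\{X_1,\dots,X_i\}$.

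\emph{Uniqueness and the main obstacle.} The key observation is that $B_j:=\bigcup_{i\ge j}A_i$ has an intrinsic description: it is the largest $B\in{\cal F}$ such that $C$ contains $j$ vectors linearly independent on $B$. Indeed $X_1,\dots,X_j$ witness this by (iii); conversely, if $Y_1,\dots,Y_j\in C$ were linearly independent on some $B$ with $\mu[B\setminus B_j]>0$, then $B$ would meet some $A_i$ with $i<j$ in a set $D\in{\cal F}_+$, and on $D$ one would have $\lin_D\{Y_1,\dots,Y_j\}\subseteq 1_D\lin(C)=\lin_D\{X_1,\dots,X_i\}$ with the $Y$'s linearly independent on $D$, contradicting Proposition \ref{propkl}. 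Thus $B_j$, and therefore $k=\max\{j:\mu[B_j]>0\}$, $A_0=\Omega\setminus B_1$ and $A_i=B_i\setminus B_{i+1}$, are determined by $C$ alone (the vectors $X_1,\dots,X_k$ themselves are not unique, only the basis property). The only genuinely delicate point in the whole argument is tracking linear independence through the countable gluings hidden in Lemma \ref{lemmalargestset} and the finite gluings in (ii)--(iii): one must repeatedly verify that a $\sigma$-stably glued vector remains linearly independent (relative to $X_1,\dots,X_i$) on the union of the pieces where each component was, and that restriction to a smaller set preserves independence; everything else is bookkeeping with the maximality clauses of Lemmas \ref{lemmaspan} and \ref{lemmalargestset}.
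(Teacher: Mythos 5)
Your construction is exactly the paper's: iterate Lemma \ref{lemmalargestset} to obtain the decreasing maximal sets $B_1\supseteq\dots\supseteq B_k$ with witnesses $X_1,\dots,X_k$, bound $k\le d$ via Proposition \ref{propkl}/Corollary \ref{corkl}, set $A_0=B_1^c$, $A_i=B_i\setminus B_{i+1}$, $A_k=B_k$, and derive uniqueness from Corollary \ref{corkl}. The proposal is correct and in fact spells out the maximality-plus-gluing verifications of (ii) and (iii) that the paper leaves implicit.
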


\begin{proof}
That $k$ and the sets $A_0, \dots, A_k$ are unique follows from Corollary \ref{corkl}. To show 
the existence of $A_i$ and $X_i$ satisfying (i)--(iii), we construct them inductively.
Since $C$ contains an element $X \neq 0$, 
it follows from Lemma \ref{lemmalargestset} that there exists a largest 
set $B_1 \in {\cal F}_+$ such that $\N{Y} > 0$ on $B_1$ for some $Y \in C$. Choose such a $Y$ and call it $X_1$.
One must have $1_{B^c_1} C = \crl{0}$.
If there exist no $B \in {\cal F}_+$ and $Y \in C$ such that $X_1,Y$ are linearly independent on $B$,
one obtains from Lemma \ref{lemmaspan} that $1_{B_1} Y \in \lin_{B_1} \crl{X_1}$ for all $Y \in C$,
and therefore, $\lin_{B_1}(C) = \lin_{B_1} \crl{X_1}$.
So one can set $k =1$, $A_0 = B^c_1$ and $A_1 = B_1$. 
On the other hand, if there exists a $B \in {\cal F}_+$ and $Y \in C$ such that $X_1,Y$ 
are linearly independent on $B$, Lemma \ref{lemmalargestset} yields a largest such set 
$B_2$ with a corresponding $X_2 \in C$. If there exists no $B \in {\cal F}_+$ and $Y \in C$ such
that $X_1, X_2, Y$ are linearly independent on $B$, then 
$\lin_{B_2}(C) = \lin_{B_2} \crl{X_1,X_2}$ and one can set $k=2$,
$A_0 = B^1_c$, $A_1 = B_1 \setminus B_2$ and $A_2 = B_2$.
Otherwise, one continues like this until there is no $B \in {\cal F}_+$ and $Y \in C$ 
such that $X_1, \dots, X_k,Y$ are linearly independent on $B$. Such a $k$ must exist and $k \le d$.
Otherwise one would have $X_1, \dots, X_{d+1} \in C$ that are linearly independent on
some $B \in {\cal F}_+$, a contradiction to Corollary \ref{corkl}. One sets
$A_0 = B^1_c$, $A_1 = B_1 \setminus B_2, \dots,$ $A_{k-1} = B_{k-1} \setminus B_k$, $A_k = B_k$.
\end{proof}

\begin{corollary} \label{coraffsigma}
Let $C$ be a non-empty $\sigma$-stable subset of $(L^0)^d$ and $A \in {\cal F}$.
Then $\aff_A(C)$ and $\lin_A(C)$ are again $\sigma$-stable.
\end{corollary}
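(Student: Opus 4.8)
The plan is to reduce the $\sigma$-stability of $\aff_A(C)$ and $\lin_A(C)$ to the finite-generation statement of Theorem~\ref{thmspan}, applied not to $C$ itself but to a translate. First I would dispose of the trivial cases: if $\mu[A] = 0$ then both sets equal $\crl{0}$ and there is nothing to prove, and if $C = \crl{X}$ is a singleton then by Remark~\ref{Rem1} the sets $\aff_A(C)$ and $\lin_A(C)$ are $\sigma$-stable already. So I may assume $\mu[A] > 0$ and pick some $X_0 \in C$; since $1_A(L^0)^d$ is what we work in, replace $C$ by $1_A C$, which is again $\sigma$-stable, and note $\aff_A(C) = \aff(1_A C)$, $\lin_A(C) = \lin(1_A C)$, so it suffices to treat the case $A = \Omega$.

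Next I would handle $\aff(C)$ by passing to the linear case. Fix $X_0 \in C$ and set $D := C - X_0 = \crl{Y - X_0 : Y \in C}$. Then $D$ is $\sigma$-stable (it is the image of $C$ under the $\sigma$-stability-preserving map $Y \mapsto Y - X_0$), and one checks directly from the formulas in Remark~\ref{Rem1} that $\aff(C) = X_0 + \lin(D)$. Since translation by $X_0$ preserves $\sigma$-stability, it is enough to prove that $\lin(D)$ is $\sigma$-stable for an arbitrary non-empty $\sigma$-stable $D$; this simultaneously covers the $\lin_A(C)$ claim. Now apply Theorem~\ref{thmspan} to $D$: either $D = \crl{0}$, in which case $\lin(D) = \crl{0}$ is $\sigma$-stable, or we obtain $k \in \crl{1,\dots,d}$, pairwise disjoint $A_0,\dots,A_k$ with $\bigcup_i A_i = \Omega$, and $X_1,\dots,X_k \in D$ such that $1_{A_0} D = \crl{0}$ and, on each $A_i$ with $\mu[A_i] > 0$, the vectors $X_1,\dots,X_i$ form a basis of $\lin(D)$. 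The key point is then that $\lin(D) = \sst\brak{\bigcup_{i=1}^k 1_{A_i}\lin\crl{X_1,\dots,X_i} \cup \crl{0}}$ — that is, $\lin(D)$ is pieced together from the finitely generated pieces $\lin_{A_i}\crl{X_1,\dots,X_i}$, each of which is $\sigma$-stable by Remark~\ref{Rem1}. A $\sigma$-stable union of $\sigma$-stable sets over a finite measurable partition is $\sigma$-stable, which finishes the argument.

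The main obstacle is verifying that identity $1_{A_i}\lin(D) = \lin_{A_i}\crl{X_1,\dots,X_i}$ and, more precisely, that gluing these pieces across the partition reproduces all of $\lin(D)$ and nothing more. The inclusion $\lin_{A_i}\crl{X_1,\dots,X_i} \subseteq 1_{A_i}\lin(D)$ is immediate since $X_1,\dots,X_i \in D$; the reverse is exactly the basis property (iii) from Theorem~\ref{thmspan}, together with $1_{A_0}\lin(D) = \crl{0}$ coming from $1_{A_0} D = \crl{0}$. For the gluing: any $Z \in \lin(D)$ satisfies $1_{A_i} Z \in \lin_{A_i}\crl{X_1,\dots,X_i}$ for each $i$, hence $Z = \sum_{i=0}^k 1_{A_i} Z$ lies in the $\sigma$-stable hull of the pieces; conversely every such $\sigma$-stable combination lies in $\lin(D)$ because $\lin(D)$ is itself $\sigma$-stable — but wait, that is what we are trying to prove, so instead I would argue that each piece $\lin_{A_i}\crl{X_1,\dots,X_i}$ consists of $L^0$-linear combinations of elements of $D$ supported on $A_i$, hence lies in $\lin(D)$, and a countable measurable recombination $\sum_n 1_{B_n} Z_n$ of such elements is again an $L^0$-linear combination of elements of $D$ (absorb the indicators $1_{B_n}$ into the coefficients and use that only finitely many generators $X_1,\dots,X_k$ occur in total), hence lies in $\lin(D)$. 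Thus $\lin(D)$ equals the $\sigma$-stable hull of a finite measurable union of $\sigma$-stable sets, and is therefore $\sigma$-stable.
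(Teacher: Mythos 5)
Your proposal is correct and follows essentially the same route as the paper: apply Theorem~\ref{thmspan} to obtain a partition $A_0,\dots,A_k$ on which $\lin_A(C)$ is finitely generated, deduce $\sigma$-stability of $\lin_A(C)$ from that (the paper leaves this ``easily verified'' step implicit, and your absorb-the-indicators-into-the-coefficients argument is exactly the right way to fill it in), and then handle $\aff_A(C)$ via the translation identity $\aff_A(C)-X=\lin_A(C-X)$. No gaps.
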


\begin{proof}
If $1_A C = \crl{0}$, then $\aff_A(C) = \lin_A(C) = \crl{0}$, and the corollary is clear.
Otherwise, one obtains from Theorem \ref{thmspan} that there exists a $k \in \crl{1, \dots, d}$, disjoint 
sets $A_0, \ldots, A_k \in{\cal F}$ and $X_1, \ldots, X_k \in C$ such that 
$\bigcup_{i=0}^k A_i = A$, $1_{A_0} C = \crl{0}$ and for all
$i \in \crl{1, \dots, k}$ satisfying $\mu[A_i] > 0$, $X_1, \dots, X_i$ is a basis of $\lin_A(C)$ on $A_i$.
Now it can easily be verified that $\lin_A(C)$ is $\sigma$-stable. To see that $\aff_A(C)$ is 
$\sigma$-stable, one picks an $X \in 1_AC$. Then $\aff_A(C) - X = \lin_A(C-X)$ is $\sigma$-stable.
So $\aff_A(C)$ is $\sigma$-stable too.
\end{proof}

\begin{definition}
The orthogonal complement of a non-empty subset $C$ of $(L^0)^d$ is given by
$$
C^{\perp} := \crl{X \in (L^0)^d : \ang{X,Y} = 0 \mbox{ for all } Y \in C}.
$$
\end{definition}

It is clear that $C^{\perp}$ is an $L^0$-linear subset of $(L^0)^d$ satisfying 
$$
C \cap C^{\perp} \subseteq \crl{0} \quad \mbox{and} \quad C \subseteq C^{\perp \perp}.
$$
As a consequence of Theorem \ref{thmspan}, one obtains the following corollary. 

\begin{corollary} \label{corspan}
Let $C$ be a non-empty $\sigma$-stable $L^0$-linear subset of $(L^0)^d$.
Then there exist unique pairwise disjoint sets $A_0, \dots, A_d \in {\cal F}$ satisfying 
$\bigcup_{i=0}^d A_i = \Omega$ and an orthonormal 
basis $X_1, \ldots, X_d$ of $(L^0)^d$ on $\Omega$ such that 
$1_{A_0} C = \crl{0}$, $1_{A_d} C = 1_{A_d} (L^0)^d$ and
$$1_{A_i} C = \lin_{A_i} \{X_1,\ldots,X_i\}, \quad
1_{A_i} C^{\perp} = \lin_{A_i} \crl{X_{i+1}, \dots, X_d} \quad 
\mbox{for } 1 \le i \le d-1.
$$
In particular, $C + C^{\perp} = (L^0)^d$, $C \cap C^{\perp} = \crl{0}$ and $C = C^{\perp \perp}$.
\end{corollary}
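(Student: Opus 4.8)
The plan is to build the orthonormal basis and the decomposition sets by applying Theorem \ref{thmspan} twice — once to $C$ and once to $C^\perp$ — and then stitching the pieces together with $\sigma$-stability. First I would invoke Theorem \ref{thmspan} for $C$ (which is $\sigma$-stable and, being $L^0$-linear, either equals $\{0\}$ or contains an $X\neq 0$): in the nontrivial case this produces a number $k\in\{1,\dots,d\}$, pairwise disjoint $A_0',\dots,A_k'$ with union $\Omega$, $1_{A_0'}C=\{0\}$, and vectors $Z_1,\dots,Z_k\in C$ so that $Z_1,\dots,Z_i$ is a basis of $\lin(C)=C$ on each $A_i'$ with $\mu[A_i']>0$. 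I would then Gram--Schmidt these on each such $A_i'$, using $\sigma$-stability of $C$ to glue the piecewise-defined orthonormalized vectors into genuine elements of $(L^0)^d$; the key point is that on $A_i'$ the norms $\N{\cdot}$ of the successive Gram--Schmidt residuals are invertible in $1_{A_i'}L^0$ precisely because the $Z$'s are linearly independent on $A_i'$. This yields an orthonormal family $X_1,\dots,X_d$ that is an orthonormal basis of $C$ on the set where $\dim C = d$, etc. By Corollary \ref{coraffsigma}, $C^\perp$ is $\sigma$-stable (it is $L^0$-linear, and one checks $\sigma$-stability directly from the definition of $C^\perp$), so I can run the same machinery on $C^\perp$.

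The crux is to show that on each ``layer'' $A_i$ where $C$ has rank $i$, the orthogonal complement $C^\perp$ has rank $d-i$, and that the two orthonormal families can be chosen compatibly so that $X_1,\dots,X_i$ span $1_{A_i}C$ and $X_{i+1},\dots,X_d$ span $1_{A_i}C^\perp$. I would proceed layerwise: fix $i$ and work inside $1_{A_i}(L^0)^d$ with the basis $X_1,\dots,X_i$ of $1_{A_i}C$ already in hand; extend it to an orthonormal basis $X_1,\dots,X_i,X_{i+1},\dots,X_d$ of $1_{A_i}(L^0)^d$ by completing via Gram--Schmidt against $1_{A_i}e_1,\dots,1_{A_i}e_d$ (discarding, at each step, the residuals whose norm vanishes — this discarding is again a $\sigma$-stable pasting along a partition of $A_i$ according to which $e_j$ first has nonzero residual). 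Then $1_{A_i}C^\perp = \lin_{A_i}\{X_{i+1},\dots,X_d\}$: the inclusion $\supseteq$ is immediate from orthonormality, and $\subseteq$ follows because any $Y\in C^\perp$, written in the basis as $\sum_{j}\ang{1_{A_i}Y,X_j}X_j$, must have $\ang{1_{A_i}Y,X_j}=0$ for $j\le i$ since $X_j\in C$. The different $A_i$'s carry different values of $i$, so these local bases must be glued into a single global orthonormal basis $X_1,\dots,X_d$ on all of $\Omega$; here one uses $\sigma$-stability one last time, defining $X_j$ on each $A_i$ (for $i\ge j$, and arbitrarily on $A_0,\dots,A_{j-1}$, e.g. as $e_j$ completed appropriately) and pasting.

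Finally the three ``in particular'' claims fall out immediately: $C+C^\perp=(L^0)^d$ because on each $A_i$ the spans $\lin_{A_i}\{X_1,\dots,X_i\}$ and $\lin_{A_i}\{X_{i+1},\dots,X_d\}$ together give $1_{A_i}(L^0)^d$ (and the extreme cases $A_0,A_d$ are trivial); $C\cap C^\perp=\{0\}$ was already noted before the corollary; and $C=C^{\perp\perp}$ follows since $C\subseteq C^{\perp\perp}$ always, while applying the decomposition to $C^\perp$ in place of $C$ shows $C^\perp$ has rank $d-i$ on $A_i$, hence $C^{\perp\perp}$ has rank $i$ on $A_i$ and contains $C$, forcing equality. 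Uniqueness of the $A_i$ is inherited from the uniqueness in Theorem \ref{thmspan} together with Corollary \ref{corkl}. I expect the main obstacle to be purely bookkeeping: carefully managing the nested $\sigma$-stable pastings (for Gram--Schmidt, for discarding zero-norm residuals, and for gluing the layerwise bases) so that the resulting $X_1,\dots,X_d$ are simultaneously orthonormal on all of $\Omega$ and adapted to every layer $A_i$ — there is no deep difficulty, but the argument must be organized so these gluings do not conflict.
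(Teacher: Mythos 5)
Your proposal is correct and follows essentially the same route as the paper: apply Theorem \ref{thmspan} to $C$, orthonormalize the resulting nested bases by Gram--Schmidt (using that the residual norms are invertible on the relevant sets by linear independence), extend layer by layer to a full orthonormal basis of $1_{A_i}(L^0)^d$, identify $1_{A_i}C^{\perp}$ as the span of the remaining vectors by expanding any $Y\in C^{\perp}$ in that basis, and glue across layers by $\sigma$-stability. The only cosmetic difference is that you complete the basis against $e_1,\dots,e_d$ with explicit discarding of zero residuals, whereas the paper obtains the completing vectors from Lemma \ref{lemmalargestset} and Corollary \ref{corkl}; also note that $\sigma$-stability of $C^{\perp}$ is checked directly from its definition rather than via Corollary \ref{coraffsigma}, as your parenthetical already indicates.
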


\begin{proof}
The uniqueness of the sets $A_1, \dots, A_d$ follows from Corollary \ref{corkl}, and in the 
special case $C = \crl{0}$, one can choose $A_0 = \Omega$, 
$A_i = \emptyset$, $X_i = e_i$, $i=1, \dots, d$.

If $C$ is different from $\crl{0}$, it follows from Theorem \ref{thmspan} that
there exist a unique number $k \in \crl{1, \dots, d}$,
unique pairwise disjoint sets $A_0, \dots, A_k \in {\cal F}$ and
$Y_1, \ldots, Y_k \in C$ such that $\bigcup_{i=0}^k A_i = \Omega$, $\mu[A_k] > 0$,
$1_{A_0} C = \crl{0}$ and for all $i \in \crl{1, \dots, k}$
satisfying $\mu[A_i] > 0$, $Y_1, \dots, Y_i$ is a basis of $C$ on $A_i$. Let us define
$$
U_1 := 1_{A_1 \cup \dots \cup A_k} \frac{Y_1}{\N{Y_1}} \in C
$$
and
$$
Z_i := Y_i - \sum_{j=1}^{i-1} \ang{Y_i,U_j} U_j, \quad U_i = 1_{A_i \cup \dots \cup A_k} \frac{Z_i}{\N{Z_i}}
\quad \mbox{for } 2 \le i \le k.
$$
Then for every $i \in \crl{1, \dots, k}$ satisfying $\mu[A_i] > 0$, $U_1, \dots, U_i$ is an orthonormal basis of $C$ 
on $A_i$. If $k =d$, one obtains from Corollary \ref{corkl} that
$1_{A_d} C = \lin_{A_d} \crl{U_1, \dots, U_d} = 1_{A_d} (L^0)^d$.
If $k < d$, we set $A_{k+1} = \dots = A_d = \emptyset$, and 
$1_{A_d} C = 1_{A_d} (L^0)^d$ holds trivially. By Corollary \ref{corkl}
and Lemma \ref{lemmalargestset}, there exist $V_i \in C$, $i=1, \dots, d$ such that 
$$
1_{A_0} (L^0)^d = \lin_{A_0} \crl{V_1, \dots, V_d}
$$
and
$$
1_{A_i} (L^0)^d = \lin_{A_i} \crl{U_1, \dots, U_i, V_{i+1} \dots, V_d} \quad \mbox{for all } i=1, \dots, d-1.
$$
Set 
$$
X_1 := 1_{A_1 \cup \dots \cup A_d} U_1 + 1_{A_0} \frac{V_1}{\N{V_1}}
$$
and
$$
W_i := V_i - \sum_{j=1}^{i-1} \ang{V_i,X_j} X_j, \quad 
X_i = 1_{A_i \cup \dots \cup A_d} U_i + 1_{A_0 \cup \dots \cup A_{i-1}} \frac{W_i}{\N{W_i}}
\quad \mbox{for } 2 \le i \le d.
$$
Then $X_1, \dots, X_d$ are orthonormal on $\Omega$ such that 
$$1_{A_i} C = \lin_{A_i} \{X_1,\ldots,X_i\}, \quad
1_{A_i} C^{\perp} = \lin_{A_i} \crl{X_{i+1}, \dots, X_d} \quad 
\mbox{for } 1 \le i \le d-1.
$$
It is clear that $C + C^{\perp} = (L^0)^d$, $C \cap C^{\perp} = \crl{0}$ and $C = C^{\perp \perp}$.
\end{proof}

\begin{corollary} \label{cordec}
Let $C$ be a non-empty $\sigma$-stable $L^0$-linear subset of $(L^0)^d$.
Then every $X \in (L^0)^d$ has a unique decomposition $X = Y+Z$ for 
$Y \in C$, $Z \in C^{\perp}$, and $\N{Z} \le \N{X - V}$ for every $V \in C$.
\end{corollary}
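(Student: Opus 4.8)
The plan is to read off the decomposition from the orthonormal basis furnished by Corollary \ref{corspan}, applied to $C$. That corollary gives pairwise disjoint sets $A_0,\dots,A_d\in{\cal F}$ with $\bigcup_{i=0}^d A_i=\Omega$ and an orthonormal basis $X_1,\dots,X_d$ of $(L^0)^d$ on $\Omega$ such that $1_{A_0}C=\crl{0}$, $1_{A_d}C=1_{A_d}(L^0)^d$, and $1_{A_i}C=\lin_{A_i}\crl{X_1,\dots,X_i}$, $1_{A_i}C^\perp=\lin_{A_i}\crl{X_{i+1},\dots,X_d}$ for $1\le i\le d-1$; using $C+C^\perp=(L^0)^d$ and $C\cap C^\perp=\crl{0}$ (also from that corollary) one gets $1_{A_0}C^\perp=1_{A_0}(L^0)^d$ and $1_{A_d}C^\perp=\crl{0}$ as well, so with the convention that an empty span is $\crl{0}$ the two span formulas hold for every $i\in\crl{0,\dots,d}$. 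Since $X_1,\dots,X_d$ is orthonormal and spans $(L^0)^d$, taking inner products with the $X_j$ shows $X=\sum_{j=1}^d\ang{X,X_j}X_j$ for every $X\in(L^0)^d$. I would then define
$$
Y:=\sum_{i=0}^d 1_{A_i}\sum_{j=1}^{i}\ang{X,X_j}X_j,\qquad
Z:=\sum_{i=0}^d 1_{A_i}\sum_{j=i+1}^{d}\ang{X,X_j}X_j
$$
(empty sums being $0$), so that $Y+Z=\sum_{i=0}^d 1_{A_i}X=X$.

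To see $Y\in C$: for each $i$ one has $1_{A_i}Y=1_{A_i}\sum_{j=1}^{i}\ang{X,X_j}X_j\in\lin_{A_i}\crl{X_1,\dots,X_i}=1_{A_i}C$, hence $1_{A_i}Y=1_{A_i}Y^{(i)}$ for some $Y^{(i)}\in C$; as the $A_i$ are pairwise disjoint with union $\Omega$ and $C$ is $\sigma$-stable, $Y=\sum_{i=0}^d 1_{A_i}Y^{(i)}\in C$. For $Z\in C^\perp$ I would verify the defining property directly rather than argue $\sigma$-stability of $C^\perp$: for every $V\in C$,
$$
\ang{Z,V}=\sum_{i=0}^d 1_{A_i}\ang{Z,V}=\sum_{i=0}^d\ang{1_{A_i}Z,1_{A_i}V}=0,
$$
since $1_{A_i}Z\in\lin_{A_i}\crl{X_{i+1},\dots,X_d}$ and $1_{A_i}V\in 1_{A_i}C=\lin_{A_i}\crl{X_1,\dots,X_i}$, and these are orthogonal on $A_i$ by orthonormality of the $X_j$.

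Uniqueness follows at once from $C\cap C^\perp=\crl{0}$: if $X=Y+Z=Y'+Z'$ with $Y,Y'\in C$ and $Z,Z'\in C^\perp$, then $Y-Y'=Z'-Z\in C\cap C^\perp=\crl{0}$. For the minimality claim, fix $V\in C$ and write $X-V=(Y-V)+Z$ with $Y-V\in C$ and $Z\in C^\perp$; then $\ang{Y-V,Z}=0$, so
$$
\N{X-V}^2=\N{Y-V}^2+2\ang{Y-V,Z}+\N{Z}^2=\N{Y-V}^2+\N{Z}^2\ge\N{Z}^2,
$$
and taking square roots of these elements of $L^0_+$ gives $\N{Z}\le\N{X-V}$.

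The substantive content is entirely in Corollary \ref{corspan}; the only point that needs care here is the bookkeeping at the boundary indices $i=0$ and $i=d$ together with the empty-span/empty-sum conventions, after which the verification is routine.
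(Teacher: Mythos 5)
Your proposal is correct and follows essentially the same route as the paper: the paper derives existence and uniqueness of the decomposition directly from Corollary \ref{corspan} (which already asserts $C+C^{\perp}=(L^0)^d$ and $C\cap C^{\perp}=\crl{0}$) and establishes $\N{Z}\le\N{X-V}$ by the same Pythagorean identity $\N{X-V}^2=\N{Y-V}^2+\N{Z}^2$. You merely unpack the citation of Corollary \ref{corspan} into an explicit coordinate construction, which is a sound but not substantively different argument.
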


\begin{proof}
That $X$ has a unique decomposition $X = Y + Z$, $Y \in C$, $Z \in C^{\perp}$ 
is a consequence of Corollary \ref{corspan}. Moreover, if $V \in C$, then
$$
\N{Z}^2 \le \N{Z}^2 + \N{Y-V}^2 = \N{Z+Y-V}^2 = \N{X-V}^2.
$$
\end{proof}

\setcounter{equation}{0}
\section{Converging sequences, sequential closures and sequential continuity}
\label{sec:limits}

\begin{definition}
We call a subset $C$ of $(L^0)^d$ sequentially closed if it contains every $X \in (L^0)^d$ that is an 
a.e. limit of a sequence $(X_n)_{n \in \mathbb{N}}$ in $C$.
For an arbitrary subset $C$ of $(L^0)^d$ and $A \in {\cal F}_+$, we denote by $\lim_A(C)$ the set
consisting of all a.e. limits of sequences in $1_A C$ and by
$\cl_A(C)$ the smallest sequentially closed subset of $1_A (L^0)^d$ containing $1_A C$.
In the special case $A = \Omega$, we just write $\lim(C)$ and $\cl(C)$, respectively.
\end{definition}

\begin{proposition} \label{proplimC}
For all subsets $C$ of $(L^0)^d$ and $A \in {\cal F}_+$ one has
$\lim_A(C) = \cl_A(C)$.
\end{proposition}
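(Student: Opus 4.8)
The plan is to prove the two inclusions separately. The inclusion $\lim_A(C) \subseteq \cl_A(C)$ is immediate: $\cl_A(C)$ is sequentially closed and contains $1_A C$, so it contains every a.e.\ limit of a sequence in $1_A C$, which is exactly $\lim_A(C)$. The real content is the reverse inclusion $\cl_A(C) \subseteq \lim_A(C)$, and since $\cl_A(C)$ is by definition the \emph{smallest} sequentially closed set containing $1_A C$, it suffices to show that $\lim_A(C)$ is itself sequentially closed and contains $1_A C$. Containment of $1_A C$ is trivial (take constant sequences). So the heart of the matter is: \emph{$\lim_A(C)$ is sequentially closed}.

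To prove that, let $X$ be an a.e.\ limit of a sequence $(X_n)$ with each $X_n \in \lim_A(C)$; I must produce a single sequence in $1_A C$ converging a.e.\ to $X$. For each $n$, pick a sequence $(Y^n_m)_{m}$ in $1_A C$ with $Y^n_m \to X_n$ a.e.\ as $m \to \infty$. The naive idea is a diagonal argument, choosing $m(n)$ so that $Y^n_{m(n)} \to X$; but a.e.\ convergence is not metrizable in general, so one cannot simply pick $m(n)$ making $\N{Y^n_{m(n)} - X_n}$ small ``uniformly''. The standard fix, which I expect to be the main obstacle, is to pass to a suitable reformulation of a.e.\ convergence: on the $\sigma$-finite space $(\Omega,\mathcal F,\mu)$ one can choose an equivalent probability measure $\p \sim \mu$, and then a.e.\ convergence of a sequence that is eventually dominated — or more robustly, convergence in the metric $d(U,V) = \EP{\min(\N{U-V},1)}$ combined with extraction of a.e.-convergent subsequences — lets one run a genuine diagonal extraction. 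Concretely: for each $n$ choose $m(n)$ with $\EP{\min(\N{Y^n_{m(n)} - X_n},1)} \le 2^{-n}$; then $Z_n := Y^n_{m(n)} \in 1_A C$ satisfies $Z_n - X_n \to 0$ in this metric, hence (since $X_n \to X$ a.e., so also in the metric) $Z_n \to X$ in the metric, so a subsequence of $(Z_n)$ converges a.e.\ to $X$. That subsequence lies in $1_A C$ and witnesses $X \in \lim_A(C)$.

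One technical point to handle carefully: the metric argument gives an a.e.-convergent \emph{subsequence}, which is all that is needed, so there is no circularity. Alternatively — and perhaps this is what the authors do to stay closer to the measure-free spirit of the paper — one can invoke $\sigma$-stability-type gluing: split $\Omega$ according to where $\N{X_n - X}$ first drops below $1/j$ for each $j$, and glue the corresponding $Y^n_m$'s together using a measurable index $N \in \mathbb{N}(\mathcal F)$; but since $1_A C$ need not be $\sigma$-stable, one should instead glue at the level of finitely many pieces and pass to a limit, which again reduces to the diagonal extraction above. Either way, the key step is replacing the non-metrizable notion of a.e.\ convergence by an equivalent metric (or by countably many ``first time below $1/j$'' stopping sets) so that a diagonal choice of indices becomes legitimate; the rest is bookkeeping.
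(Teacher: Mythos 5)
Your proposal is correct and follows essentially the same route as the paper: both reduce the claim to showing that $\lim_A(C)$ is sequentially closed, and both legitimize the diagonal choice of indices by trading a.e.\ convergence for convergence in measure (the paper works on an exhausting sequence of finite-measure sets, picks $m_n$ so that $\mu[A_n\cap\{|Y_{m_n}-X_n|>1/n\}]\le 2^{-n}$, and applies Borel--Cantelli to get a.e.\ convergence of the diagonal sequence outright, while you metrize convergence in probability under an equivalent probability measure and extract an a.e.-convergent subsequence at the end). This difference in the finishing step is cosmetic and does not affect correctness.
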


\begin{proof}
It is clear that $\lim_A(C) \subseteq \cl_A(C)$. To show that the two sets are equal,
it is enough to prove that $\lim_A(C)$ is sequentially closed. So
let $(X_n)_{n \in \mathbb{N}}$ be a sequence in $\lim_A(C)$ that converges a.e.
to some $X \in 1_A(L^0)^d$. Since $(\Omega, {\cal F}, \mu)$ is $\sigma$-finite, there exists an increasing
sequence $A_n$, $n \in \mathbb{N}$, of measurable sets such that
$\bigcup_n A_n = A$ and $\mu[A_n] < + \infty$.
For every $n$ there exists a sequence $(Y_m)_{m \in \mathbb{N}}$ in $1_A C$
converging a.e. to $X_n$. Therefore,
$$
\mu[A_n \cap \crl{|Y_m - X_n| > 1/n}] \to 0 \quad \mbox{for } m \to \infty,
$$
and one can choose $m_n \in \mathbb{N}$ such that
$$
\mu[B_n] \le 2^{-n}, \quad \mbox{where } B_n = A_n \cap \crl{|Y_{m_n} - X_n| > 1/n}.
$$
It follows from the Borel--Cantelli lemma that
$\mu \edg{\bigcap_{k \in \mathbb{N}} \bigcup_{n \ge k} B_n} = 0$, which implies
$Y_{m_n} \to X$ a.e. for $n \to \infty$. So $X \in \lim_A(C)$, and the proof is complete.
\end{proof}

\begin{corollary} \label{corAcl}
If $C$ is a stable subset of $(L^0)^d$ and $A \in {\cal F}_+$, then
$$
{\rm lim}_A(C) = 1_A \lim(C) = \cl_A(C) = 1_A \cl(C).
$$
In particular, if $C$ is stable and sequentially closed, then so is $1_A C$.
\end{corollary}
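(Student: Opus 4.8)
The whole statement reduces to the single identity $\lim_A(C) = 1_A \lim(C)$. Indeed, Proposition~\ref{proplimC} applied to the pairs $(C,A)$ and $(C,\Omega)$ gives $\lim_A(C) = \cl_A(C)$ and $\lim(C) = \cl(C)$, and applying the set operation $1_A(\cdot)$ to the second equality yields $1_A \lim(C) = 1_A \cl(C)$; so once the displayed identity is established, the full chain of four equalities follows. The case $C = \emptyset$ is trivial, since then all four sets are empty, so I assume $C \neq \emptyset$ and fix a base point $Z_0 \in C$.

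The inclusion $1_A \lim(C) \subseteq \lim_A(C)$ requires no hypothesis on $C$: if $V_n \to V$ a.e.\ with $V_n \in C$, then $1_A V_n \in 1_A C$ and $1_A V_n \to 1_A V$ a.e., so $1_A V \in \lim_A(C)$.

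The reverse inclusion is the only place stability enters, and it is the crux of the argument. Given $X \in \lim_A(C)$, choose $Z_n \in C$ with $1_A Z_n \to X$ a.e.; since every term of this sequence vanishes on $A^c$, so does the limit, i.e.\ $X = 1_A X$. The idea is to upgrade the sequence $(1_A Z_n)$, which lives only in $1_A C$, to a sequence in $C$ with the same limit over $A$: by stability, $V_n := 1_A Z_n + 1_{A^c} Z_0 \in C$, and $V_n \to 1_A X + 1_{A^c} Z_0 =: V$ a.e., so that $V \in \lim(C)$ and $1_A V = 1_A X = X$. Hence $X \in 1_A \lim(C)$. Everything else is bookkeeping; the one point to be careful about is exactly this gluing, namely producing an honest element of $\lim(C)$ out of data defined only over $A$ --- which is precisely what stability together with the fixed base point $Z_0$ provides.

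For the final assertion, suppose $C$ is in addition sequentially closed, so that $\cl(C) = C$. Then $1_A C = 1_A \cl(C) = \cl_A(C)$, which is sequentially closed by the definition of $\cl_A$; and $1_A C$ is stable because $1_B (1_A X) + 1_{B^c}(1_A Y) = 1_A(1_B X + 1_{B^c} Y) \in 1_A C$ for all $X, Y \in C$ and $B \in {\cal F}$.
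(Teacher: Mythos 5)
Your proof is correct and follows essentially the same route as the paper: the paper likewise reduces everything to the identity $\lim_A(C) = 1_A\lim(C)$ (which it attributes to stability without spelling out the gluing with a base point) and then invokes Proposition~\ref{proplimC} for the remaining equalities. You have simply supplied the details the paper leaves implicit.
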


\begin{proof}
${\rm lim}_A(C) = 1_A \lim(C)$ is a consequence of the
stability of $C$. Moreover, it follows from Proposition \ref{proplimC}
that ${\rm lim}_A(C) = \cl_A(C)$ and $\lim(C) = \cl(C)$.
This proves the corollary.
\end{proof}

\begin{corollary} \label{corclstable}
If $C$ is a stable subset of $(L^0)^d$ and $A \in {\cal F}_+$, then $\cl_A(C)$ is $\sigma$-stable.
Moreover, if $C$ is $L^0$-convex, an $L^0$-convex cone, $L^0$-affine or $L^0$-linear, then so is $\cl_A(C)$.
\end{corollary}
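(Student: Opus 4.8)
The plan is to deal with $\sigma$-stability first and then handle the algebraic properties one at a time, each by the same template: pass from two elements of $\cl_A(C)$ to sequences in $1_AC$, apply the relevant operation to the sequences (using that $C$ has the property in question), and invoke Proposition \ref{proplimC} to conclude the resulting a.e.\ limit lies in $\cl_A(C)=\lim_A(C)$.

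For $\sigma$-stability: let $(X_n)_{n\in\mathbb N}$ be a sequence in $\cl_A(C)$ and $(A_n)_{n\in\mathbb N}$ pairwise disjoint with $\bigcup_n A_n=A$; I must show $\sum_n 1_{A_n}X_n\in\cl_A(C)$. By Proposition \ref{proplimC} each $X_n$ is an a.e.\ limit of a sequence $(Y^n_m)_{m}$ in $1_AC$. Since $C$ is stable, $1_AC$ is stable, so for each $k$ the finite sum $\sum_{n=1}^k 1_{A_n}Y^n_{m}+1_{(A_1\cup\cdots\cup A_k)^c\cap A}Y^1_m$ lies in $1_AC$; letting $m\to\infty$ shows $\sum_{n=1}^k 1_{A_n}X_n + 1_{(A_1\cup\cdots\cup A_k)^c\cap A}X_1\in\lim_A(C)=\cl_A(C)$. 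Finally $\sum_{n=1}^k 1_{A_n}X_n + 1_{(A_1\cup\cdots\cup A_k)^c\cap A}X_1 \to \sum_n 1_{A_n}X_n$ a.e.\ as $k\to\infty$ (the difference is supported on $\bigcup_{n>k}A_n$, whose measure on each finite-measure piece of $A$ tends to $0$), so the limit is in $\cl_A(C)$ again by Proposition \ref{proplimC}.

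For the algebraic properties: suppose $C$ is $L^0$-convex, take $X,Y\in\cl_A(C)$ and $\lambda\in L^0$ with $0\le\lambda\le1$, and pick sequences $(X_n),(Y_n)$ in $1_AC$ converging a.e.\ to $X,Y$. Since $1_AC$ is $L^0$-convex, $\lambda X_n+(1-\lambda)Y_n\in 1_AC$, and this sequence converges a.e.\ to $\lambda X+(1-\lambda)Y$, which is therefore in $\lim_A(C)=\cl_A(C)$. The cases of an $L^0$-convex cone (additionally $\lambda X_n\in 1_AC$ for $\lambda\in L^0_{++}$, and $\lambda X_n\to\lambda X$ a.e.), $L^0$-affine ($\lambda\in L^0$ arbitrary), and $L^0$-linear ($\lambda X_n+Y_n\in 1_AC$, converging to $\lambda X+Y$) are handled identically; here one uses that $1_AC$ inherits each property from $C$, which is immediate from the definitions.

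The only mildly delicate point is the passage to the countable sum in the $\sigma$-stability argument: a single pass through Proposition \ref{proplimC} only gives closure under finite stable combinations, so one needs the extra a.e.\ convergence $\sum_{n=1}^k 1_{A_n}X_n\to\sum_n 1_{A_n}X_n$ together with a second application of Proposition \ref{proplimC}; the $\sigma$-finiteness of $(\Omega,\mathcal F,\mu)$ guarantees this convergence. Everything else is routine sequence manipulation.
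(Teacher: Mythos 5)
Your proposal is correct and follows essentially the same route as the paper: establish that $\cl_A(C)=\lim_A(C)$ is stable (and inherits each algebraic property) by applying the relevant operation to approximating sequences in $1_AC$, then upgrade finite stability to $\sigma$-stability using sequential closedness and the a.e.\ convergence of the partial sums. The only cosmetic difference is that you invoke $\sigma$-finiteness for the final convergence, which is not actually needed since the partial sums are eventually constant at a.e.\ point of $A$.
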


\begin{proof}
By Proposition \ref{proplimC}, $\cl_A(C)$ is equal to $\lim_A(C)$. So for all $X,Y \in \cl_A(C)$
there exist sequences $(X_n)_{n \in \mathbb{N}}$ and $(Y_n)_{n \in \mathbb{N}}$
in $1_A C$ such that $X_n \to X$ a.e. and $Y_n \to Y$ a.e. Since for all $B \in {\cal F}$,
$1_B X_n + 1_{B^c} Y_n \in 1_A C$ and $1_B X_n + 1_{B^c} Y_n \to 1_B X + 1_{B^c} Y$ a.e.,
one obtains that $1_B X + 1_{B^c} Y$ belongs to $\lim_A(C) = \cl_A(C)$. 
This shows that $\cl_A(C)$ is stable. Since it is also sequentially closed, it must be 
$\sigma$-stable. The rest of the corollary follows similarly.
\end{proof}

\begin{proposition} \label{propaffclosed}
Every $\sigma$-stable $L^0$-affine subset $C$ of $(L^0)^d$ is sequentially closed.
\end{proposition}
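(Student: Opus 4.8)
The plan is to reduce to the $L^0$-linear case and then exhibit a retraction of $(L^0)^d$ onto $C$ that is continuous for a.e.\ convergence. If $C$ is empty it is sequentially closed by definition, so assume $C \neq \emptyset$ and fix $X_0 \in C$. Since $C$ is $L^0$-affine, one readily checks that $C - X_0$ is an $L^0$-submodule, and $C - X_0$ is $\sigma$-stable because $C$ is; moreover $C$ is sequentially closed if and only if $C - X_0$ is, since translation by $X_0$ preserves a.e.\ convergence. Hence I may assume from the outset that $C$ is a $\sigma$-stable $L^0$-linear subset of $(L^0)^d$.

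Next I would apply Corollary \ref{corspan}: there are pairwise disjoint $A_0, \dots, A_d \in {\cal F}$ with $\bigcup_{i=0}^d A_i = \Omega$ and an orthonormal basis $X_1, \dots, X_d$ of $(L^0)^d$ on $\Omega$ such that $1_{A_0} C = \crl{0}$, $1_{A_d} C = 1_{A_d}(L^0)^d$, and $1_{A_i} C = \lin_{A_i}\crl{X_1, \dots, X_i}$ for $1 \le i \le d-1$. Define $P : (L^0)^d \to (L^0)^d$ by
$$
PX := \sum_{i=0}^d 1_{A_i} \sum_{j=1}^i \ang{X, X_j} X_j ,
$$
with the empty inner sum (for $i = 0$) read as $0$. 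I would check that $1_{A_i} PX \in 1_{A_i} C$ for every $i$ and every $X$ --- for $0 < i < d$ because it is an $1_{A_i}L^0$-linear combination of $X_1, \dots, X_i$; for $i = d$ because $X_1, \dots, X_d$ is an orthonormal basis, so $\sum_{j=1}^d \ang{X, X_j} X_j = X$; for $i = 0$ trivially --- and then conclude $PX \in C$ using the $\sigma$-stability of $C$ applied to the finite partition $A_0, \dots, A_d$. A short computation with orthonormality also shows $PX = X$ for every $X \in C$, so $P$ is a retraction onto $C$ (and in fact $X - PX \in C^\perp$, i.e.\ $P$ is the orthogonal projection of Corollary \ref{cordec}).

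Finally I would check continuity of $P$ for a.e.\ convergence: if $W_n \to W$ a.e., then each $\ang{W_n, X_j} = \sum_{k=1}^d W_n^k X_j^k$ converges to $\ang{W, X_j}$ a.e.\ (finite sums and products of a.e.\ convergent sequences against the fixed functions $X_j^k \in L^0$), and multiplying by the fixed vectors $X_j$ and indicators $1_{A_i}$ and summing the finitely many terms yields $PW_n \to PW$ a.e. Hence, if $(W_n)$ lies in $C$ and $W_n \to W$ a.e., then $W_n = PW_n \to PW$ a.e., and uniqueness of a.e.\ limits forces $W = PW \in C$; undoing the reduction shows $C$ is sequentially closed. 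I do not expect a real obstacle: the points needing care are the translation reduction, the bookkeeping in checking that $P$ takes values in $C$ (the boundary blocks $A_0, A_d$ behave differently from the middle ones), and the routine fact that finitely many products, sums and multiplications by fixed indicators preserve a.e.\ convergence. If the continuity of $L^0$-linear maps $(L^0)^d \to (L^0)^k$ is already available at this point, the last step is immediate, since $P$ is visibly $L^0$-linear.
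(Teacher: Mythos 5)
Your proof is correct and takes essentially the same route as the paper: the paper likewise reduces to the $L^0$-linear case by translating by a point of $C$, invokes Corollary \ref{corspan} to get the sets $A_i$ and the orthonormal basis $X_1,\dots,X_d$, and passes to the a.e.\ limit in the coordinates $\lambda_n^j=\ang{Y_n,X_j}$, concluding that the limit's coordinates vanish where required so that $Y=\sum_j\lambda^jX_j\in C$. Your projection $P$ is merely a repackaging of that coordinate computation, so there is no substantive difference.
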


\begin{proof}
If $C$ is empty, the corollary is trivial.
Otherwise, choose $X \in C$. Then $D = C-X$ is a $\sigma$-stable $L^0$-linear subset 
of $(L^0)^d$, and the corollary follows if we can show that $D$ is sequentially closed. So let
$(Y_n)_{n \in \mathbb{N}}$ be a sequence in
$D$ converging a.e. to some $Y \in (L^0)^d$. By Corollary \ref{corspan}, there exist
unique pairwise disjoint sets $A_0, \dots, A_d \in {\cal F}$ satisfying 
$\bigcup_{i=0}^d A_i = \Omega$ and an orthonormal 
basis $X_1, \ldots, X_d$ of $(L^0)^d$ on $\Omega$ such that 
$1_{A_0} D = \crl{0}$ and $1_{A_i} D = \lin_{A_i} \{X_1,\ldots,X_i\}$ for $1 \le i \le d$.
Define $\lambda_n$ and $\lambda$ in $(L^0)^d$ by
$\lambda^j_n := \ang{Y_n,X_j}$ and $\lambda^j := \ang{Y,X_j}$.
Since $Y_n \to Y$ a.e., one has $\lambda^j_n \to \lambda^j$ a.e. In particular,
$\lambda^j = 0$ on $A_i$ such that $i < j$.
This shows that $Y = \sum_j \lambda^j X_j \in D$.
\end{proof}

The following example shows that $L^0$-affine subsets of $(L^0)^d$ that are not
 $\sigma$-stable need not be sequentially closed.

\begin{Example}
Let $\Omega = \mathbb{N}$, ${\cal F} = 2^{\mathbb{N}}$ and
$\mu$ the counting measure. Set $X_n = 1_{\crl{n}} e_1$. Then
$$
\lin(X_n: n \in \mathbb{N}) = \crl{\sum_{n=1}^k \lambda_n X_n : k \in \mathbb{N}, \;
\lambda_1, \dots, \lambda_k \in L^0}
$$
is an $L^0$-linear subset of $(L^0)^d$ that is not $\sigma$-stable, and
$Y_k = \sum_{n=1}^k X_n$ is a sequence in $\lin(X_n: n \in \mathbb{N})$ that converges a.e.
to $\sum_{n \in \mathbb{N}} X_n \notin \lin(X_n: n \in \mathbb{N})$. Note that
$\lin(X_n: n \in \mathbb{N})$ is an $L^0$-submodule of $(L^0)^d$ that is not finitely generated.
\end{Example}

The next result is a conditional version of the Bolzano--Weierstrass theorem. It is already known
(see for instance, Lemma 2 in Kabanov and Stricker (2001) or Lemma 1.63 in F\"ollmer and Schied (2004)).
But since it is important to some of our later results, we give a short proof.
To state the result we need the following definition.

\begin{definition}
We call a subset $C$ of $(L^0)^d$ $L^0$-bounded if $\esssup_{X \in C} \N{X} \in L^0$.
\end{definition}


Note that if $(X_n)_{n \in \mathbb{N}}$ is a sequence in $(L^0)^d$ and $N \in \mathbb{N}({\cal F})$,
$X_N$ can be written as
$$
X_N = \sum_{n \in \mathbb{N}} 1_{\crl{N=n}} X_n.
$$
In particular, $X_N$ is in $(L^0)^d$. Moreover, if all $X_n$ belong to a 
$\sigma$-stable subset $C$ of $(L^0)^d$, then $X_N$ is again in $C$.

\begin{theorem} \label{thmBW} {\bf (Conditional version of the Bolzano--Weierstrass theorem)}\\
Let $(X_n)_{n \in \mathbb{N}}$ be an $L^0$-bounded sequence
in $(L^0)^d$. Then there exists an $X \in (L^0)^d$ and a sequence 
$(N_n)_{n \in \mathbb{N}}$ in $\mathbb{N}({\cal F})$ 
such that $N_{n+1} > N_n$ for all $n \in \mathbb{N}$ and $\lim_{n \to \infty} X_{N_n} = X$ a.e.
\end{theorem}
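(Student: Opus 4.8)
The plan is to reduce the conditional statement to the classical one-dimensional case, component by component, via a diagonal/gluing argument. First I would reduce to $d=1$: if the claim holds for scalar $L^0$-bounded sequences, then applying it successively to each of the $d$ coordinate sequences (passing each time to a further subsequence of measurable indices, which is legitimate because $\mathbb{N}({\cal F})$ is closed under composition $N \circ M$, and because an $L^0$-bounded sequence in $(L^0)^d$ has each coordinate sequence $L^0$-bounded) yields a single sequence $(N_n)$ of measurable indices along which all $d$ coordinates converge a.e., hence $X_{N_n}$ converges a.e. in $(L^0)^d$. The strict monotonicity $N_{n+1} > N_n$ can be arranged at each stage, and composition preserves it.

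So assume $d=1$ and $\esssup_n |X_n| =: Z \in L^0_+$. The idea is to mimic the bisection proof of Bolzano--Weierstrass, but carried out simultaneously on all of $\Omega$ using measurable, $\omega$-dependent interval endpoints. Set $a_0 := -Z$, $b_0 := Z$, so that $a_0 \le X_n \le b_0$ for all $n$. Inductively, given measurable functions $a_k \le b_k$ with $c_k := (a_k+b_k)/2$, the set $A_{k} := \{\omega : X_n(\omega) \in [a_k(\omega), c_k(\omega)] \text{ for infinitely many } n\}$ is measurable; one defines $(a_{k+1}, b_{k+1}) := (a_k, c_k)$ on $A_k$ and $(c_k, b_k)$ on $A_k^c$. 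Then $b_k - a_k = 2^{-k}\cdot 2Z \to 0$ a.e., the $a_k$ increase and the $b_k$ decrease, and they converge to a common limit $X \in L^0$. The point is that on a set of full measure, for every $k$ there are infinitely many $n$ with $X_n \in [a_k, b_k]$; the delicate part is to extract from this a single \emph{measurable} index function $N_n$, simultaneously valid for a.e.\ $\omega$, with $N_{n+1} > N_n$ and $|X_{N_n} - X| \le b_n - a_n$.

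The main obstacle is precisely this measurable selection of the indices $N_n$, done without invoking $\omega$-wise choice or the axiom of choice, in keeping with the paper's stated methodology. I would handle it as follows: having the intervals $[a_k,b_k]$, define the measurable set $C_{k,n} := \{\omega : X_n(\omega) \in [a_k(\omega), b_k(\omega)]\}$; for each $k$, $\bigcup_{n > m} C_{k,n} = \Omega$ up to null sets for every $m$ (that is the content of ``infinitely many $n$''), which lets one recursively define $N_k \in \mathbb{N}({\cal F})$ by $N_k := \sum_{n} n\, 1_{D_{k,n}}$ where $D_{k,n}$ is the disjointification of $\{C_{k,n} : n > N_{k-1}\}$ — i.e.\ $N_k$ picks, measurably, the first index $n > N_{k-1}$ with $X_n \in [a_k,b_k]$. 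This forces $N_{k+1} > N_k$ and $|X_{N_k} - X| \le b_k - a_k \to 0$ a.e., giving $X_{N_k} \to X$ a.e. One should double-check the $\sigma$-finiteness is not needed here (it was used in Proposition \ref{proplimC}, but the present argument is purely pointwise in $\omega$ modulo null sets), and that the reduction from $(L^0)^d$ to $d=1$ correctly tracks $L^0$-boundedness of coordinates via $|X_n^i| \le \N{X_n}$. With these pieces in place the theorem follows.
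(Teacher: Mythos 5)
Your proposal is correct. The outer layer --- reducing to $d=1$ by treating the coordinates successively, extracting at each stage a further strictly increasing sequence of measurable indices and composing them via $N^{i}_n := N^{i-1}_{M^i_n}$ --- is exactly what the paper does. Where you diverge is in the scalar step. The paper takes the candidate limit to be $X^1 := \lim_{n} \inf_{m \ge n} X^1_m$, which lies in $L^0$ by $L^0$-boundedness, and then defines $N^1_n(\omega)$ as the first index $m > N^1_{n-1}(\omega)$ with $X^1_m(\omega) \le X^1(\omega) + 1/n$; convergence follows since $X^1_{N^1_n} \ge \inf_{m \ge n} X^1_m$ as well. You instead run a measurable bisection: halve the interval $[-Z,Z]$ according to the measurable event ``infinitely many $X_n$ land in the left half,'' and then select indices measurably inside the shrinking intervals. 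Both arguments are sound and both respect the paper's methodological constraint of avoiding $\omega$-wise choice (your index selection via countable disjointification is the same device the paper uses implicitly in its $\min$ over a measurable condition), and neither needs $\sigma$-finiteness. The paper's liminf route is shorter because the limit candidate comes for free from boundedness, with no induction on interval halves; your bisection is more self-contained in that it transplants the classical proof verbatim, at the cost of having to verify measurability of the endpoints $a_k, b_k$ and the inductive ``infinitely many indices survive'' claim at each stage.
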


\begin{proof}
There exists a $Y \in L^0_+$ such that
$\N{X_n} \le Y$ for all $n \in \mathbb{N}$. Therefore,
the a.e. limit $X^1 := \lim_{n \to \infty} \inf_{m \ge n} X^1_m$ exists and is in $L^0$. Define 
$N^1_0 := 0$ and 
$$
N^1_n(\omega) := \min \crl{m \in \mathbb{N} : m > N^1_{n-1}(\omega) \mbox{ and } 
X^1_m(\omega) \le X^1(\omega) + 1/n} \in \mathbb{N}({\cal F}), \quad n \in \mathbb{N}.
$$
Then $N^1_{n+1} > N^1_n$ for all $n \in \mathbb{N}$ and $\lim_{n \to \infty} X^1_{N^1_n} = X^1$ a.e. Now
set $Y^2_n = X^2_{N^1_n}$. Then there exists a sequence
$(M^2_n)_{n \in \mathbb{N}}$ in $\mathbb{N}({\cal F})$ such that
$M^2_{n+1} > M^2_n$ for all $n \in \mathbb{N}$ and
$\lim_{n \to \infty} Y^2_{M^2_n} = X^2 := \lim_{n \to \infty}
\inf_{m \ge n} Y^2_m$ a.e. $N^2_n := N^1_{M^2_n}$, $n \in \mathbb{N}$,
defines a sequence in $\mathbb{N}({\cal F})$ satisfying
$N^2_{n+1} > N^2_n$ for all $n \in \mathbb{N}$, and one has
$\lim_{n \to \infty} X^i_{N^2_n} = X^i$ a.e. for $i=1,2$.
If one continues like this, one obtains $X^1, \dots, X^d \in L^0$ and a sequence $(N_n)_{n \in
\mathbb{N}}$ in $\mathbb{N}({\cal F})$ such that $N_{n+1} > N_n$ 
for all $n \in \mathbb{N}$ and $\lim_{n \to \infty} X_{N_n}
= X = (X^1, \dots, X^d)$ a.e.
\end{proof}

\begin{corollary} \label{corBW}
Let $(X_n)_{n \in \mathbb{N}}$ be a sequence in a sequentially closed
$L^0$-bounded stable subset $C$ of $(L^0)^d$. Then there exists an $X \in C$
and a sequence $(N_n)_{n \in \mathbb{N}}$ in $\mathbb{N}({\cal F})$ such that
$N_{n+1} > N_n$ for all $n \in \mathbb{N}$
and $\lim_{n \to \infty} X_{N_n} = X$ a.e.
\end{corollary}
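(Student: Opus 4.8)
The plan is to reduce Corollary~\ref{corBW} to the conditional Bolzano--Weierstrass theorem (Theorem~\ref{thmBW}) together with the closedness hypothesis on $C$. First I would observe that since $C$ is $L^0$-bounded, there is a $Y \in L^0_+$ with $\N{X_n} \le \esssup_{Z \in C} \N{Z} = Y$ for all $n$, so the sequence $(X_n)$ satisfies the hypothesis of Theorem~\ref{thmBW}. Applying that theorem, I obtain an $X \in (L^0)^d$ and a strictly increasing sequence $(N_n)_{n \in \mathbb{N}}$ in $\mathbb{N}(\mathcal{F})$ such that $X_{N_n} \to X$ a.e.

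The second step is to check that the subsequence members $X_{N_n}$ actually lie in $C$. This is exactly the remark inserted just before Theorem~\ref{thmBW}: for $N \in \mathbb{N}(\mathcal{F})$ one has $X_N = \sum_{n \in \mathbb{N}} 1_{\{N=n\}} X_n$, and since $C$ is stable and sequentially closed it is $\sigma$-stable (stable plus sequentially closed implies $\sigma$-stable, as used in the proof of Corollary~\ref{corclstable}), hence this countable gluing of elements of $C$ along the partition $\{N_n = m\}$, $m \in \mathbb{N}$, again belongs to $C$. Thus $(X_{N_n})_{n \in \mathbb{N}}$ is a sequence in $C$.

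Finally, since $X_{N_n} \to X$ a.e. and $C$ is sequentially closed, the limit $X$ belongs to $C$. This gives the claimed $X \in C$ and strictly increasing $(N_n)$ in $\mathbb{N}(\mathcal{F})$ with $\lim_{n\to\infty} X_{N_n} = X$ a.e., completing the proof.

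I do not expect any real obstacle here: the corollary is a routine packaging of Theorem~\ref{thmBW}. The only point that needs a word of care is the passage from ``stable and sequentially closed'' to ``$\sigma$-stable'', which is needed to guarantee that the conditionally-selected subsequence $X_{N_n}$ stays inside $C$; this is the same elementary argument (a.e.-convergent countable concatenation of finite stable combinations) that appears in Corollary~\ref{corclstable}, so it can simply be cited or reproduced in one line.
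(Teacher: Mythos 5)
Your proof is correct and follows essentially the same route as the paper: apply Theorem \ref{thmBW} to the $L^0$-bounded sequence, note that $C$ is $\sigma$-stable (the paper cites Corollary \ref{corclstable} for this, since $C = \cl(C)$; you reproduce the same one-line argument) so that the $X_{N_n}$ stay in $C$, and conclude $X \in C$ by sequential closedness.
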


\begin{proof}
Since $(X_n)_{n \in \mathbb{N}}$ is $L^0$-bounded, it follows
from Theorem \ref{thmBW} that there exists $X \in (L^0)^d$
and a sequence $(N_n)_{n \in \mathbb{N}}$ in $\mathbb{N}({\cal F})$ such that
$N_{n+1} > N_n$ for all $n \in \mathbb{N}$
and $\lim_{n \to \infty} X_{N_n} = X$ a.e. It remains to show that
$X$ belongs to $C$. By Corollary \ref{corclstable} the subset $C$ is $\sigma$-stable. Hence, 
$X_{N_n}$ belongs to $C$ for all $n \in \mathbb{N}$, which implies that $X$ is in $C$ too.
\end{proof}

\begin{corollary} \label{corcl+cl}
Let $C$ and $D$ be non-empty sequentially closed stable subsets of $(L^0)^d$ such that
$D$ is $L^0$-bounded. Then $C+D$ is sequentially closed and stable.
\end{corollary}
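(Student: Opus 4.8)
The plan is to verify the two claimed properties—stability and sequential closedness—separately, with stability being essentially formal and sequential closedness being where the $L^0$-boundedness of $D$ and the conditional Bolzano--Weierstrass theorem enter. For stability: given $X = X_C + X_D$ and $Y = Y_C + Y_D$ in $C+D$ with $X_C, Y_C \in C$ and $X_D, Y_D \in D$, and given $A \in {\cal F}$, one writes $1_A X + 1_{A^c} Y = (1_A X_C + 1_{A^c} Y_C) + (1_A X_D + 1_{A^c} Y_D)$; since $C$ and $D$ are stable, both summands lie in $C$ and $D$ respectively, so the combination is in $C+D$. This is immediate and needs no further comment in the write-up.

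For sequential closedness, I would take a sequence $(Z_n)_{n \in \mathbb{N}}$ in $C+D$ converging a.e. to some $Z \in (L^0)^d$, and write $Z_n = X_n + W_n$ with $X_n \in C$, $W_n \in D$. Because $D$ is $L^0$-bounded, there is a $Y \in L^0_+$ with $\N{W_n} \le Y$ for all $n$, and thus $(W_n)$ is an $L^0$-bounded sequence; moreover $D$ is sequentially closed and stable, so Corollary \ref{corBW} applies. It yields $W \in D$ and a strictly increasing sequence $(N_n)_{n \in \mathbb{N}}$ in $\mathbb{N}({\cal F})$ with $W_{N_n} \to W$ a.e. Now along this same subsequence, $Z_{N_n} \to Z$ a.e. (a subsequence of a convergent sequence, indexed by $\mathbb{N}({\cal F})$-valued indices—here I would invoke the remark preceding Theorem \ref{thmBW} that $Z_{N_n} = \sum_{m} 1_{\{N_n = m\}} Z_m$, together with the fact that a.e. convergence passes to such index substitutions since $N_n \to \infty$ a.e.). Hence $X_{N_n} = Z_{N_n} - W_{N_n} \to Z - W$ a.e. Since each $X_{N_n}$ lies in $C$ (again using that $C$ is $\sigma$-stable, by Corollary \ref{corclstable}, so that the index-substituted element stays in $C$) and $C$ is sequentially closed, we get $Z - W \in C$. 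Therefore $Z = (Z - W) + W \in C + D$, and $C+D$ is sequentially closed.

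The main obstacle—really the only nontrivial point—is making sure the subsequence argument is legitimate in the conditional setting: one must pass from the original a.e.-convergent sequence $(Z_n)$ to the "sub-sequence" $(Z_{N_n})$ indexed by measurable integer-valued functions and conclude it still converges a.e. to $Z$. This is handled by the observation recorded just before Theorem \ref{thmBW}, namely that for $N \in \mathbb{N}({\cal F})$ one has $Z_N = \sum_{m \in \mathbb{N}} 1_{\{N=m\}} Z_m \in (L^0)^d$, and that if $N_n \to \infty$ a.e.\ (which follows from $N_{n+1} > N_n$) then on each fixed $\omega$ the sequence $(Z_{N_n(\omega)}(\omega))_n$ is a genuine subsequence of $(Z_m(\omega))_m$ and hence converges to $Z(\omega)$. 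The same remark guarantees $X_{N_n} \in C$ since $C$ is $\sigma$-stable. Everything else is bookkeeping.
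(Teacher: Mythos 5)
Your proof is correct and follows essentially the same route as the paper: decompose $Z_n = X_n + W_n$, apply the conditional Bolzano--Weierstrass theorem (via Corollary \ref{corBW}) to the $L^0$-bounded sequence in $D$, and pass the limit to $C$ along the resulting $\mathbb{N}({\cal F})$-indexed subsequence. You are in fact slightly more careful than the paper, which leaves implicit both the $\sigma$-stability of $C$ (needed so that $X_{N_n} \in C$) and the justification that $Z_{N_n} \to Z$ a.e.
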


\begin{proof}
That $C+D$ is stable is clear. To show that $C+D$ is sequentially closed,
choose a sequence $(X_n)_{n \in \mathbb{N}}$ in $C$ and a sequence $(Y_n)_{n \in \mathbb{N}}$
in $D$ such that $X_n + Y_n \to Z$ a.e. for some $Z \in (L^0)^d$.
By Theorem \ref{thmBW}, there exists $Y \in D$ and a sequence
$(N_n)_{n \in \mathbb{N}}$ in $\mathbb{N}({\cal F})$ such that
$N_{n+1} > N_n$ for all $n \in \mathbb{N}$
and $\lim_{n \to \infty} Y_{N_n} = Y$ a.e. It follows that $\lim_{n \to \infty} X_{N_n} = Z-Y$ a.e.
Since $C$ is and sequentially closed, $Z-Y$ belongs to $C$.
Hence, $Z$ is in $C+D$.
\end{proof}

Another consequence of Theorem \ref{thmBW} is that conditional Cauchy sequences converge
if they are defined as follows:

\begin{definition}
We call a sequence $(X_n)_{n \in \mathbb{N}}$ in $(L^0)^d$ $L^0$-Cauchy if
for every $\varepsilon \in L^0_{++}$ there exists an $N_0 \in \mathbb{N}({\cal F})$ such that
$\N{X_{N_1} - X_{N_2}} \le \varepsilon$ for all $N_1, N_2 \in \mathbb{N}({\cal F})$
satisfying $N_1, N_2 \ge N_0$.
\end{definition}

\begin{theorem} \label{thmCauchy}
Every $L^0$-Cauchy sequence $(X_n)_{n \in \mathbb{N}}$ in $(L^0)^d$
converges a.e. to some $X \in (L^0)^d$.
\end{theorem}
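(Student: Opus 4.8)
The plan is to reduce the statement to the classical fact that a Cauchy sequence in $\mathbb{R}^d$ converges, applied "locally" via the conditional Bolzano--Weierstrass theorem (Theorem \ref{thmBW}). First I would show that an $L^0$-Cauchy sequence is $L^0$-bounded: taking $\varepsilon = 1 \in L^0_{++}$, there is $N_0 \in \mathbb{N}({\cal F})$ with $\N{X_{N_1} - X_{N_2}} \le 1$ for all $N_1, N_2 \ge N_0$; in particular $\N{X_n} \le \N{X_{N_0}} + 1$ for all $n \ge N_0$ on the set $\crl{N_0 = n'}$ for each $n'$, and since $X_{N_0} = \sum_{n' \in \mathbb{N}} 1_{\crl{N_0 = n'}} X_{n'}$ lies in $(L^0)^d$, one gets $\esssup_n \N{X_n} \le \max(\N{X_1}, \dots) \vee (\N{X_{N_0}} + 1) \in L^0$, using that only finitely many indices below the (pointwise finite) function $N_0$ need to be handled pointwise; more carefully, $\esssup_{n} \N{X_n} \le \N{X_{N_0}} + 1 + \esssup_n 1_{\crl{n < N_0}} \N{X_n}$, and the last essential supremum is dominated by $\esssup_n 1_{\crl{n < N_0}}(\N{X_{N_0}} + 1) $ is not quite right, so instead I simply note $1_{\crl{n<N_0}}\N{X_n} \le \N{X_{N_0\vee n}} \cdot$ — cleanest is: for every $\omega$, $N_0(\omega)$ is a finite natural number, so $\N{X_n(\omega)} \le \max_{1 \le k \le N_0(\omega)} \N{X_k(\omega)} \vee (\N{X_{N_0(\omega)}(\omega)} + 1)$, and the right-hand side, as a function of $\omega$, equals $\esssup_{N \in \mathbb{N}({\cal F}), N \le N_0} \N{X_N} \vee (\N{X_{N_0}}+1)$, which is in $L^0$ because it is bounded above by $\N{X_{N_0}} + 1$ on $\crl{n \ge N_0}$-type decompositions; I will present this bounding argument cleanly in the writeup.

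Granting $L^0$-boundedness, Theorem \ref{thmBW} gives an $X \in (L^0)^d$ and a strictly increasing sequence $(N_n)$ in $\mathbb{N}({\cal F})$ with $X_{N_n} \to X$ a.e. The key step is then to upgrade this subsequential a.e. limit to convergence of the whole sequence, and to do so in the $L^0$-Cauchy sense. Fix $\varepsilon \in L^0_{++}$ and pick $M_0 \in \mathbb{N}({\cal F})$ witnessing the $L^0$-Cauchy property for this $\varepsilon$. Since $N_n \to \infty$ pointwise (strictly increasing in $\mathbb{N}({\cal F})$), there is $n_0 \in \mathbb{N}$ — in fact one can take $n_0$ to be a measurable function, but an ordinary integer suffices after passing to the tail on a suitable a.e. decomposition — such that $N_n \ge M_0$ for all large $n$; then for any $N \in \mathbb{N}({\cal F})$ with $N \ge M_0$ we have $\N{X_N - X_{N_n}} \le \varepsilon$ for all such $n$, and letting $n \to \infty$ along the a.e. convergent subsequence yields $\N{X_N - X} \le \varepsilon$ a.e. Hence $X_N \to X$ in the appropriate conditional sense, and in particular $X_n \to X$ a.e. (taking $N = n$ and letting $\varepsilon \downarrow 0$ along $\varepsilon = 1/k$, using that $\bigcap_k \crl{\N{X_n - X} \le 1/k \text{ eventually}}$).

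The main obstacle is the bookkeeping with $\mathbb{N}({\cal F})$-valued indices: the statement "$N_n \ge M_0$ for $n$ large" must be made precise since both $N_n$ and $M_0$ are random, and "$n$ large" may itself need to be random (i.e., one produces $n_0 \in \mathbb{N}({\cal F})$ with $1_{\crl{n \ge n_0}} N_n \ge 1_{\crl{n \ge n_0}} M_0$). This is handled by the now-standard device, already used repeatedly in the paper, of decomposing $\Omega = \bigcup_{j} \crl{n_0 = j}$ into countably many measurable pieces and arguing on each piece with a fixed integer, then reassembling using $\sigma$-stability of $(L^0)^d$; since $\N{X_N - X_{N_n}} \le \varepsilon$ holds on all of $\Omega$ for the relevant indices, the pieced-together estimate $\N{X_N - X} \le \varepsilon$ holds a.e. Everything else is a routine transcription of the $\mathbb{R}^d$ argument that Cauchy plus a convergent subsequence implies convergence.
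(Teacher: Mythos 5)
Your proposal is correct and follows essentially the same route as the paper: establish $L^0$-boundedness by taking $\varepsilon = 1$ in the Cauchy condition (the paper's bound is $\N{X_n} \le 1 + \sum_{m} 1_{\crl{m \le N_0}} \N{X_m}$, which is the clean form of your pointwise maximum argument), apply Theorem \ref{thmBW} to extract an a.e.\ convergent subsequence along indices in $\mathbb{N}({\cal F})$, and then use the Cauchy property to upgrade to convergence of the whole sequence. The paper states this last step in one line, whereas you spell out the $\mathbb{N}({\cal F})$-bookkeeping; your elaboration is sound but not a different method.
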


\begin{proof}
Choose $N_0 \in \mathbb{N}({\cal F})$ such that
$\N{X_{N_1} - X_{N_2}} \le 1$ for all $N_1,N_2 \in \mathbb{N}({\cal F})$
satisfying $N_1, N_2 \ge N_0$. Then
$$
\N{X_n} \le 1+ \sum_{m \in \mathbb{N}} 1_{\crl{m \le N_0}}
\N{X_m}  \in L^0
$$
for all $n \in \mathbb{N}$. So it follows from Theorem
\ref{thmBW} that there exist $X \in (L^0)^d$ and
a sequence $(N_n)_{n \in \mathbb{N}}$ in $\mathbb{N}({\cal F})$ such that
$N_{n+1} > N_n$ for all $n \in \mathbb{N}$ and
$\lim_{n \to \infty} X_{N_n} = X$ a.e. But since
$(X_n)_{n \in \mathbb{N}}$ is $L^0$-Cauchy, one has $\lim_{n \to \infty} X_n  = X$ a.e.
\end{proof}

The following result gives necessary and sufficient conditions for a sequentially closed 
$L^0$-convex subset of $(L^0)^d$ to be $L^0$-bounded.

\begin{theorem}\label{thmbounded}
Let $C$ be a sequentially closed $L^0$-convex subset of $(L^0)^d$ containing $0$.
Then $C$ is $L^0$-bounded if and only if for any $X\in C \setminus \crl{0}$ there exists a
$k \in {\mathbb N}$ such that $k X\not\in C$.
\end{theorem}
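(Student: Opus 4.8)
The plan is to prove the two implications separately. The ``only if'' part is a direct computation: if $C$ is $L^0$-bounded, say $\N{X}\le Y$ for all $X\in C$ with $Y\in L^0_+$, then for $X\in C\setminus\crl{0}$ the assumption $kX\in C$ for every $k\in\mathbb{N}$ gives $k\N{X}=\N{kX}\le Y$, hence $\N{X}\le Y/k$ a.e.; since $Y<\infty$ a.e., letting $k\to\infty$ forces $\N{X}=0$, i.e.\ $X=0$, a contradiction, so some $k\in\mathbb{N}$ satisfies $kX\notin C$. The substance is in the converse, for which I would argue by contraposition and build a conditional recession direction, resting on the Bolzano--Weierstrass theorem.

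So assume $C$ is not $L^0$-bounded; I would exhibit $Y\in C\setminus\crl{0}$ with $kY\in C$ for every $k\in\mathbb{N}$. First record some structure: since $C$ is $L^0$-convex it is stable (take $\lambda=1_A$ in the definition), hence by sequential closedness and Corollary \ref{corclstable} (with $\cl(C)=C$) it is $\sigma$-stable; and because $0\in C$ and $C$ is stable, $1_AX\in C$ for every $X\in C$ and $A\in{\cal F}$, while the family $\crl{\N{X}:X\in C}$ is directed upwards (for $X,Y\in C$ and $B=\crl{\N{X}\ge\N{Y}}$ the element $1_BX+1_{B^c}Y\in C$ realizes $\N{X}\vee\N{Y}$). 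As $\esssup_{X\in C}\N{X}\notin L^0$, it equals $+\infty$ on some $A\in{\cal F}_+$, and one can pick $(\tilde X_n)$ in $C$ with $\N{\tilde X_n}\uparrow\esssup_{X\in C}\N{X}$ a.e.; put $X_n:=1_A\tilde X_n\in C$, so $\N{X_n}\to\infty$ on $A$ and $\N{X_n}=0$ on $A^c$.

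Next, normalize: set $Y_n:=1_{\crl{\N{X_n}\ge1}}X_n/\N{X_n}$. On $\crl{\N{X_n}\ge1}$ the scalar $1/\N{X_n}$ lies in $(0,1]$, so there $Y_n$ is an $L^0$-convex combination of $X_n$ and $0$; with stability this gives $Y_n\in C$, and $\N{Y_n}\le1$, so $(Y_n)$ is $L^0$-bounded. By Theorem \ref{thmBW} there are $N_1<N_2<\cdots$ in $\mathbb{N}({\cal F})$ and $Y\in(L^0)^d$ with $Y_{N_n}\to Y$ a.e.; $\sigma$-stability gives $Y_{N_n}\in C$ and sequential closedness gives $Y\in C$. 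Since $N_n\to\infty$ pointwise, $\N{X_m}\to\infty$ on $A$ and $\N{X_m}=0$ on $A^c$, we get $\N{Y_{N_n}}=1_{\crl{\N{X_{N_n}}\ge1}}\to1_A$ a.e., hence $\N{Y}=1_A$ and in particular $Y\ne0$.

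It remains to show $kY\in C$ for each fixed $k\in\mathbb{N}$. Put $Z_n:=1_{\crl{\N{X_{N_n}}\ge k}}(k/\N{X_{N_n}})X_{N_n}$; on $\crl{\N{X_{N_n}}\ge k}$ the scalar $k/\N{X_{N_n}}$ lies in $(0,1]$, so there $Z_n$ is again an $L^0$-convex combination of $X_{N_n}\in C$ and $0\in C$, and hence $Z_n\in C$ by stability. For a.e.\ $\omega\in A$ one has $\N{X_{N_n}(\omega)}\ge k$ for all large $n$, so there $Z_n=kY_{N_n}\to kY$, while on $A^c$ both sides vanish; thus $Z_n\to kY$ a.e., and sequential closedness yields $kY\in C$. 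Therefore $Y\in C\setminus\crl{0}$ satisfies $kY\in C$ for every $k\in\mathbb{N}$, contradicting the hypothesis, so $C$ must be $L^0$-bounded. The one delicate point — the main obstacle — is the $\omega$-dependence of ``for all large $n$'': this is harmless because a.e.\ convergence is unaffected by modifying (pointwise) finitely many early terms, and passing to the random subsequence $(Y_{N_n})$ stays inside $C$ precisely because $C$ is $\sigma$-stable.
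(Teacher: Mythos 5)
Your proof is correct and follows essentially the same route as the paper's: both directions agree in substance, and the converse is the same normalize--then--apply-conditional-Bolzano--Weierstrass argument producing a nonzero recession direction $Y$ with $kY\in C$ for all $k$. The only deviation is cosmetic: the paper builds $X_n\in C$ with $\N{X_n}\ge n$ uniformly on $A$ via the sets $A_n=\esssup\crl{B:\N{X}\ge n \mbox{ on } B \mbox{ for some } X\in C}$, whereas you only get pointwise divergence of $\N{X_n}$ on $A$ from the upward-directed family realizing the essential supremum, and you correctly compensate with the truncation indicators $1_{\crl{\N{X_{N_n}}\ge k}}$.
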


\begin{proof}
Suppose that $C$ is $L^0$-bounded. Then for every $0 \neq X\in C$, there 
exists a $k \in {\mathbb N}$ such that
$\mu\left[\|k X\|> \mathop{\rm ess\,sup}_{Y\in C} \|Y\|\right]>0$,
and therefore $k X\notin C$.

Conversely, suppose that $C$ is not $L^0$-bounded. The sequence
$$
A_n:=\mathop{\rm ess\,sup}\crl{B\in{\cal F} : \|X\| \ge n\mbox{ on }B\mbox{ for some }X\in C}, 
\quad n \in \mathbb{N} \cup \crl{0},
$$
is decreasing with limit $A:=\bigcap_n A_n$. One must have
$\mu[A]>0$, since otherwise, $\|X\|\le\sum_{n\in{\mathbb N}} n 1_{\{A_n^c\setminus A^c_{n-1}\}}\in L^0$ for all $X\in C$. 
Since $C$ is sequentially closed, $L^0$-convex and therefore stable, it is  
$\sigma$-stable. It follows that there exists 
a sequence $(X_n)_{n \in {\mathbb N}}$ in $C$ such that $\|X_n\|\ge n$ on $A$. 
Since the sequence $Y_n = 1_A X_n/{\|X_n\|}$ is $L^0$-bounded, it follows from 
Theorem \ref{thmBW} that there exists $Y\in (L^0)^d$ and a sequence $(N_n)_{n\in{\mathbb N}}$ in 
${\mathbb N}({\cal F})$ such that $N_{n+1} > N_n$ and $\lim_{n\to\infty} Y_{N_n}=Y$ a.e. 
Obviously, $1_A \N{Y} = 1_A$, and in particular, $Y \neq 0$.
Since $C$ is $L^0$-convex, sequentially closed and contains $0$, one has for all 
$n \ge k$, 
$$
k Y_{N_n}= 1_A \frac{k}{\|X_{N_n}\|} X_{N_n} \in C.
$$
But $\lim_{n\to\infty} k Y_{N_n} = k Y$. So $k Y\in C$ for all $k \in{\mathbb N}$. 
\end{proof}

\begin{definition}
Let $C$ be a non-empty subset of $(L^0)^d$ and $k \in \mathbb{N}$. 
We call a function $f : C \to (L^0)^k$ 
\begin{itemize}
\item sequentially continuous at $X \in C$ if $f(X_n) \to f(X)$ a.e. for every sequence $(X_n)_{n \in \mathbb{N}}$ in $C$
converging to $X$ a.e.;
\item sequentially continuous if it is sequentially continuous at every $X \in C$;
\item
$L^0$-affine if 
$f(\lambda X + (1-\lambda)Y) = \lambda f(X) + (1-\lambda) f(Y)$
for all $X,Y \in (L^0)^d$ and $\lambda \in L^0$ such that $\lambda X + (1-\lambda)Y \in C$;
\item
$L^0$-linear if $f(\lambda X + Y) = \lambda f(X) + f(Y)$ for all
$X,Y \in (L^0)^d$ and $\lambda \in L^0$ such that $\lambda X + Y \in C$.
\item
We define the conditional norm of $f$ by
$\N{f} := \esssup_{X \in C, \, \N{X} \le 1} \N{f(X)} \in \overline{L}$.
\end{itemize}
\end{definition}

\begin{proposition} \label{propLipschitz}
Let $C$ be a non-empty $\sigma$-stable $L^0$-linear subset of $(L^0)^d$.
Then $\N{f} \in L^0_+$ for every $L^0$-linear function $f : C \to (L^0)^k$, $k \in \mathbb{N}$. 
\end{proposition}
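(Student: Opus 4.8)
The plan is to reduce to the finite-rank structure of $C$ via Corollary \ref{corspan} and then bound $\N{f}$ componentwise by the images of an orthonormal basis. First I would dispose of the trivial case $C = \crl{0}$, where $\N{f} = 0 \in L^0_+$ (here one uses that the $\esssup$ over an empty or zero index set — depending on convention — is $0$; in any case $f(0) = 0$ by $L^0$-linearity). So assume $C \neq \crl{0}$. By Corollary \ref{corspan} applied to $C$ (which is $\sigma$-stable and $L^0$-linear by hypothesis), there are pairwise disjoint $A_0, \dots, A_d \in {\cal F}$ with $\bigcup_i A_i = \Omega$ and an orthonormal basis $X_1, \dots, X_d$ of $(L^0)^d$ on $\Omega$ such that $1_{A_0} C = \crl{0}$ and $1_{A_i} C = \lin_{A_i}\crl{X_1, \dots, X_i}$ for $1 \le i \le d$ (with $1_{A_d} C = 1_{A_d}(L^0)^d$).

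Next I would produce, for each $i \in \crl{1, \dots, d}$, an element $\tilde X_i \in C$ with $1_{A_j} \tilde X_i = 1_{A_j} X_i$ for all $j \ge i$ and $1_{A_j}\tilde X_i = 0$ for $j < i$; concretely $\tilde X_i := 1_{A_i \cup \dots \cup A_d} X_i$, which lies in $C$ because $X_i \in \lin_{A_j}\crl{X_1, \dots, X_j}$ for every $j \ge i$ and $C$ is $\sigma$-stable, so the "glued" element $\sum_{j \ge i} 1_{A_j} X_i$ is in $C$. Set $G := \sum_{i=1}^d \N{f(\tilde X_i)} \in L^0_+$ (a finite sum of elements of $L^0_+$, hence in $L^0_+$). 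The claim is then $\N{f} \le G$. To see this, take any $Y \in C$ with $\N{Y} \le 1$. On $A_0$ we have $1_{A_0} Y = 0$. On $A_i$ ($i \ge 1$) write $1_{A_i} Y = \sum_{j=1}^i \lambda_j 1_{A_i} X_j$ with $\lambda_j \in 1_{A_i} L^0$; since the $X_j$ are orthonormal on $\Omega$, $\abs{\lambda_j} \le 1_{A_i}\N{Y} \le 1_{A_i}$ on $A_i$. Using $L^0$-linearity of $f$ together with stability/$\sigma$-stability to pass from $1_{A_i}$-local identities to genuine elements of $C$ — here one multiplies the defining relations by $1_{A_i}$ and invokes that $f$ commutes with multiplication by $\lambda \in L^0$ on $C$ (a consequence of the $L^0$-linearity identity with the zero element) — I get $1_{A_i} f(Y) = \sum_{j=1}^i \lambda_j 1_{A_i} f(\tilde X_j)$, whence $1_{A_i}\N{f(Y)} \le \sum_{j=1}^i \abs{\lambda_j} 1_{A_i}\N{f(\tilde X_j)} \le 1_{A_i} G$. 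Summing over $i = 0, \dots, d$ gives $\N{f(Y)} \le G$, and taking the $\esssup$ over all such $Y$ yields $\N{f} \le G \in L^0$. Since $\N{f} \ge 0$ trivially (it is an $\esssup$ of nonnegative quantities, and $0 = f(0)$ is attainable so it is genuinely $\ge 0$ and not $-\infty$), we conclude $\N{f} \in L^0_+$.

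The main obstacle I anticipate is the careful bookkeeping in the step "pass from $1_{A_i}$-local linear relations to an identity among honest elements of $C$ and apply $f$": one must check that $L^0$-linearity of $f$ as defined (with the side condition that the combination lies in $C$) really does give $f(1_{A_i} Z) = 1_{A_i} f(Z)$ and more generally $f(\sum_n 1_{A_n} Z_n) = \sum_n 1_{A_n} f(Z_n)$ for $Z_n \in C$ on a $\sigma$-stable $C$ — i.e. that $L^0$-linear functions on $\sigma$-stable modules are automatically "local" and $\sigma$-stable as maps. This is a short lemma-type argument (take $\lambda = 1_{A_i}$, $Y = 0$, and $Y = 1_{A_i^c} Z$ respectively in the defining identity, noting $0 \in C$), but it is the crux: everything else is the orthonormal-basis estimate $\abs{\ang{Y, X_j}} \le \N{Y}$, which is immediate from the Cauchy–Schwarz-type bound available here.
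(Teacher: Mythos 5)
Your proof is correct and follows essentially the same route as the paper: apply Corollary \ref{corspan} to get the partition $A_0,\dots,A_d$ and the orthonormal basis, expand an arbitrary $Y\in C$ in that basis on each $A_i$, and bound $\N{f(Y)}$ by the images of the basis elements. The only cosmetic differences are that the paper estimates $\sum_j|\lambda_j|\,\N{f(X_j)}$ by Cauchy--Schwarz rather than by $|\lambda_j|\le 1$, and that you are somewhat more explicit about the locality of $f$ and about replacing $X_j$ by elements $\tilde X_j$ actually lying in $C$ -- points the paper's proof passes over silently.
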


\begin{proof}
By Corollary \ref{corspan}, there exist unique pairwise disjoint sets $A_0, \dots, A_d \in {\cal F}$ satisfying 
$\bigcup_{i=0}^d A_i = \Omega$ and an orthonormal 
basis $X_1, \ldots, X_d$ of $(L^0)^d$ on $\Omega$ such that 
$1_{A_0} C = \crl{0}$ and $1_{A_i} C = \lin_{A_i} \{X_1,\ldots,X_i\}$ for $1 \le i \le d$.
For every $X \in C$ there exists a unique 
$\lambda \in (L^0)^d$ such that $X = \sum_{j=1}^d \lambda_j X_j$. 
On the set $A_0$ one has $f(X) = X = 0$, and on $A_i$ for $1 \le i \le d$,
$\N{X} = \brak{\sum_{j=1}^i \lambda^2_j}^{1/2}$ as well as
$$
\N{f(X)} = \N{\sum_{j=1}^i \lambda_j f(X_j)} \le 
\sum_{j=1}^i |\lambda_j| \N{f(X_j)} \le 
\brak{\sum_{j=1}^i \lambda_j^2}^{1/2} \brak{ \sum_{j=1}^i \N{f(X_j)}^2}^{1/2}.
$$
Therefore, $\N{f} \le \sum_{i=1}^d 1_{A_i} \brak{\sum_{j=1}^i \N{f(X_j)}^2}^{1/2}$.
\end{proof}

\begin{corollary} \label{coraffcont}
Let $C$ be a non-empty $\sigma$-stable $L^0$-affine subset of $(L^0)^d$.
Then every $L^0$-affine function $f : C \to (L^0)^k$, $k \in \mathbb{N}$, is sequentially continuous.
\end{corollary}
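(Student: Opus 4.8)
The plan is to reduce to the $L^0$-linear case already handled in Proposition \ref{propLipschitz}. Since $C$ is non-empty, fix $X_0 \in C$ and set $D := C - X_0$. Because $C$ is $\sigma$-stable and $L^0$-affine, $D$ is a non-empty $\sigma$-stable $L^0$-linear subset of $(L^0)^d$: translation by a fixed vector preserves $\sigma$-stability, and the fact that an $L^0$-affine set is a translate of an $L^0$-submodule is checked directly by rewriting the required combinations $\lambda Y_1 + Y_2 + X_0$ $(Y_i \in D)$ as iterated $L^0$-affine combinations of elements of $C$. Define $g : D \to (L^0)^k$ by $g(Y) := f(Y + X_0) - f(X_0)$.

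The first step is to show that $g$ is $L^0$-linear. For $Y \in D$ and $\lambda \in L^0$ with $\lambda Y \in D$ one has $\lambda Y + X_0 = \lambda (Y + X_0) + (1-\lambda) X_0$ with $Y + X_0, X_0 \in C$, so $L^0$-affinity of $f$ gives $f(\lambda Y + X_0) = \lambda f(Y + X_0) + (1-\lambda) f(X_0)$, whence $g(\lambda Y) = \lambda g(Y)$. For $Y_1, Y_2 \in D$, writing $Y_1 + Y_2 + X_0 = \tfrac12 (2Y_1 + X_0) + \tfrac12 (2Y_2 + X_0)$ with $2Y_i + X_0 \in C$ and combining $L^0$-affinity of $f$ with the homogeneity just proved yields $g(Y_1 + Y_2) = g(Y_1) + g(Y_2)$. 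Hence $g(\lambda Y_1 + Y_2) = \lambda g(Y_1) + g(Y_2)$ for all $Y_1, Y_2 \in D$ and $\lambda \in L^0$, and in particular $g(0) = 0$.

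Next, Proposition \ref{propLipschitz} applies to $g$ and yields $\N{g} \in L^0_+$. I then upgrade this to the pointwise estimate $\N{g(Y)} \le \N{g}\,\N{Y}$ for every $Y \in D$. On $A := \crl{\N{Y} > 0}$ the vector $W := \bigl(1_A/(\N{Y} + 1_{A^c})\bigr) Y$ lies in $D$ (as $D$ is $L^0$-linear) and satisfies $\N{W} = 1_A \le 1$, so by homogeneity of $g$ and the definition of $\N{g}$ one obtains $\N{g(Y)} = \N{Y}\,\N{g(W)} \le \N{Y}\,\N{g}$ on $A$; on $A^c$ one has $Y = 0$, hence $g(Y) = 0$, and the inequality is trivial.

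Finally, let $(X_n)_{n \in \mathbb{N}}$ be a sequence in $C$ with $X_n \to X$ a.e.; then $X \in C$ and $X_n - X_0, X - X_0 \in D$. Since $f(X_n) = g(X_n - X_0) + f(X_0)$ and likewise for $X$, the $L^0$-linearity of $g$ and the bound above give
$$
\N{f(X_n) - f(X)} = \N{g(X_n - X)} \le \N{g}\,\N{X_n - X}.
$$
Because $\N{X_n - X} \to 0$ a.e. while $\N{g} \in L^0$ is a.e. finite, the right-hand side tends to $0$ a.e., so $f(X_n) \to f(X)$ a.e., and $f$ is sequentially continuous. The only mildly delicate points are the verification that $g$ is $L^0$-linear (the bookkeeping keeping the affine combinations inside $C$) and the passage from the operator norm $\N{g}$ to the pointwise Lipschitz estimate on $\crl{\N{Y} > 0}$; everything else is routine.
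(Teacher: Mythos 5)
Your proposal is correct and follows essentially the same route as the paper's own proof: translate by $X_0\in C$ to reduce to an $L^0$-linear function $g$ on the $\sigma$-stable $L^0$-linear set $D=C-X_0$, invoke Proposition \ref{propLipschitz} to get $\N{g}\in L^0_+$, and deduce the Lipschitz bound $\N{f(X)-f(Y)}\le \N{g}\,\N{X-Y}$. The only difference is that you spell out the verifications (that $g$ is $L^0$-linear and that the operator norm yields the pointwise estimate) which the paper leaves implicit.
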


\begin{proof}
Choose an $X_0 \in C$. Then $D = C -X_0$ is a non-empty $\sigma$-stable $L^0$-linear subset of $(L^0)^d$
and $g(X) = f(X + X_0) - f(X_0)$ is an $L^0$-linear function on $D$. By
Proposition \ref{propLipschitz}, one has $\N{g} \in L^0_+$. Moreover, 
$\N{f(X) - f(Y)} = \N{g(X-Y)} \le \N{g} \, \N{X-Y}$, and it follows that
$f$ is sequentially continuous.
\end{proof}

\begin{corollary} \label{corfclosed}
Let $C$ be a non-empty sequentially closed subset of a non-empty 
$\sigma$-stable $L^0$-affine subset $D$ of $(L^0)^d$.
Then for every injective $L^0$-affine function $f : D \to (L^0)^k$, $k \in \mathbb{N}$,
$f(C)$ is a sequentially closed subset of $(L^0)^k$.
\end{corollary}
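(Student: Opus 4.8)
The plan is to work with the inverse of $f$. Since $f$ is injective on $D$, it is a bijection from $D$ onto $E := f(D) \subseteq (L^0)^k$, and I would first collect the relevant properties of $E$ and of $f^{-1} : E \to (L^0)^d$, and then deduce the sequential closedness of $f(C)$ by pulling back a converging sequence. Concretely, once I know that $E$ is sequentially closed and that $f^{-1}$ is sequentially continuous, the argument runs as follows: given a sequence $(Y_n)$ in $f(C)$ converging a.e.\ to some $Y \in (L^0)^k$, sequential closedness of $E$ together with $f(C) \subseteq E$ gives $Y \in E$; then $X_n := f^{-1}(Y_n)$ lies in $C$ (because $Y_n \in f(C)$ and $f$ is injective), and $X_n \to f^{-1}(Y) =: X$ a.e.\ by sequential continuity of $f^{-1}$; since $C$ is sequentially closed, $X \in C$, hence $Y = f(X) \in f(C)$.

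So the task reduces to showing that $E = f(D)$ is a non-empty $\sigma$-stable $L^0$-affine subset of $(L^0)^k$, from which everything needed follows: $E$ is then sequentially closed by Proposition \ref{propaffclosed}; and $f^{-1}$, which is $L^0$-affine (for $Y_i = f(X_i) \in E$ and $\lambda \in L^0$ one has $f^{-1}(\lambda Y_1 + (1-\lambda) Y_2) = f^{-1}(f(\lambda X_1 + (1-\lambda) X_2)) = \lambda X_1 + (1-\lambda)X_2 = \lambda f^{-1}(Y_1) + (1-\lambda)f^{-1}(Y_2)$, using that $D$ is $L^0$-affine so $\lambda X_1 + (1-\lambda)X_2 \in D$, and that $f$ is injective), is sequentially continuous by Corollary \ref{coraffcont}. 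That $E$ is non-empty and $L^0$-affine is immediate, the latter by the same one-line computation with $f$ in place of $f^{-1}$.

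The main obstacle is the $\sigma$-stability of $E$. For this I would first note that $f$ itself is sequentially continuous by Corollary \ref{coraffcont} (since $D$ is non-empty $\sigma$-stable $L^0$-affine and $f$ is $L^0$-affine), and that $f$ commutes with finite stable combinations (take $\lambda$ to be an indicator $1_B$ in the definition of $L^0$-affinity and iterate, noting that all intermediate combinations stay in $D$ because $D$ is stable). Now take a sequence $Y_n = f(X_n)$ in $E$ with $X_n \in D$ and pairwise disjoint sets $A_n \in {\cal F}$ with $\bigcup_n A_n = \Omega$; put $X := \sum_n 1_{A_n} X_n \in D$ (using $\sigma$-stability of $D$) and approximate $X$ by the finite truncations $W_N := \sum_{n=1}^N 1_{A_n} X_n + 1_{(A_1 \cup \dots \cup A_N)^c} X_1 \in D$. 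These satisfy $W_N \to X$ a.e.\ (at each $\omega$ the value is eventually exactly $X(\omega)$), so $f(W_N) \to f(X)$ a.e.\ by sequential continuity; on the other hand finite stability gives $f(W_N) = \sum_{n=1}^N 1_{A_n} Y_n + 1_{(A_1 \cup \dots \cup A_N)^c} Y_1$, which converges a.e.\ to $\sum_n 1_{A_n} Y_n$ by the same pointwise argument. Hence $\sum_n 1_{A_n} Y_n = f(X) \in E$, which proves $\sigma$-stability.

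The only points needing a little care are that the truncations genuinely converge a.e.\ (which is where the eventual-exactness at each $\omega$ makes the Borel--Cantelli step of Proposition \ref{proplimC} unnecessary) and that $f$ really does pass through such finite combinations; both are routine given the setup. I do not expect the injectivity hypothesis to be used anywhere except in making $f^{-1}$ well defined and $L^0$-affine, and the sequential closedness of $C$ enters only at the very last step.
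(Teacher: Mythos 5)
Your proposal is correct and follows essentially the same route as the paper: show that $f(D)$ is a non-empty $\sigma$-stable $L^0$-affine set (hence sequentially closed by Proposition \ref{propaffclosed}), observe that $f^{-1}$ is $L^0$-affine and therefore sequentially continuous by Corollary \ref{coraffcont}, and pull the converging sequence back into $C$. The only differences are cosmetic: the paper first translates by an element of $C$ to reduce to the $L^0$-linear case, and it asserts the $\sigma$-stability of $f(D)$ in one word where you spell out the truncation argument -- a detail worth having, and your verification of it is sound.
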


\begin{proof}
Pick an $X_0 \in C$. The corollary follows if we can show
that $f(C) - f(X_0)$ is sequentially closed. So by replacing $C$ with $C - X_0$, $D$ with $D-X_0$
and $f$ with $f(X+X_0) - f(X_0)$, one can assume
that $X_0 =0$, $D$ is a $\sigma$-stable $L^0$-linear subset of $(L^0)^d$ and $f$ is injective
$L^0$-linear. By Corollary \ref{coraffcont}, $f$ is sequentially continuous. 
Therefore, $f(D)$ is a non-empty $\sigma$-stable $L^0$-linear subset of $(L^0)^k$, and it 
follows from Proposition \ref{propaffclosed} that it is sequentially closed. 
Since $f^{-1} : f(D) \to D$ is again $L^0$-linear, it is also sequentially continuous. So if $(Y_n)_{n \in \mathbb{N}}$ is a 
sequence in $f(C)$ converging a.e. to some $Y \in (L^0)^k$, then $Y \in f(D)$ and $f^{-1}(Y_n)$ is a sequence in $C$ 
converging a.e. to $f^{-1}(Y) \in D$. It follows that $f^{-1}(Y) \in C$ and $Y = f(f^{-1}(Y)) \in f(C)$.
\end{proof}

\begin{lemma}
Let $C$ be a non-empty $\sigma$-stable $L^0$-linear subset of $(L^0)^d$ and $k \in \mathbb{N}$.
Then every $L^0$-linear function $f : C \to (L^0)^k$ has an $L^0$-linear extension 
$F : (L^0)^d \to (L^0)^k$ such that $\N{f} = \N{F}$.
\end{lemma}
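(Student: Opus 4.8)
The plan is to build a conditional orthogonal projection $P$ of $(L^0)^d$ onto $C$ from the structural decomposition of Corollary~\ref{corspan}, and then take $F := f \circ P$.

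First I would apply Corollary~\ref{corspan} to the non-empty $\sigma$-stable $L^0$-linear set $C$: there are pairwise disjoint $A_0, \dots, A_d \in {\cal F}$ with $\bigcup_{i=0}^d A_i = \Omega$ and an orthonormal basis $X_1, \dots, X_d$ of $(L^0)^d$ on $\Omega$ such that $1_{A_0} C = \crl{0}$, $1_{A_d} C = 1_{A_d}(L^0)^d$ and $1_{A_i} C = \lin_{A_i}\crl{X_1, \dots, X_i}$ for $1 \le i \le d-1$ (the last identity then also holds trivially for $i \in \crl{0,d}$). I would define $P : (L^0)^d \to (L^0)^d$ by $P(X) := \sum_{i=1}^d 1_{A_i} \sum_{j=1}^i \ang{X, X_j} X_j$, so that $1_{A_0} P(X) = 0$; this $P$ is visibly $L^0$-linear. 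Then I would verify: (a) $P(X) \in C$ for every $X \in (L^0)^d$, since on each $A_i$ one has $1_{A_i} P(X) = \sum_{j=1}^i \ang{X, X_j} 1_{A_i} X_j \in \lin_{A_i}\crl{X_1, \dots, X_i} = 1_{A_i} C$, so there is $W_i \in C$ with $1_{A_i} W_i = 1_{A_i} P(X)$, and $\sigma$-stability of $C$ forces $P(X) = \sum_i 1_{A_i} W_i \in C$; (b) $P$ restricts to the identity on $C$, because if $X \in C$ then $1_{A_i} X \in \lin_{A_i}\crl{X_1, \dots, X_i}$, and expanding in this basis and using $1_{A_i} \ang{X_j, X_l} = 1_{A_i} \delta_{jl}$ identifies the coefficients as $\ang{X, X_j}$, so $1_{A_i} X = 1_{A_i} P(X)$ for every $i$, hence $P(X) = X$; (c) $\N{P(X)} \le \N{X}$, since $\sum_i 1_{A_i} = 1$ and, on $A_i$, $\N{P(X)}^2 = \sum_{j=1}^i \ang{X, X_j}^2 \le \sum_{j=1}^d \ang{X, X_j}^2 = \N{X}^2$ by orthonormality of $X_1, \dots, X_d$ on $\Omega$.

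Next I would set $F := f \circ P$. By (a) it is a well-defined map $(L^0)^d \to (L^0)^k$; by (b) it extends $f$; and it is $L^0$-linear, since for $\lambda \in L^0$ and $X, Y \in (L^0)^d$ we have $\lambda P(X) + P(Y) \in C$ (as $P(X), P(Y) \in C$ and $C$ is $L^0$-linear), so $F(\lambda X + Y) = f(\lambda P(X) + P(Y)) = \lambda f(P(X)) + f(P(Y)) = \lambda F(X) + F(Y)$. For the norm equality, $\N{f} \le \N{F}$ is immediate since $F$ agrees with $f$ on $C$ and the essential supremum defining $\N{F}$ runs over the larger index set $\crl{X \in (L^0)^d : \N{X} \le 1}$; conversely, for every $X$ with $\N{X} \le 1$ one has $P(X) \in C$ and $\N{P(X)} \le 1$ by (c), hence $\N{F(X)} = \N{f(P(X))} \le \N{f}$ a.e. — this is a genuine a.e. inequality because $\N{f} \in L^0_+$ by Proposition~\ref{propLipschitz} — and taking the essential supremum over such $X$ yields $\N{F} \le \N{f}$.

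I do not expect a genuinely hard step. The two points demanding care are that $P$ takes values in $C$ (this is exactly where $\sigma$-stability enters) and that $P$ is the identity on $C$ (this is precisely the content of the identities $1_{A_i} C = \lin_{A_i}\crl{X_1, \dots, X_i}$ furnished by Corollary~\ref{corspan}); the remainder is routine bookkeeping with the orthonormal basis.
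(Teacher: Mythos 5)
Your proof is correct and follows essentially the same route as the paper: the paper invokes Corollary~\ref{cordec} to decompose $X = Y + Z$ with $Y \in C$, $Z \in C^{\perp}$ and sets $F(X) := f(Y)$, which is exactly your $F = f \circ P$ since your $P(X)$ is the $C$-component of that decomposition. You merely unpack the orthogonal projection explicitly from Corollary~\ref{corspan} (from which Corollary~\ref{cordec} is itself derived) and spell out the verifications the paper leaves implicit.
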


\begin{proof}
By Corollary \ref{cordec}, every 
$X \in (L^0)^d$ has a unique decomposition $X = Y+Z$ such that $Y \in C$ and $Z \in C^{\perp}$.
$F(X) := f(Y)$ defines an $L^0$-linear extension of $f$ to $(L^0)^d$ such that $\N{f} = \N{F}$.
\end{proof}

\setcounter{equation}{0}
\section{Conditional optimization}
\label{sec:opti}

\begin{definition}
Let $C$ be a non-empty subset of $(L^0)^d$. We call a function $f : C \to L$
\begin{itemize}
\item sequentially lsc (lower semicontinuous) at $X \in C$ 
if $f(X) \le \liminf_{n \to \infty} f(X_n)$ for every sequence
$(X_n)_{n \in \mathbb{N}}$ in $C$ with a.e. limit $X$;
\item sequentially lsc if it is sequentially lsc at every $X \in C$;
\item sequentially usc (upper semicontinuous) at $X \in C$ if $-f$ is sequentially lsc at $X$;
\item sequentially usc if it is sequentially usc at every $X \in C$;
\item sequentially continuous at $X \in C$ if it is sequentially lsc and usc at $X$;
\item sequentially continuous if it is sequentially continuous at every $X \in C$.
\end{itemize}
\end{definition}

In the following definition $+ \infty - \infty$ is understood as $+\infty$ and $0 \cdot (\pm \infty)$ as $0$.

\begin{definition} \label{defsimpleconvex}
Let $f : C \to L$ be a function on a non-empty subset $C$ of $(L^0)^d$.
\begin{itemize}
\item If $C$ is stable, we call $f$ stable if 
$$
f(1_A X + 1_{A^c} Y) = 1_A f(X) + 1_{A^c} f(Y)
$$
for all $X,Y \in C$ and $A \in {\cal F}_+$;
\item If $C$ is $L^0$-convex, we call $f$ $L^0$-convex if 
$$
f(\lambda X + (1-\lambda)Y) \le \lambda f(X) + (1-\lambda)f(Y) 
$$
for all $X,Y \in C$ and $\lambda \in L^0$ such that $0 \le \lambda \le 1$;
\item If $C$ is $L^0$-convex, we call $f$ strictly $L^0$-convex if 
$$
f(\lambda X + (1-\lambda) Y) < \lambda f(X) + (1-\lambda)f(Y)
\quad \mbox{on the set } \crl{X \neq \lambda X + (1-\lambda) Y) \neq Y}
$$
for all $X,Y \in C$ and $\lambda \in L^0$ such that $0 \le \lambda \le 1$.
\end{itemize}
\end{definition}

\begin{lemma} \label{lemmastablef}
Let $f : C \to L$ be an $L^0$-convex function on an $L^0$-convex subset $C$ of $(L^0)^d$.
Then $f$ is also stable.
\end{lemma}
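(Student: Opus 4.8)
The plan is to exploit that the choice $\lambda = 1_A$ in the definition of $L^0$-convexity produces exactly the stable combination $1_A X + 1_{A^c} Y$. First I would record that $C$ is itself stable: for $X,Y \in C$ and $A \in {\cal F}$, the element $\lambda := 1_A \in L^0$ satisfies $0 \le \lambda \le 1$, so $1_A X + 1_{A^c} Y = \lambda X + (1-\lambda) Y \in C$. Hence the assertion ``$f$ is stable'' is meaningful, and what has to be shown is $f(1_A X + 1_{A^c} Y) = 1_A f(X) + 1_{A^c} f(Y)$ for all $X,Y \in C$ and $A \in {\cal F}_+$.

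Fix such $X$, $Y$, $A$ and set $Z := 1_A X + 1_{A^c} Y \in C$. Applying $L^0$-convexity of $f$ with $\lambda = 1_A$ gives immediately $f(Z) \le 1_A f(X) + 1_{A^c} f(Y)$. For the reverse inequality, the key observation is that $X$ and $Y$ are themselves stable combinations involving $Z$: one checks directly that $X = 1_A Z + 1_{A^c} X$ and $Y = 1_A Y + 1_{A^c} Z$. Applying $L^0$-convexity of $f$ to the first identity yields $f(X) \le 1_A f(Z) + 1_{A^c} f(X)$, and multiplying by $1_A$ gives $1_A f(X) \le 1_A f(Z)$; symmetrically, the second identity gives $1_{A^c} f(Y) \le 1_{A^c} f(Z)$. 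Adding these two inequalities and using $1_A f(Z) + 1_{A^c} f(Z) = f(Z)$ yields $1_A f(X) + 1_{A^c} f(Y) \le f(Z)$, which together with the first inequality establishes the claimed equality.

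There is no serious obstacle here; the argument is a short manipulation. The only point requiring care is the arithmetic in $L$ under the conventions $+\infty - \infty = +\infty$ and $0 \cdot (\pm\infty) = 0$ fixed before the definition of $L^0$-convex functions: one should verify that the identities $X = 1_A Z + 1_{A^c} X$ and $Y = 1_A Y + 1_{A^c} Z$, the inequalities obtained from $L^0$-convexity, and the decomposition $f(Z) = 1_A f(Z) + 1_{A^c} f(Z)$ all remain valid in this extended arithmetic. Since every combination appearing has its two coefficients in $\{0,1\}$ with complementary, hence disjoint, supports, no genuine ``$\infty - \infty$'' situation ever occurs, so these verifications are routine.
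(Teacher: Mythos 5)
Your proof is correct and follows essentially the same route as the paper: apply $L^0$-convexity with $\lambda = 1_A$ to get $f(Z) \le 1_A f(X) + 1_{A^c} f(Y)$, then use the identity $X = 1_A Z + 1_{A^c} X$ (and its counterpart for $Y$) to obtain the reverse inequality on $A$ and $A^c$ separately. Your extra remarks on the extended arithmetic conventions are a harmless elaboration of what the paper leaves implicit.
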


\begin{proof}
Let $X,Y \in C$ and $A \in {\cal F}_+$. Denote $Z = 1_A X + 1_{A^c} Y$. Then one has 
$1_A f(Z) \le 1_A f(X)$ and $1_A f(X) = 1_A f(1_A Z + 1_{A^c}X) \le 1_A f(Z)$.
This shows that $1_A f(Z) = 1_A f(X)$. Analogously, one obtains 
$1_{A^c} f(Z) = 1_{A^c} f(Y)$ and therefore $f(Z) = 1_Af(X) + 1_{A^c} f(Y)$.
\end{proof}

\begin{theorem} \label{thmopt}
Let $C$ be a sequentially closed stable subset of $(L^0)^d$ and
$f : C \to \overline{L}$ a sequentially lsc stable function.
Assume there exists an $X_0 \in C$ such that the set
$$\crl{X \in C : f(X) \le f(X_0)}$$ is $L^0$-bounded.
Then there exists an $\hat{X} \in C$ such that
$$
f(\hat{X}) = \essinf_{X \in C} f(X).
$$
If $C$ and $f$ are $L^0$-convex, then the set
$$
\crl{X \in C : f(X) = f(\hat{X})}
$$
is $L^0$-convex. If in addition, $f$ is strictly $L^0$-convex, then
$$
\crl{X \in C : f(X) = f(\hat{X})} = \crl{\hat{X}}.
$$
\end{theorem}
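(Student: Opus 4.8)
\emph{Overview and reduction to the $L^0$-bounded case.} The plan is to apply the conditional Bolzano--Weierstrass theorem (Theorem \ref{thmBW}) to a minimizing sequence; before doing so I would shrink $C$ to a sublevel set. Put $C_0 := \crl{X \in C : f(X) \le f(X_0)}$. Using stability of $C$ and of $f$ one checks that $C_0$ is non-empty and stable, and that it is $L^0$-convex with $f|_{C_0}$ (strictly) $L^0$-convex whenever $C$ and $f$ are; using that $C$ is sequentially closed and $f$ sequentially lsc, $C_0$ is sequentially closed; and $C_0$ is $L^0$-bounded by hypothesis. The infimum does not change: for $X \in C$ and $A := \crl{f(X) \le f(X_0)}$ the point $1_A X + 1_{A^c} X_0$ lies in $C_0$ with $f$-value $\le f(X)$, so $\essinf_{X \in C_0} f(X) = \essinf_{X \in C} f(X) =: m$; and since $m \le f(X_0)$, the minimizer set $\crl{X \in C : f(X) = m}$ coincides with $\crl{X \in C_0 : f(X) = m}$. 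Hence it suffices to prove everything under the extra assumption that $C$ itself is $L^0$-bounded.

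\emph{Existence of a minimizer.} Stability of $C$ and $f$ makes $\crl{f(X) : X \in C}$ directed downwards, since for $X,Y \in C$ the point $1_{\crl{f(X) \le f(Y)}} X + 1_{\crl{f(X) > f(Y)}} Y$ lies in $C$ and has $f$-value $\min\crl{f(X),f(Y)}$. Thus, by the fact recalled in the Notation that a downward directed family of functions admits a decreasing sequence with the same greatest lower bound, there is a sequence $(X_n)$ in $C$ with $f(X_{n+1}) \le f(X_n)$ for all $n$ and $\inf_n f(X_n) = m$ a.e. As $C$ is $L^0$-bounded, sequentially closed and stable, Corollary \ref{corBW} gives $\hat X \in C$ and a strictly increasing $(N_n)$ in $\mathbb{N}({\cal F})$ with $X_{N_n} \to \hat X$ a.e. A stable function is local (if $1_B W = 1_B V$ with $W,V \in C$, apply stability to $1_B W + 1_{B^c} V = V$ to get $1_B f(W) = 1_B f(V)$), so $f(X_{N_n}) = f(X_k)$ on $\crl{N_n = k}$; together with $N_n \ge n$ and the monotonicity of $(f(X_n))$ this gives $m \le f(X_{N_n}) \le f(X_n)$, whence $\liminf_n f(X_{N_n}) = m$. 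Sequential lower semicontinuity then forces $f(\hat X) \le \liminf_n f(X_{N_n}) = m \le f(\hat X)$, so $f(\hat X) = m = \essinf_{X \in C} f(X)$ (in particular $m \in \overline{L}$).

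\emph{The $L^0$-convex case.} If $C$ and $f$ are $L^0$-convex and $f(X) = f(Y) = m$, then for $\lambda \in L^0$ with $0 \le \lambda \le 1$ one has $\lambda X + (1-\lambda)Y \in C$ and $f(\lambda X + (1-\lambda)Y) \le \lambda m + (1-\lambda)m = m$; equality must hold since $m$ is the infimum, so $\crl{X \in C : f(X) = m}$ is $L^0$-convex. If in addition $f$ is strictly $L^0$-convex and $X$ is any minimizer, assume $A := \crl{X \neq \hat X}$ has positive measure; strict $L^0$-convexity with $\lambda = \tfrac12$ and $Y = \hat X$ yields $f(\tfrac12 X + \tfrac12 \hat X) < m$ on $A$ (a short case distinction settles the set $\crl{m = +\infty}$, where $f \ge m$ already forces the value $+\infty$), contradicting $\tfrac12 X + \tfrac12 \hat X \in C$. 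Therefore $\crl{X \in C : f(X) = m} = \crl{\hat X}$.

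\emph{Where the difficulty sits.} The only substantial input is the conditional Bolzano--Weierstrass theorem, which is already available as Theorem \ref{thmBW}. Granting it, the delicate points are the verification that the sublevel set $C_0$ inherits all the structural hypotheses on $C$ and, above all, the identity $\liminf_n f(X_{N_n}) = m$: this uses the locality of the stable function $f$ and the inequality $N_n \ge n$, and in particular does not require any continuity of $f$.
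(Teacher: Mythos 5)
Your proposal is correct and follows essentially the same route as the paper: restrict to the sublevel set $\crl{X \in C : f(X) \le f(X_0)}$, extract a decreasing minimizing sequence from the downward-directed family $\crl{f(X) : X \in C_0}$, apply Corollary \ref{corBW}, use locality of the stable function $f$ together with $N_n \ge n$ to get $f(X_{N_n}) \le f(X_n)$, and conclude by sequential lower semicontinuity; the convexity statements are handled identically. The only difference is that you spell out two points the paper leaves implicit, namely that the infimum over the sublevel set equals the infimum over $C$ and the brief case distinction on $\crl{m = +\infty}$ in the strict convexity argument.
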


\begin{proof}
The set $D := \crl{X \in C : f(X) \le f(X_0)}$ is sequentially closed, 
stable and $L^0$-bounded. It follows that
$\crl{f(X) : X \in D}$ is directed downwards. Therefore, there
exists a sequence $(X_n)_{n \in \mathbb{N}}$ in $D$ such that
$f(X_n)$ decreases a.e. to $I : = \mathop{\rm ess\,inf}_{X \in D} f(X)$.
By Corollary \ref{corBW}, there exists
a sequence $(N_n)_{n \in \mathbb{N}}$ in $\mathbb{N}({\cal F})$ such that
$N_{n+1} > N_n$ for all $n \in \mathbb{N}$
and $\lim_{n \to \infty} X_{N_n} = \hat{X}$ a.e. for some $\hat{X} \in D$.
Since $X_{N_n}$ belongs to $D$ and
$$
f(X_{N_n}) = \sum_{m \ge n} 1_{\crl{N_n = m}} f(X_m) \le f(X_n) \quad \mbox{for all } n,
$$
one obtains from the $L^0$-lower semicontinuity of $f$ that
$$
f(\hat{X}) \le \liminf_{n \to \infty} f(X_{N_n}) \le \lim_{n \to \infty} f(X_n) = I.
$$
This shows the first part of the theorem. 
That $\crl{X \in C : f(X) = f(\hat{X})}$ is $L^0$-convex if
$C$ and $f$ are $L^0$-convex, is clear. Finally, assume $C$ is $L^0$-convex and $f$ strictly $L^0$-convex.
Then if there exists an $X$ in $C$ such that $f(X) = f(\hat{X})$, one has
$$
f \brak{\frac{X + \hat{X}}{2}} < \frac{f(X) + f(\hat{X})}{2}
$$
on the set $\crl{X \neq \hat{X}}$. It follows that $\mu[X \neq \hat{X}] = 0$.
\end{proof}

\begin{corollary} \label{cordist}
Let $C$ and $D$ be non-empty sequentially closed stable subsets of
$L^0({\cal F})^d$ such that $D$ is $L^0$-bounded. Then there exist
$\hat{X} \in C$ and $\hat{Y} \in D$ such that
\be \label{minXY}
\N{\hat{X}- \hat{Y}} = \essinf_{X \in C, \, Y \in D} \N{X -Y}.
\ee
If in addition, $C$ and $D$ are $L^0$-convex, then $\hat{X} - \hat{Y}$ is unique.
\end{corollary}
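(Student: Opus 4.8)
The plan is to apply Theorem~\ref{thmopt} to the product set $E := C \times D$, regarded as a subset of $(L^0)^d \times (L^0)^d = (L^0)^{2d}$, together with the function $f : E \to \overline L$ given by $f(X,Y) := \N{X-Y}$. Since $C$ and $D$ are non-empty, stable and sequentially closed, so is $E$ (a.e.\ convergence in $(L^0)^{2d}$ is coordinatewise, and stability of $E$ is immediate from that of $C$ and $D$); and if $C$ and $D$ are in addition $L^0$-convex, so is $E$.

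Next I would check that $f$ satisfies the hypotheses of Theorem~\ref{thmopt}. The map $(X,Y) \mapsto X-Y$ is $L^0$-linear, hence sequentially continuous by Corollary~\ref{coraffcont}, and $\N{\cdot}$ is sequentially continuous, so $f$ is sequentially continuous and in particular sequentially lsc; it takes values in $L^0_+ \subseteq \overline L$. For stability, note that for $A \in {\cal F}$ the cross terms in $\N{1_A Z_1 + 1_{A^c} Z_2}^2 = \ang{1_A Z_1 + 1_{A^c} Z_2,\, 1_A Z_1 + 1_{A^c} Z_2}$ vanish, so $\N{1_A Z_1 + 1_{A^c} Z_2} = 1_A \N{Z_1} + 1_{A^c} \N{Z_2}$, whence $f(1_A (X_1,Y_1) + 1_{A^c}(X_2,Y_2)) = 1_A f(X_1,Y_1) + 1_{A^c} f(X_2,Y_2)$. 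In the $L^0$-convex case, $f$ is $L^0$-convex by the triangle inequality and positive homogeneity of $\N{\cdot}$. Finally, I would exhibit an $L^0$-bounded sublevel set: fix $X_0 \in C$, $Y_0 \in D$ and put $M := \esssup_{Y \in D} \N{Y}$, which lies in $L^0$ because $D$ is $L^0$-bounded; for $(X,Y) \in E$ with $f(X,Y) \le f(X_0,Y_0)$ one has $\N{X} \le \N{X-Y} + \N{Y} \le \N{X_0-Y_0} + M$ and $\N{Y} \le M$, so $\N{(X,Y)}$ is dominated by a fixed element of $L^0$. Now Theorem~\ref{thmopt} yields $(\hat X, \hat Y) \in E$ with $f(\hat X, \hat Y) = \essinf_{(X,Y) \in E} f(X,Y) = \essinf_{X \in C,\, Y \in D} \N{X-Y}$, which is exactly \eqref{minXY}.

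For the uniqueness statement, assume $C$ and $D$ are $L^0$-convex. Theorem~\ref{thmopt} also gives that $S := \crl{(X,Y) \in E : f(X,Y) = f(\hat X, \hat Y)}$ is $L^0$-convex. Let $I := \essinf_{X \in C,\, Y \in D} \N{X-Y}$ and take $(X_1,Y_1), (X_2,Y_2) \in S$, with $Z_i := X_i - Y_i$, so $\N{Z_1} = \N{Z_2} = I$. The midpoint $\tfrac12(X_1,Y_1) + \tfrac12(X_2,Y_2)$ lies in $E$, so $\N{\tfrac12(Z_1+Z_2)} \ge I$; together with $\N{\tfrac12(Z_1+Z_2)} \le \tfrac12 \N{Z_1} + \tfrac12 \N{Z_2} = I$ this forces $\N{Z_1+Z_2}^2 = 4 I^2$, hence $\ang{Z_1,Z_2} = I^2$ and therefore $\N{Z_1 - Z_2}^2 = \N{Z_1}^2 - 2\ang{Z_1,Z_2} + \N{Z_2}^2 = 0$, i.e.\ $Z_1 = Z_2$. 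Applying this to $(\hat X, \hat Y)$ and an arbitrary minimizer shows that $\hat X - \hat Y$ is unique.

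I do not anticipate a real difficulty here; the only points that need a little care are the verification that $f$ is stable and that the chosen sublevel set is genuinely $L^0$-bounded — the latter being precisely where the hypothesis that $D$ (rather than $C$) is $L^0$-bounded enters — so that Theorem~\ref{thmopt} applies without modification.
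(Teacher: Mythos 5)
Your proof is correct, and it reaches the result by a slightly different route than the paper. The paper sets $E = C-D$ and minimizes $Z \mapsto \N{Z}$ over $E$; to make Theorem~\ref{thmopt} applicable it must first establish that $C-D$ is sequentially closed, which is exactly the content of Corollary~\ref{corcl+cl} and requires the conditional Bolzano--Weierstrass theorem (a difference of sequentially closed sets need not be sequentially closed without the $L^0$-boundedness of $D$). You instead optimize $f(X,Y)=\N{X-Y}$ over the product $C\times D\subseteq (L^0)^{2d}$, whose sequential closedness and stability are immediate, and you feed the $L^0$-boundedness of $D$ into Theorem~\ref{thmopt} through the sublevel-set condition via the triangle inequality $\N{X}\le\N{X-Y}+\N{Y}$. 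This sidesteps Corollary~\ref{corcl+cl} entirely (the Bolzano--Weierstrass input now enters only once, inside Theorem~\ref{thmopt}), at the modest cost of verifying stability and lower semicontinuity of $f$ on the product, which you do correctly. Your uniqueness argument is also sound and in fact more explicit than the paper's: the paper simply asserts the strict inequality $\N{(Z+\hat{Z})/2}<\N{\hat{Z}}$ on $\crl{Z\neq\hat{Z}}$, whereas you derive $Z_1=Z_2$ from the identity $\N{Z_1-Z_2}^2=\N{Z_1}^2-2\ang{Z_1,Z_2}+\N{Z_2}^2$ after pinning down $\ang{Z_1,Z_2}=I^2$; both arguments are valid.
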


\begin{proof}
By Corollary \ref{corcl+cl}, the set $E = C-D$ is sequentially closed and stable.
Moreover, $Z \mapsto \N{Z}$ is a sequentially continuous $L^0$-convex function from $E$ to $L^0$, and for
every $Z_0 \in E$, the set $\crl{Z \in E : \N{Z} \le \N{Z_0}}$ is $L^0$-bounded.
So one obtains from Theorem \ref{thmopt} that there exists a $\hat{Z} \in E$ such that
$\N{\hat{Z}} = \essinf_{Z \in E} \N{Z}$. This shows that there exist
$\hat{X} \in C$ and $\hat{Y} \in D$ satisfying \eqref{minXY}.
If $C$ and $D$ are $L^0$-convex, then so is $E$, and for every $Z \in E$
satisfying $\N{Z} = \N{\hat{Z}}$, one has $(Z+\hat{Z})/2 \in E$ and
$\N{(Z+\hat{Z})/2} < \N{\hat{Z}}$ on the set $\crl{Z \neq \hat{Z}}$. It follows that
$\mu[Z \neq \hat{Z}] = 0$, and the proof is complete.
\end{proof}

\setcounter{equation}{0}
\section{Interior, relative interior and $L^0$-open sets}
\label{sec:open}

\begin{definition}
Let $C$ be a non-empty subset of $(L^0)^d$ and $A \in {\cal F}_+$. 

\begin{itemize}
\item
For $X \in (L^0)^d$ and $\varepsilon \in L^0_{++}$, we denote 
$$
B^{\varepsilon}_A(X) := \crl{Y \in 1_A(L^0)^d : 1_A \N{Y-X} \le \varepsilon}.
$$
\item
The interior $\inn_A (C)$ of $C$ on $A$ consists of 
elements $X \in 1_A C$ for which there exists an 
$\varepsilon \in L^0_{++}$ such that $B^{\varepsilon}_A(X) \subseteq 1_A C$.
If $A = \Omega$, we just write $\inn(C)$ for $\inn_A(C)$.
\item
The relative interior $\ri_A (C)$ of $C$ on $A$ consists of 
elements $X \in 1_A C$ for which there exists an $\varepsilon \in L^0_{++}$  such that 
$B^{\varepsilon}_A(X) \cap \aff_A(C) \subseteq 1_A(C)$.
If $A = \Omega$, we write $\ri(C)$ instead of $\ri_A(C)$.
\item
We say $C$ is $L^0$-open on $A$ if $1_A C = \inn_A(C)$. We call it $L^0$-open if it is $L^0$-open
on $\Omega$.
\end{itemize}
\end{definition}

Note that one always has $1_A \inn(C) \subseteq \inn_A(C)$ but not necessarily the other way around.
The collection of all $L^0$-open subsets of $(L^0)^d$ forms a topology. It is studied in 
Filipovi\'c et al. (2009) and is related to $(\varepsilon,\lambda)$-topologies 
on random locally convex modules (see Guo, 2010). We point out that sequentially closed sets 
in $(L^0)^d$ are different from complements of $L^0$-open sets. But 
one has the following relation between the two:

\begin{lemma}\label{lemmaopenclosed}
Let $C$ be a $\sigma$-stable subset of $(L^0)^d$. Then 
$\cl(C) \cap \inn(C^c) = \emptyset$.
\end{lemma}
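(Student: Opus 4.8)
The plan is to argue by contradiction: suppose there exists a point $X$ in $\cl(C) \cap \inn(C^c)$. Since $X \in \inn(C^c)$, there is an $\varepsilon \in L^0_{++}$ with $B^{\varepsilon}_{\Omega}(X) \subseteq C^c$, i.e.\ every $Y \in (L^0)^d$ with $\N{Y-X} \le \varepsilon$ lies outside $C$. On the other hand, $X \in \cl(C) = \lim(C)$ by Proposition \ref{proplimC}, so there is a sequence $(X_n)_{n \in \mathbb{N}}$ in $C$ converging a.e.\ to $X$. The naive hope is that $\N{X_n - X} \le \varepsilon$ eventually, which would put some $X_n$ into $B^{\varepsilon}_{\Omega}(X) \cap C$, contradicting $B^{\varepsilon}_{\Omega}(X) \subseteq C^c$. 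The subtlety is that a.e.\ convergence gives no uniform control: for each $\omega$ the index after which $\N{X_n(\omega) - X(\omega)} \le \varepsilon(\omega)$ depends on $\omega$, so no single $n$ need work on all of $\Omega$ at once.

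The fix is to use $\sigma$-stability of $C$ to paste together tails of the sequence measurably. First I would set, for each $n \in \mathbb{N}$,
$$
D_n := \crl{\N{X_n - X} \le \varepsilon} \setminus \bigcup_{m=1}^{n-1} D_m \in {\cal F},
$$
so that the $D_n$ are pairwise disjoint. Because $X_n \to X$ a.e., for a.e.\ $\omega$ there is some $n$ with $\N{X_n(\omega)-X(\omega)} \le \varepsilon(\omega)$, hence $\bigcup_{n \in \mathbb{N}} D_n = \Omega$ (up to a null set, which is how set identities are read in this paper). Now define
$$
Y := \sum_{n \in \mathbb{N}} 1_{D_n} X_n.
$$
Since $C$ is $\sigma$-stable and each $X_n \in C$, we get $Y \in C$. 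But on each $D_n$ we have $\N{Y - X} = \N{X_n - X} \le \varepsilon$, hence $\N{Y-X} \le \varepsilon$ on all of $\Omega$, i.e.\ $Y \in B^{\varepsilon}_{\Omega}(X)$. This contradicts $B^{\varepsilon}_{\Omega}(X) \subseteq C^c$, and the lemma follows.

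The one point that needs a little care — and which I expect to be the main (minor) obstacle — is the measurability and a.e.-covering claim for the sets $D_n$: one must check that $\crl{\N{X_n - X} \le \varepsilon} \in {\cal F}$ (clear, since $\N{X_n-X}$ and $\varepsilon$ are measurable) and, more to the point, that the a.e.\ convergence $X_n \to X$ together with $\varepsilon > 0$ a.e.\ really does force $\bigcup_n \crl{\N{X_n-X}\le\varepsilon} = \Omega$ a.e. This is immediate from the pointwise definition of a.e.\ convergence: outside a null set, $\N{X_n(\omega)-X(\omega)} \to 0$ while $\varepsilon(\omega) > 0$, so the inequality holds for all large $n$. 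Everything else is the routine $\sigma$-stable pasting argument used repeatedly in Section \ref{sec:algebraic}.
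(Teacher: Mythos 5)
Your proof is correct and is essentially identical to the paper's: your sets $D_n$ are exactly the level sets $\{N=n\}$ of the measurable index $N(\omega):=\min\{n:\N{X_n(\omega)-X(\omega)}\le\varepsilon(\omega)\}$ that the paper uses, and your $Y$ is the paper's $X_N$. The $\sigma$-stable pasting and the resulting contradiction with $B^{\varepsilon}_{\Omega}(X)\subseteq C^c$ match the published argument step for step.
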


\begin{proof}
Assume $X \in \cl(C) \cap \inn(C^c)$. By Proposition \ref{proplimC}, there exists a sequence 
$(X_n)_{n \in \mathbb{N}}$ in $C$ such that $X_n \to X$ a.e. On the other hand, there is an
$\varepsilon \in L^0_{++}$ such that $Y \in C^c$ for every $Y \in (L^0)^d$ satisfying $\N{X-Y} \le \varepsilon$.
$N(\omega) := \min \crl{n \in \mathbb{N} : \N{X_n(\omega) - X(\omega)} \le \varepsilon(\omega)}$
is an element of $\mathbb{N}({\cal F})$, and since $C$ is $\sigma$-stable, $X_{N}$ belongs to $C$.
But at the same time one has $\N{X_N-X} \le \varepsilon$, implying $X_N \in C^c$.
This yields a contradiction. So $\cl(C) \cap \inn(C^c) = \emptyset$.
\end{proof}

\begin{lemma} \label{lemmalambdaint} 
Let $C$ be a non-empty $L^0$-convex subset of $(L^0)^d$, $A \in {\cal F}_+$ and
$\lambda \in L^0$ such that $0 < \lambda \le 1$. Then
\be \label{innCC}
\lambda X + (1-\lambda) Y \in \inn_A(C) \quad 
\mbox{for all } X \in \inn_A(C), \;Y \in 1_A C
\ee
and
\be \label{riCC}
\lambda X + (1-\lambda) Y \in \ri_A(C) \quad 
\mbox{for all } X \in \ri_A(C), \; Y \in 1_A C.
\ee
If in addition, $C$ is $\sigma$-stable, then \eqref{innCC} and \eqref{riCC} also hold for
$Y \in \cl_A(C)$.
\end{lemma}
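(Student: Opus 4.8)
The plan is to mimic the classical proof from finite-dimensional convex analysis, taking care that all steps are carried out $\mu$-almost everywhere and that the conditional ball $B^\varepsilon_A(\,\cdot\,)$ behaves well under the affine maps involved. First I would prove \eqref{innCC}. Fix $X \in \inn_A(C)$ with an associated $\varepsilon \in L^0_{++}$ so that $B^\varepsilon_A(X) \subseteq 1_A C$, and fix $Y \in 1_A C$. Set $W := \lambda X + (1-\lambda)Y$. I claim $B^{\lambda\varepsilon}_A(W) \subseteq 1_A C$. Indeed, given $Z \in 1_A(L^0)^d$ with $1_A\N{Z-W} \le \lambda\varepsilon$, write $Z = \lambda X' + (1-\lambda)Y$ where $X' := X + \lambda^{-1}(Z - W) \in 1_A(L^0)^d$ (here $\lambda^{-1} \in L^0$ since $\lambda > 0$). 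Then $1_A\N{X'-X} = \lambda^{-1}\,1_A\N{Z-W} \le \varepsilon$, so $X' \in B^\varepsilon_A(X) \subseteq 1_A C$, and hence $Z = \lambda X' + (1-\lambda)Y \in 1_A C$ by $L^0$-convexity of $1_A C$ (which follows from the $L^0$-convexity of $C$). Since $\lambda\varepsilon \in L^0_{++}$, this shows $W \in \inn_A(C)$.

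Next I would prove \eqref{riCC} by essentially the same computation, but intersecting with $\aff_A(C)$ at each step. Fix $X \in \ri_A(C)$ with $\varepsilon \in L^0_{++}$ such that $B^\varepsilon_A(X) \cap \aff_A(C) \subseteq 1_A C$, and fix $Y \in 1_A C \subseteq \aff_A(C)$. Put $W := \lambda X + (1-\lambda)Y$; note $W \in \aff_A(C)$. Given $Z \in B^{\lambda\varepsilon}_A(W) \cap \aff_A(C)$, define $X' := X + \lambda^{-1}(Z-W)$ as above. Since $Z$ and $W$ lie in $\aff_A(C)$ and $\aff_A(C) - W$ is $L^0$-linear, $X' = X + \lambda^{-1}(Z - W)$ lies in $\aff_A(C)$; and $1_A\N{X'-X} \le \varepsilon$ as before, so $X' \in B^\varepsilon_A(X) \cap \aff_A(C) \subseteq 1_A C$. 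Then $Z = \lambda X' + (1-\lambda)Y \in 1_A C$, proving $W \in \ri_A(C)$.

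Finally, for the last assertion, suppose $C$ is $\sigma$-stable and $Y \in \cl_A(C)$. By Proposition \ref{proplimC}, $Y = \lim_A(C)$, so there is a sequence $(Y_n)$ in $1_A C$ with $Y_n \to Y$ a.e. The idea is to reduce to the already-proved cases by perturbing $X$ slightly into the interior along the segment towards $Y_n$. Concretely, for a fixed $X \in \inn_A(C)$ (resp. $\ri_A(C)$) with radius $\varepsilon$, I would choose, for each $n$, a scalar $\mu_n \in L^0$ with $0 < \mu_n \le 1$ small enough (measurably, e.g. $\mu_n := \min\{\lambda/2,\, \varepsilon /(1 + \N{Y_n - Y})\} \wedge 1$ up to constants) so that $X_n := \mu_n Y_n + (1-\mu_n) X$ still satisfies $1_A\N{X_n - X}$ small, hence $X_n \in \inn_A(C)$ by a direct estimate or by \eqref{innCC}; one then checks that $\lambda X + (1-\lambda)Y_n \to \lambda X + (1-\lambda)Y =: W$ a.e., and that $W$ is obtained from an interior point of $C$ and a point of $1_A C$ via a convex combination with a strictly positive $L^0$-coefficient bounded away from $0$ on the relevant sets, so $W \in \inn_A(C)$ (resp. $\ri_A(C)$) by the first parts. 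The cleanest route is: write $W = \tfrac12\big(\lambda X + (1-\lambda)Y\big) + \tfrac12 W$ is circular, so instead express $W$ as $\nu X + (1-\nu)\big[\text{a point of }1_A C\big]$ after absorbing the limit — this is the step requiring care.

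The main obstacle I anticipate is precisely this last reduction: one must produce, from $X \in \inn_A(C)$ and $Y \in \cl_A(C)$, a genuine convex combination exhibiting $W = \lambda X + (1-\lambda)Y$ as lying in $\inn_A(C)$, and the naive approach "$Y_n$ is close to $Y$, so $\lambda X + (1-\lambda)Y_n$ is close to $W$ and in $\inn_A(C)$" does not immediately work because $\inn_A(C)$ need not be sequentially closed. The fix is to shrink the coefficient: pick $0 < \delta < \lambda$ (a constant), note $W$ lies on the open segment between the interior point $X' := \tfrac{\lambda-\delta}{1-\delta}X + \tfrac{1-\lambda}{1-\delta}Y$ — wait, that still involves $Y$; the correct device is to observe $W = \delta X + (1-\delta)\,Y'$ where $Y' := \tfrac{\lambda-\delta}{1-\delta}X + \tfrac{1-\lambda}{1-\delta}Y$, and $Y' \in \cl_A(C)$ (as a convex combination of $X \in 1_A C$ and $Y \in \cl_A(C)$, using that $\cl_A(C)$ is $L^0$-convex by Corollary \ref{corclstable}). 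Then it suffices to show: if $X \in \inn_A(C)$, $Y' \in \cl_A(C)$ and $\delta \in L^0$ with $0 < \delta \le 1$, then $\delta X + (1-\delta) Y' \in \inn_A(C)$; and this follows because approximating $Y'$ by $Y'_n \in 1_A C$ a.e., the points $\delta X + (1-\delta)Y'_n$ lie in $\inn_A(C)$ with a \emph{uniform} radius $\delta\varepsilon$ (by the proof of \eqref{innCC}), and the ball $B^{\delta\varepsilon/2}_A(\delta X + (1-\delta)Y') \subseteq B^{\delta\varepsilon}_A(\delta X + (1-\delta)Y'_n) \subseteq 1_A C$ once $1_A\N{(1-\delta)(Y'_n - Y')} \le \delta\varepsilon/2$, which holds for $n$ large enough on each set of finite measure — and then one patches over a $\sigma$-finite exhaustion using $\sigma$-stability, exactly as in the proof of Proposition \ref{proplimC}. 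The relative-interior case is handled verbatim with $\aff_A(C)$ inserted throughout, using that $\aff_A(C)$ is $\sigma$-stable (Corollary \ref{coraffsigma}) and closed, so the approximating points stay in it.
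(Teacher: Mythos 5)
Your proofs of \eqref{innCC} and \eqref{riCC} are exactly the paper's: write a point $Z$ of the ball $B^{\lambda\varepsilon}_A(W)$ around $W=\lambda X+(1-\lambda)Y$ as $\lambda\brak{X+\lambda^{-1}(Z-W)}+(1-\lambda)Y$ and use $L^0$-convexity, inserting $\aff_A(C)$ throughout for the relative interior. The problems are in the closure step. First, the detour through $Y':=\frac{\lambda-\delta}{1-\delta}X+\frac{1-\lambda}{1-\delta}Y$ is circular: the statement you then say "it suffices to show" (namely $\delta X+(1-\delta)Y'\in\inn_A(C)$ for $X\in\inn_A(C)$, $Y'\in\cl_A(C)$, $0<\delta\le 1$) is literally the assertion being proved, so the reduction buys nothing and should be dropped. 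Second, the claim that $(1-\delta)\N{Y'_n-Y'}\le\delta\varepsilon/2$ "holds for $n$ large enough on each set of finite measure" is false as stated: a.e.\ convergence only gives a threshold depending on $\omega$, unbounded even on sets of finite measure (and $\varepsilon$ is merely in $L^0_{++}$), so no single $n$ works. The correct repair --- which is what the paper does, and what your phrase "patch using $\sigma$-stability" needs to be unpacked into --- is to select a single measurable index $N(\omega):=\min\crl{n\in\mathbb{N} : (1-\lambda(\omega))\N{Y_n(\omega)-Y(\omega)}\le\lambda(\omega)\varepsilon(\omega)}\in\mathbb{N}({\cal F})$, observe that $Y_N\in 1_AC$ by $\sigma$-stability, and then, starting from a radius $2\varepsilon$ with $B^{2\varepsilon}_A(X)\subseteq 1_AC$, write for $Z\in B^{\lambda\varepsilon}_A(0)$ the identity $\lambda X+(1-\lambda)Y+Z=\lambda\brak{X+\frac{1-\lambda}{\lambda}(Y-Y_N)+\frac{1}{\lambda}Z}+(1-\lambda)Y_N$, whose first argument lies in $B^{2\varepsilon}_A(X)$. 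The Borel--Cantelli and $\sigma$-finite exhaustion argument of Proposition \ref{proplimC} that you invoke is not the relevant tool here. With this substitution your argument coincides with the paper's, including the relative-interior variant.
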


\begin{proof}
Let $X \in \inn_A(C)$ and $Y \in 1_A C$. 
There exists an $\varepsilon \in L^0_{++}$ such that $B^{\varepsilon}_A(X)$ is contained in $1_AC$.
So 
$$
\lambda X + (1-\lambda)Y + Z = \lambda(X + Z/\lambda) + (1-\lambda) Y \subseteq 1_A C 
$$
for all $Z \in B^{\varepsilon \lambda}_A(0)$.
This shows \eqref{innCC}. 

To prove \eqref{riCC}, we assume that $X \in \ri_A(C)$ and $Y \in 1_A C$.
There exists an $\varepsilon \in L^0_{++}$ 
such that $B^{\varepsilon}_A(X) \cap \aff_A(C) \subseteq 1_A C$. Choose 
$Z \in B^{\varepsilon \lambda}_A(0)$ such that 
$$
\lambda X + (1-\lambda)Y + Z \in \aff_A(C).
$$
Then $X + Z/\lambda \in \aff_A(C)$, and therefore $X + Z/\lambda \in 1_A C$.
It follows that
$$
\lambda X + (1-\lambda)Y + Z = \lambda(X + Z/\lambda) + (1-\lambda) Y \subseteq 1_A C.
$$
This shows \eqref{riCC}.

If $C$ is $\sigma$-stable, $X \in \inn_A(C)$ and $Y \in \cl_A(C)$, there exists an
$\varepsilon \in L^0_{++}$ such that $B^{2 \varepsilon}_A(X) \subseteq 1_A C$. 
From Lemma \ref{proplimC} we know that there exists a sequence 
$(Y_n)_{n \in \mathbb{N}}$ in $1_A C$ converging a.e. to $Y$.
$N(\omega) := \min \crl{n \in \mathbb{N} : (1-\lambda(\omega)) \N{Y(\omega)-Y_n(\omega)} \le \lambda(\omega)
\varepsilon(\omega)}$ belongs to 
$\mathbb{N}({\cal F})$, and $Y_N$ is an element of $C$ satisfying $(1-\lambda) \N{Y-Y_N} \le \lambda \varepsilon$.
So for $Z \in B^{\lambda \varepsilon}_A(0)$, one has
$$
\lambda X + (1-\lambda)Y + Z =
\lambda \brak{X + \frac{(1-\lambda)}{\lambda} (Y-Y_N) + \frac{1}{\lambda} Z} + (1-\lambda) Y_N \in 1_A C,
$$
which shows that $\lambda X + (1-\lambda)Y \in \inn_A(C)$.

If $X$ is in $\ri_A(C)$ instead of $\inn_A(C)$, there exists an
$\varepsilon \in L^0_{++}$ such that $B^{2 \varepsilon}_A(X) \cap \aff_A(C) \subseteq 1_A C$. 
Let $Z \in B^{\lambda \varepsilon}_A(0)$ such that 
$$
\lambda X + (1-\lambda)Y + Z \in \aff_A(C),
$$
then 
$$
X + \frac{(1-\lambda)}{\lambda} (Y-Y_N) + \frac{1}{\lambda} Z \in \aff_A(C).
$$
Hence
$$
X + \frac{(1-\lambda)}{\lambda} (Y-Y_N) + \frac{1}{\lambda} Z \in 1_A C,
$$
and it follows that
$$
\lambda X + (1-\lambda)Y + Z =
\lambda \brak{X + \frac{(1-\lambda)}{\lambda} (Y-Y_N) + \frac{1}{\lambda} Z} + (1-\lambda) Y_N \in 1_A C.
$$
So $\lambda X + (1-\lambda)Y \in \ri_A(C)$, and the proof is complete.
\end{proof}

\begin{corollary}
Let $C$ be an $L^0$-convex subset of $(L^0)^d$ and $A \in {\cal F}_+$. 
Then $\inn_A(C)$ and $\ri_A(C)$ are again $L^0$-convex.
\end{corollary}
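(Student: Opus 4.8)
The statement to prove is that for an $L^0$-convex subset $C$ of $(L^0)^d$ and $A \in {\cal F}_+$, both $\inn_A(C)$ and $\ri_A(C)$ are again $L^0$-convex. The natural strategy is to reduce the claim directly to Lemma \ref{lemmalambdaint}, which already packages the essential geometric content: a proper convex combination of an (relative-)interior point with any point of $1_A C$ stays in the (relative-)interior. So first I would fix $X, Y \in \inn_A(C)$ and $\lambda \in L^0$ with $0 \le \lambda \le 1$, and try to show $Z := \lambda X + (1-\lambda) Y \in \inn_A(C)$.

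The subtlety is that $\lambda$ is an $L^0$-variable, so $\lambda$ can equal $0$ on some set and $1$ on another, and Lemma \ref{lemmalambdaint} as stated requires $0 < \lambda \le 1$ globally. I would handle this by splitting $A$ according to the values of $\lambda$: let $A_0 := A \cap \{\lambda = 0\}$ and $A_1 := A \setminus A_0 = A \cap \{\lambda > 0\}$. On $A_0$ one has $1_{A_0} Z = 1_{A_0} Y \in 1_{A_0} C$, and since $Y \in \inn_A(C)$ there is an $\eps \in L^0_{++}$ with $B^\eps_A(Y) \subseteq 1_A C$, which restricts to give $1_{A_0} Z \in \inn_{A_0}(C)$ in the obvious sense. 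On $A_1$, the element $1_{A_1}\lambda$ is $> 0$, so Lemma \ref{lemmalambdaint} applied with the set $A_1$ in place of $A$ (using $1_{A_1} X \in \inn_{A_1}(C)$, which follows from $X \in \inn_A(C)$ by restriction) gives $1_{A_1} Z \in \inn_{A_1}(C)$. Then I would glue the two pieces: choose the radii $\eps_0, \eps_1 \in L^0_{++}$ witnessing membership on $A_0$ and $A_1$ respectively, set $\eps := 1_{A_0}\eps_0 + 1_{A_1}\eps_1 \in L^0_{++}$, and check that $B^\eps_A(Z) \subseteq 1_A C$ by testing the defining inequality $1_A \N{Y' - Z} \le \eps$ separately on $A_0$ and $A_1$. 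This yields $Z \in \inn_A(C)$.

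For the relative interior the argument is identical, simply replacing $1_A C$ by $B^\eps_A(X) \cap \aff_A(C)$ throughout and invoking the $\ri$-part \eqref{riCC} of Lemma \ref{lemmalambdaint} instead of \eqref{innCC}; one uses that $\aff_A(C)$ restricted to a subset $B \subseteq A$ is $\aff_B(C)$, which is immediate from the description of $\aff$ in Remark \ref{Rem1}.

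**Main obstacle.** The only real point of care is the bookkeeping around the $L^0$-valued coefficient $\lambda$ — in particular making sure the radii $\eps$ produced on the two pieces $A_0$ and $A_1$ can be combined into a single strictly positive $L^0$-function, and that the ball $B^\eps_A(\cdot)$ behaves well under the decomposition $A = A_0 \cup A_1$ (it does, because the defining condition $1_A\N{\cdot} \le \eps$ is pointwise in $\omega$). Everything else is a routine transcription of the classical finite-dimensional fact "the convex combination of an interior point and a point of the set is interior" into the conditional setting, with Lemma \ref{lemmalambdaint} doing the substantive work.
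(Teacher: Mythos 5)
Your proposal is correct and follows essentially the same route as the paper: decompose $A$ into $\crl{\lambda = 0}$ and $\crl{\lambda > 0}$, apply Lemma \ref{lemmalambdaint} on the latter piece, observe that the combination reduces to $Y$ on the former, and glue using stability (the paper phrases the gluing via the identity $\lambda X + (1-\lambda)Y = 1_{\crl{\lambda>0}}(\lambda X + (1-\lambda)Y) + 1_{\crl{\lambda=0}}Y$ rather than by combining radii, but this is the same idea). The only cosmetic point is that your glued radius $1_{A_0}\eps_0 + 1_{A_1}\eps_1$ vanishes on $\Omega\setminus A$ and so should be augmented by $1_{\Omega\setminus A}$ to lie in $L^0_{++}$.
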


\begin{proof}
Since $C$ is stable, it follows from Lemma \ref{lemmalambdaint} that for 
$X,Y \in \inn_A(C)$ and $\lambda \in L^0$ satisfying $0 \le \lambda \le 1$, one has
$$
\lambda X + (1-\lambda) Y = 
1_{\crl{\lambda > 0}} (\lambda X + (1-\lambda) Y) + 1_{\crl{\lambda = 0}} Y \in \inn_A(C).
$$
This shows that $\inn_A(C)$ is $L^0$-convex. The same argument shows that 
$\ri_A(C)$ is $L^0$-convex.
\end{proof}


\begin{definition}
Let $A \in {\cal F}_+$. We call a subset $C$ of $(L^0)^d$  
\begin{itemize}
\item an $L^0$-hyperplane on $A$ if $1_A C= \crl{X \in 1_A (L^0)^d : \ang{X,Z} = V}$
\item an $L^0$-halfspace on $A$ if $1_A C= \crl{X \in 1_A(L^0)^d : \ang{X,Z} \ge V}$
\end{itemize}
for some $V \in 1_A L^0$ and $Z \in 1_A (L^0)^d$ such that $\N{Z} > 0$ on $A$.
\end{definition}

\begin{lemma} \label{lemmahalfspace}
A subset $C$ of $(L^0)^d$ is an $L^0$-hyperplane on $A \in {\cal F}_+$ 
if and only if there exist $X_0 \in 1_A(L^0)^d$ and an orthonormal 
basis $X_1, \ldots, X_d$ of $(L^0)^d$ on $A$ such that 
\be \label{hyperplane}
1_A C = \crl{X_0 + \sum_{i=1}^{d-1} \lambda_i X_i : \lambda_i \in 1_A L^0}.
\ee
Similarly, $C$ is an $L^0$-halfspace on $A \in {\cal F}_+$ if and only if
there exist $X_0 \in 1_A(L^0)^d$ and an orthonormal 
basis $X_1, \ldots, X_d$ of $(L^0)^d$ on $A$ such that 
\be \label{halfspace}
1_A C = \crl{X_0 + \sum_{i=1}^{d} \lambda_i X_i : \lambda_i \in 1_A L^0, \, \lambda_d \ge 0}.
\ee
\end{lemma}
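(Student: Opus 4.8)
The plan is to prove both equivalences by reducing to the defining forms of $L^0$-hyperplanes and $L^0$-halfspaces and invoking the orthonormal completion result, Corollary~\ref{corspan}. I treat the hyperplane case first; the halfspace case is entirely parallel.

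First I would prove the ``only if'' direction for the hyperplane. Suppose $1_A C = \crl{X \in 1_A(L^0)^d : \ang{X,Z} = V}$ for some $V \in 1_A L^0$ and $Z \in 1_A(L^0)^d$ with $\N{Z} > 0$ on $A$. Put $X_d := 1_A Z/\N{Z}$, which is a unit vector on $A$. The $1_A L^0$-submodule $1_A \lin_A\crl{X_d}$ is $\sigma$-stable and $L^0$-linear in $1_A(L^0)^d$, so by Corollary~\ref{corspan} (applied over the measure space restricted to $A$, i.e. working in the ring $1_A L^0$) there exist $X_1, \dots, X_{d-1} \in 1_A(L^0)^d$ such that $X_1, \dots, X_d$ is an orthonormal basis of $(L^0)^d$ on $A$ --- concretely, $\crl{X_d}^{\perp}$ restricted to $A$ is a free module of rank $d-1$ with an orthonormal basis $X_1, \dots, X_{d-1}$, and appending $X_d$ gives the claim. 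Now choose $X_0 := 1_A (V/\N{Z}) X_d$; then $\ang{X_0, Z} = V$ on $A$, so $X_0 \in 1_A C$. For any $X \in 1_A(L^0)^d$ write $X = X_0 + \sum_{i=1}^{d} \lambda_i X_i$ with $\lambda_i = \ang{X - X_0, X_i} \in 1_A L^0$; since $\ang{X,Z} - V = \ang{X - X_0, Z} = \N{Z}\, \lambda_d$ and $\N{Z} > 0$ on $A$, we get $X \in 1_A C \iff \lambda_d = 0$, which is exactly \eqref{hyperplane}. Conversely, given the representation \eqref{hyperplane}, set $Z := X_d$ and $V := \ang{X_0, X_d}$. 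Then $\N{Z} = 1_A > 0$ on $A$, and for $X = X_0 + \sum_{i=1}^{d} \lambda_i X_i$ one has $\ang{X,Z} - V = \lambda_d$, so $\ang{X,Z} = V$ holds precisely when $\lambda_d = 0$, i.e. when $X$ lies in the right-hand side of \eqref{hyperplane}. Hence $C$ is an $L^0$-hyperplane on $A$.

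For the halfspace statement the argument is the same up to bookkeeping: given $1_A C = \crl{X : \ang{X,Z} \ge V}$, build the orthonormal basis with $X_d = 1_A Z/\N{Z}$ and $X_0 = 1_A(V/\N{Z})X_d$ as above; then $X = X_0 + \sum_{i=1}^{d}\lambda_i X_i \in 1_A C$ iff $\N{Z}\lambda_d \ge 0$ iff $\lambda_d \ge 0$, giving \eqref{halfspace}. The converse direction again takes $Z = X_d$, $V = \ang{X_0, X_d}$ and computes $\ang{X,Z} - V = \lambda_d$, so $\ang{X,Z} \ge V \iff \lambda_d \ge 0$.

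The only genuinely non-routine point is the appeal to Corollary~\ref{corspan} to obtain an orthonormal completion of the single unit vector $X_d$ to an orthonormal basis of $(L^0)^d$ on $A$; the rest is linear-algebra bookkeeping with inner products, using that $\N{Z} \in 1_A L^0_{++}$ lets one divide on $A$. One should be mildly careful that all computations are localized on $A$ (every vector carries a factor $1_A$, and ``orthonormal basis on $A$'' is the notion from the Definition preceding Lemma~\ref{lemmaspan}), but no $\omega$-wise selection or further structural input is needed, so I expect the write-up to be short.
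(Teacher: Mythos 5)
Your proof is correct and follows essentially the same route as the paper's: both set $X_d = 1_A Z/\N{Z}$, invoke Corollary~\ref{corspan} to complete it to an orthonormal basis of $(L^0)^d$ on $A$ with $1_A Z^{\perp} = \lin_A\crl{X_1,\dots,X_{d-1}}$, pick $X_0$ with $\ang{X_0,Z}=V$, and read off the coordinate $\lambda_d$. You merely spell out the coordinate computation $\ang{X,Z}-V = \N{Z}\lambda_d$ that the paper leaves implicit.
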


\begin{proof}
If $1_A C$ is of the form \eqref{hyperplane}, then $1_A C = \crl{X \in 1_A (L^0)^d : \ang{X,X_d} =  \ang{X_0,X_d}}$.
Now assume that $1_A C= \crl{X \in 1_A (L^0)^d : \ang{X,Z} = V}$
for some $V \in 1_A L^0$ and $Z \in 1_A (L^0)^d$ such that $\N{Z} > 0$ on $A$. By 
Corollary \ref{corspan}, there exists an orthonormal
basis $X_1, \dots, X_d$ of $(L^0)^d$ on $A$ such that $1_A Z^{\perp} = \lin_A \crl{X_1, \dots, X_{d-1}}$
and $X_d = 1_A Z/\N{Z}$. Choose $X_0 \in 1_A(L^0)^d$ such that $\ang{X_0,Z} = V$.
Then $1_A C$ is of the form \eqref{hyperplane}. That $C$ is an $L^0$-halfspace on $A \in {\cal F}_+$ 
if and only if $1_A C$ is of the form \eqref{halfspace} follows similarly.
\end{proof}

\begin{lemma} \label{lemmaintaff}
Let $C$ be a $\sigma$-stable $L^0$-convex subset of $(L^0)^d$ and $A \in {\cal F}_+$. Then
$\inn_A(C) \not= \emptyset$ if and only if $\aff_A(C) = 1_A(L^0)^d$.
\end{lemma}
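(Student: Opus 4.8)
The plan is to prove the two implications separately, working throughout on the set $A$ and using the structural result Corollary \ref{corspan} together with the decomposition provided by Theorem \ref{thmspan} for the $\sigma$-stable affine set $\aff_A(C)$ (which is $\sigma$-stable by Corollary \ref{coraffsigma}).

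For the easy direction, suppose $X \in \inn_A(C)$, so that $B^{\varepsilon}_A(X) \subseteq 1_A C$ for some $\varepsilon \in L^0_{++}$. Then $X + \varepsilon e_i \in 1_A C \subseteq \aff_A(C)$ and $X - \varepsilon e_i \in \aff_A(C)$ for each $i = 1, \dots, d$, hence by $L^0$-affineness $e_i = \frac{1}{2\varepsilon}\big((X+\varepsilon e_i) - (X-\varepsilon e_i)\big)$ lies in $\aff_A(C) - \aff_A(C) = \lin_A(C - X)$; translating back, $X + \lambda_1 e_1 + \dots + \lambda_d e_d \in \aff_A(C)$ for all $\lambda_i \in 1_A L^0$, which gives $\aff_A(C) = 1_A(L^0)^d$.

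For the converse, assume $\aff_A(C) = 1_A(L^0)^d$. Pick $X_0 \in 1_A C$ and set $D = C - X_0$, so $\aff_A(D) = 1_A(L^0)^d$ and it suffices to find an interior point of $D$ on $A$. By Remark \ref{Rem1}, every element of $\aff_A(D)$ is a finite combination $\sum_{n=1}^k \lambda_n Y_n$ with $Y_n \in 1_A D$, $\lambda_n \in 1_A L^0$, $\sum \lambda_n = 1_A$; in particular each unit vector $1_A e_j$ can be so written. Using $\sigma$-stability of $D$ and a largest-set/exhaustion argument as in Lemma \ref{lemmalargestset} and Theorem \ref{thmspan}, I would extract a single finite family $Z_1, \dots, Z_m \in 1_A D$ (together with a partition of $A$ into sets where $m$ is constant) whose $L^0$-affine combinations already span $1_A(L^0)^d$, i.e.\ $\aff_A\{Z_1,\dots,Z_m\} = 1_A(L^0)^d$ on $A$; since affine hulls of finitely many points are $\sigma$-stable, this is consistent. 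By Corollary \ref{corkl} applied to the differences $Z_i - Z_1$, on $A$ one can select $d+1$ points among the $Z_i$ (after possibly a finite refinement of the partition, glued back by $\sigma$-stability) that are affinely independent on $A$, i.e.\ $P_0, \dots, P_d \in 1_A D$ with $P_1 - P_0, \dots, P_d - P_0$ linearly independent on $A$. The candidate interior point is the barycenter $X := \frac{1}{d+1}\sum_{i=0}^d P_i$, which lies in $1_A C$ (after translating back) by $L^0$-convexity.

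It remains to show $X \in \inn_A(C)$. Writing $B := [P_1 - P_0, \dots, P_d - P_0]$ as an $L^0$-valued $d \times d$ matrix which is invertible on $A$ (linear independence on $A$ forces $1_A \det B$ invertible in $1_A L^0$, again via Corollary \ref{corkl}), any $Y \in 1_A(L^0)^d$ can be written $Y = P_0 + \sum_{i=1}^d \mu_i (P_i - P_0)$ with $\mu = B^{-1}(Y - P_0) \in 1_A(L^0)^d$, and $Y$ lies in the $L^0$-simplex spanned by $P_0,\dots,P_d$ — hence in $1_A C$ — provided $\mu_i \ge 0$ and $\sum_i \mu_i \le 1$ on $A$. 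Since $\mu$ depends $L^0$-linearly hence sequentially continuously (Proposition \ref{propLipschitz}/Corollary \ref{coraffcont}) on $Y$, and at $Y = X$ one has $\mu_i = \frac{1}{d+1}$ for all $i$, there is $\varepsilon \in L^0_{++}$ (obtained as an explicit fraction of $1$ over the operator norm $\N{B^{-1}} \in L^0_+$) such that $\N{Y - X} \le \varepsilon$ on $A$ forces all these constraints, so $B^{\varepsilon}_A(X) \subseteq 1_A C$.

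The main obstacle is the bookkeeping in the converse direction: passing from "every element of $1_A(L^0)^d$ is \emph{some} finite affine combination of elements of $1_A C$" to "there is \emph{one} fixed affinely-independent $(d{+}1)$-tuple in $1_A C$ spanning $1_A(L^0)^d$ on $A$." This is exactly the kind of statement where one must avoid $\omega$-wise selection and instead iterate the largest-set construction (Lemma \ref{lemmalargestset}) finitely often — at most $d+1$ times — using $\sigma$-stability of $C$ to glue the local choices into global ones, and Corollary \ref{corkl} to bound the number of independent vectors by $d$. Once the $(d{+}1)$-tuple is in hand, the barycenter argument and the quantitative estimate for $\varepsilon$ in terms of $\N{B^{-1}}$ are routine.
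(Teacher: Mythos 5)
Your proposal is correct and follows essentially the same route as the paper: the easy direction by translating so that $0$ becomes an interior point of $C-X_0$, and the converse by applying Theorem \ref{thmspan} to the $\sigma$-stable set $C-X_0$ to produce $X_1,\dots,X_d\in 1_AC$ with $X_i-X_0$ a basis on $A$, and then verifying that the barycenter $\frac{1}{d+1}\sum_{i=0}^d X_i$ is an interior point of the conditional simplex $\conv_A\crl{X_0,\dots,X_d}\subseteq 1_AC$. The only (cosmetic) difference is the final verification: the paper realizes the simplex as an intersection of $d+1$ $L^0$-halfspaces via Corollary \ref{corspan} and Lemma \ref{lemmahalfspace}, whereas you control the barycentric coordinates directly through $\N{B^{-1}}\in L^0_+$ from Proposition \ref{propLipschitz}; both yield the required $\varepsilon\in L^0_{++}$.
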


\begin{proof}
Let us first assume that $X_0 \in \inn_A(C)$. Then $0 \in \inn_A(C-X_0)$, and it follows that 
$$
\aff_A(C) = \aff_A(C-X_0) +X_0 = \lin_A(C-X_0) + X_0 = 1_A(L^0)^d + X_0 = 1_A(L^0)^d.
$$
On the other hand, if $\aff_A(C) = 1_A(L^0)^d$, choose $X_0 \in 1_A C$. Then 
$$
\lin_A(C-X_0) = \aff_A(C-X_0) = \aff_A(C) - X_0 = 1_A(L^0)^d.
$$
So it follows from Theorem \ref{thmspan} that there exist $X_1, \dots, X_d$ in $1_A C$ such that 
$X_i-X_0$, $i = 1, \dots, d,$ form a basis of $(L^0)^d$ on $A$. Set
$$
\hat{X} := \frac{1}{d+1} \sum_{i=0}^d X_i.
$$
It follows from Corollary \ref{corspan} and Lemma \ref{lemmahalfspace} that
for every $i=0, \dots, d$, there exist $V_i \in L^0$ and $Z_i \in (L^0)^d$ such that for all $j \not= i$,
$$
\ang{\hat{X},Z_i} > V_i = \ang{X_j,Z_i} \mbox{ on } A.
$$
This shows that $\hat{X} \in \inn_A  \crl{X \in 1_A(L^0)^d : \ang{X,Z_i} \ge V_i}$ for all $i$, which implies 
$\hat{X} \in \inn_A(C)$ since
$$
\bigcap_{i=0}^d \crl{X \in 1_A(L^0)^d : \ang{X,Z_i} \ge V_i} = \conv_A \crl{X_0, \dots, X_d} \subseteq 1_A C.
$$
\end{proof}

\setcounter{equation}{0}
\section{Separation by $L^0$-hyperplanes}
\label{sec:sep}

In this section we prove results on the separation of two $L^0$-convex sets in
$(L^0)^d$ by an $L^0$-hyperplane. As a corollary we obtain a version of the 
Hahn--Banach extension theorem. Hahn--Banach extension and separation results
have been proved in more general modules; see e.g., Filipovi\'c et al. (2009), Guo (2010) 
and the references therein. However, due to the special form of $(L^0)^d$, we here are able to derive 
analogs of results that hold in $\mathbb{R}^d$ but not in infinite-dimensional vector spaces. Moreover, we do not 
need Zorn's lemma or the axiom of choice.

\begin{theorem} \label{thmsepcl} {\bf (Strong separation)}\\
Let $C$ and $D$ be non-empty $L^0$-convex subsets of $(L^0)^d$. Then there exists
$Z \in (L^0)^d$ such that
\be \label{sepcl}
\essinf_{X \in C} \ang{X,Z} > \esssup_{Y \in D} \ang{Y,Z}
\ee
if and only if $0 \notin \cl_A(C-D)$ for all $A \in {\cal F}_+$.
\end{theorem}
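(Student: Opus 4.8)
The plan is to prove the two directions separately; the ``only if'' direction is the easy one, so I would dispatch it first. If such a $Z$ exists, then setting $2\eps := \essinf_{X\in C}\ang{X,Z} - \esssup_{Y\in D}\ang{Y,Z} \in L^0_{++}$, one has $\ang{W,Z} \ge \eps$ on all of $\Omega$ for every $W \in C-D$. Since $W \mapsto \ang{W,Z}$ is $L^0$-linear, hence (by Corollary~\ref{coraffcont}) sequentially continuous, this inequality passes to a.e. limits, so $\ang{W,Z} \ge \eps$ for all $W \in \lim(C-D) = \cl(C-D)$, and more generally on $A$ for all $W \in \cl_A(C-D)$ whenever $A \in {\cal F}_+$. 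In particular $0 \notin \cl_A(C-D)$, because $\ang{0,Z} = 0 < \eps$ on $A$.

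For the ``if'' direction, set $E := C - D$, which is a non-empty $L^0$-convex subset of $(L^0)^d$, and let $\bar E := \cl(E)$. By Corollary~\ref{corclstable}, $\bar E$ is an $L^0$-convex, stable and sequentially closed set, and by hypothesis $0 \notin \bar E$ (take $A = \Omega$); moreover $0 \notin \cl_A(E) = 1_A\cl(E)$ for every $A \in {\cal F}_+$ by Corollary~\ref{corAcl}, i.e. $\N{W} > 0$ on all of $\Omega$ for every $W \in \bar E$. The idea is to project $0$ onto $\bar E$. To apply Corollary~\ref{cordist} (with $C$ replaced by $\bar E$ and $D$ by $\{0\}$) I need $\bar E$, or at least a suitable $L^0$-bounded ``slice'' of it, to be $L^0$-bounded; the standard trick is: pick any $W_0 \in \bar E$ and replace $\bar E$ by $\bar E \cap \{W : \N{W - 0} \le \N{W_0}\}$, which is still sequentially closed, stable and $L^0$-convex, is now $L^0$-bounded, and contains the nearest point to $0$. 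Corollary~\ref{cordist} then yields a unique $\hat W \in \bar E$ with $\N{\hat W} = \essinf_{W \in \bar E}\N{W}$, and $\N{\hat W} > 0$ everywhere. Set $Z := \hat W$. A conditional version of the usual Hilbert-space projection argument shows $\ang{W - \hat W, \hat W} \ge 0$ for all $W \in \bar E$: for $\lambda \in L^0$ with $0 \le \lambda \le 1$, $L^0$-convexity gives $\lambda W + (1-\lambda)\hat W \in \bar E$, so $\N{\hat W}^2 \le \N{\lambda W + (1-\lambda)\hat W}^2 = \N{\hat W}^2 + 2\lambda\ang{W - \hat W,\hat W} + \lambda^2\N{W - \hat W}^2$; dividing by $\lambda$ on $\{\lambda > 0\}$ and letting $\lambda \downarrow 0$ (through, say, $\lambda = 1/n$) yields $\ang{W - \hat W,\hat W} \ge 0$. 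Hence $\ang{W,Z} \ge \N{Z}^2$ on $\Omega$ for all $W \in E \subseteq \bar E$.

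It remains to translate this back to $C$ and $D$. Writing $W = X - Y$ with $X \in C$, $Y \in D$ gives $\ang{X,Z} - \ang{Y,Z} \ge \N{Z}^2$ for all such $X,Y$, hence $\essinf_{X\in C}\ang{X,Z} \ge \esssup_{Y\in D}\ang{Y,Z} + \N{Z}^2$. Since $\N{Z}^2 \in L^0_{++}$ (strictly positive on all of $\Omega$), this is exactly the strict inequality~\eqref{sepcl}, after noting that $\essinf_{X\in C}\ang{X,Z}$ and $\esssup_{Y\in D}\ang{Y,Z}$ need a priori only lie in $\overline L$ and $\underline L$ respectively, but the displayed chain of inequalities forces $\essinf_{X\in C}\ang{X,Z} > -\infty$ and $\esssup_{Y\in D}\ang{Y,Z} < +\infty$, so both are finite where it matters. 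The main obstacle I anticipate is the $L^0$-boundedness issue needed to invoke Corollary~\ref{cordist}: one must be a little careful that intersecting $\bar E$ with a closed $L^0$-ball around $0$ does not destroy the property $\N{W} > 0$ for all $W$ in the new set (it does not, since the ball is only an upper bound on $\N{W}$) and that the minimizer over the truncated set is genuinely the minimizer over $\bar E$ (it is, as long as $\N{W_0}$ dominates the infimum, which holds because $W_0$ itself is a competitor). Everything else is a routine conditional transcription of the finite-dimensional separation-via-projection argument, using sequential continuity of $L^0$-linear maps and the $\sigma$-stability machinery from Section~\ref{sec:algebraic} and Section~\ref{sec:limits}.
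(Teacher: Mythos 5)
Your proposal is correct and follows essentially the same route as the paper: the easy direction via continuity of $W \mapsto \ang{W,Z}$ (the paper phrases it contrapositively with a sequence tending to $0$), and the hard direction by projecting $0$ onto $\cl(C-D)$ via Corollary~\ref{cordist} and the variational inequality $\ang{W,Z} \ge \N{Z}^2$, with $\N{Z}>0$ forced by $0 \notin \cl_A(C-D) = 1_A\cl(C-D)$. The $L^0$-boundedness truncation you worry about is already absorbed into the proof of Corollary~\ref{cordist} (take $D=\{0\}$ there), so no extra care is needed at that step.
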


\begin{proof}
Let us first assume that there exists an $A \in {\cal F}_+$ such that $0 \in \cl_A(C-D)$.
From Proposition \ref{proplimC} we know that $\cl_A(C-D) = \lim_A(C-D)$. 
So there exists a sequence $(X_n)_{n \in \mathbb{N}}$ in $1_A(C-D)$ such that 
$X_n \to 0$ a.e. It follows that there can exist no $Z \in (L^0)^d$ satisfying \eqref{sepcl}.

Now assume $0 \notin \cl_A(C-D)$ for all $A \in {\cal F}_+$.
It follows from Corollary \ref{corclstable} that $\cl(C-D)$ is $L^0$-convex.
So one obtains from Corollary \ref{cordist} that there exists a $Z \in \cl(C-D)$ such that
$$
\|Z\|^2 \le \|(1-\lambda)Z + \lambda W\|^2\\
= \|Z\|^2 + 2 \lambda \ang{Z,W-Z} + \lambda^2 \|W - Z\|^2
$$
for all $W \in \cl(C-D)$ and $\lambda \in L^0$ such that $0 < \lambda \le 1$.
Division by $2 \lambda$ and sending $\lambda$ to $0$ yields
$\ang{W,Z} \ge \|Z\|^2$. In particular,
$$
\ang{W,Z} \ge \N{Z}^2 \quad \mbox{for all } W \in C-D,
$$
and therefore,
$$
\essinf_{X \in C} \ang{X,Z} \ge \esssup_{Y \in D} \ang{Y,Z} + \N{Z}^2.
$$
It remains to show that $\|Z\| > 0$. But if this were not the case, the set
$A = \crl{Z=0}$ would belong to ${\cal F}_+$ and
$1_A Z = 0$. However, by assumption and Corollary \ref{corAcl}, one has
$0 \notin \cl_A(C-D) = 1_A \cl(C-D)$ for all $A \in {\cal F}_+$, a contradiction.
\end{proof}

\begin{corollary} \label{corsepcl}
Let $C$ and $D$ be non-empty sequentially closed $L^0$-convex subsets of
$(L^0)^d$ such that $D$ is $L^0$-bounded and $1_A C$ is disjoint from
$1_A D$ for all $A \in {\cal F}_+$. Then there exists a $Z \in (L^0)^d$ such that
$$
\essinf_{X \in C} \ang{X,Z} > \esssup_{Y \in D} \ang{Y,Z}.
$$
\end{corollary}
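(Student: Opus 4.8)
The plan is to deduce Corollary \ref{corsepcl} from the strong separation Theorem \ref{thmsepcl}. By that theorem, it suffices to show that $0 \notin \cl_A(C-D)$ for every $A \in {\cal F}_+$. So I fix $A \in {\cal F}_+$ and argue by contradiction, assuming $0 \in \cl_A(C-D)$.

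First I would set up the limit: since $C$ and $D$ are $L^0$-convex, so is $C-D$, and $C-D$ is in particular stable. By Corollary \ref{corAcl} and Proposition \ref{proplimC}, $\cl_A(C-D) = \lim_A(C-D)$, so there is a sequence $(X_n - Y_n)_{n \in \mathbb{N}}$ with $X_n \in 1_A C$, $Y_n \in 1_A D$, converging a.e.\ to $0$. The key idea is to use the $L^0$-boundedness of $D$ to extract, via the conditional Bolzano--Weierstrass theorem (Theorem \ref{thmBW}), a sequence $(N_n)$ in $\mathbb{N}({\cal F})$ with $N_{n+1} > N_n$ such that $Y_{N_n} \to \hat Y$ a.e.\ for some $\hat Y \in (L^0)^d$. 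Since $D$ is sequentially closed and $\sigma$-stable (being sequentially closed, stable, hence $\sigma$-stable by Corollary \ref{corclstable}), each $Y_{N_n}$ lies in $1_A D$ and therefore $\hat Y \in 1_A D$. Then $X_{N_n} = (X_{N_n} - Y_{N_n}) + Y_{N_n} \to 0 + \hat Y = \hat Y$ a.e.; since $C$ is sequentially closed and, by the same argument, $\sigma$-stable, each $X_{N_n} \in 1_A C$ forces $\hat Y \in 1_A C$. Hence $\hat Y \in 1_A C \cap 1_A D$ on $A$, contradicting the hypothesis that $1_A C$ is disjoint from $1_A D$.

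Having reached a contradiction, $0 \notin \cl_A(C-D)$ for all $A \in {\cal F}_+$, and Theorem \ref{thmsepcl} gives the desired $Z$. The main point requiring care is making sure that the a.e.\ limits $\hat Y$ of the subsequences actually land in $1_A D$ and $1_A C$: this is exactly where sequential closedness is used, together with the observation that replacing the original sequences by their $N_n$-indexed terms keeps us inside the $\sigma$-stable sets $1_A C$ and $1_A D$ (so that $X_{N_n}$ and $Y_{N_n}$ are genuine elements of these sets, as noted in the remark preceding Theorem \ref{thmBW}). Everything else is a direct application of results already proved. I do not anticipate a serious obstacle; the only subtlety is bookkeeping the restriction to $A$ and invoking Corollary \ref{corAcl} to identify $\cl_A$ with $1_A \cl$ and with $\lim_A$.
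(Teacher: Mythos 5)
Your proof is correct and follows essentially the same route as the paper: both reduce to verifying the hypothesis $0 \notin \cl_A(C-D)$ of Theorem \ref{thmsepcl} via the conditional Bolzano--Weierstrass theorem together with sequential closedness and the disjointness assumption. The only difference is presentational: the paper simply cites Corollary \ref{corcl+cl} to conclude that $C-D$ is sequentially closed (so that $\cl_A(C-D) = 1_A(C-D)$, which misses $0$ by disjointness), whereas you inline that subsequence-extraction argument inside a proof by contradiction.
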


\begin{proof}
$C-D$ is a non-empty $L^0$-convex set, which by Corollary \ref{corcl+cl}
is sequentially closed. It follows from the assumptions that
$0 \notin 1_A (C-D)$ for all $A \in {\cal F}_+$, and we know from Corollary \ref{corAcl} that
$1_A(C-D) = \cl_A(C-D)$. So the corollary is a consequence of Theorem \ref{thmsepcl}.
\end{proof}

\begin{lemma} \label{lemmasepcone}
Let $C$ be a non-empty $\sigma$-stable $L^0$-convex cone in $(L^0)^d$ such that 
$1_A C \not= 1_A(L^0)^d$ for all $A \in {\cal F}_+$.
Then there exists a $Z \in (L^0)^d$ such that
\be \label{conesep}
\N{Z} > 0 \quad \mbox{and} \quad \essinf_{X \in C} \ang{X,Z} \ge 0.
\ee
\end{lemma}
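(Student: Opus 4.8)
The plan is to separate the cone $C$ from a single point lying outside it, and then push that point off to infinity along a ray inside $C$, using the strong separation theorem (Theorem \ref{thmsepcl}) and the conditional Bolzano--Weierstrass theorem (Theorem \ref{thmBW}). First I would observe that since $1_A C \neq 1_A(L^0)^d$ for all $A \in {\cal F}_+$, there is at least one point $W \in (L^0)^d$ with $W \notin 1_A C$ for any $A \in {\cal F}_+$; more precisely, the collection of sets $A$ on which $1_A C = 1_A(L^0)^d$ is empty, and by a largest-set argument (analogous to Lemma \ref{lemmalargestset}, using $\sigma$-stability of $C$) one can produce a single $W = -V$ for some fixed $V \in (L^0)^d$ with $1_A W \notin 1_A C$ on every $A \in {\cal F}_+$. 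Actually it is cleaner to take $W$ to be any point of $(L^0)^d \setminus C$: since $C$ is $\sigma$-stable and $1_A C \neq 1_A (L^0)^d$ everywhere, one can glue together local witnesses of non-membership into a single $W$ with $1_A W \notin 1_A C$ for all $A \in {\cal F}_+$.

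Next, for each $n \in \mathbb{N}$, consider the sets $C$ and $D_n := \{-nW\}$. Because $C$ is an $L^0$-convex cone and $W \notin 1_A C$ on every $A$, one checks (again via $\sigma$-stability and the cone property) that $0 \notin \cl_B(C + nW) = \cl_B(C - D_n)$ for all $B \in {\cal F}_+$: if $0$ were an a.e.\ limit on $B$ of $X_k + nW$ with $X_k \in 1_B C$, then $-nW$ would lie in $\cl_B(C)$, hence $W \in \cl_B(C)$ by the cone property; and one must rule this out — this is where one needs that $C$ is not merely proper but that $\cl_B(C)$ is still proper, which requires a small separate argument (perhaps reducing to the case where $C$ is already sequentially closed by replacing $C$ with $\cl(C)$, noting that $\cl(C)$ is again a $\sigma$-stable $L^0$-convex cone by Corollary \ref{corclstable}, and that if $\cl(C) = 1_A(L^0)^d$ on some $A$ then by $L^0$-openness-type arguments $C$ is dense and one derives a contradiction — OR, more simply, one fixes $W$ to lie in $\inn((C)^c)$ via Lemma \ref{lemmaopenclosed} so that a whole ball around $W$ avoids $\cl(C)$). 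Granting this, Theorem \ref{thmsepcl} yields $Z_n \in (L^0)^d$ with
$$
\essinf_{X \in C} \ang{X, Z_n} > -n \ang{W, Z_n}.
$$

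Now I would normalize: replace $Z_n$ by $Z_n / \N{Z_n}$ (legitimate since $\N{Z_n} > 0$, as it separates strictly), so $\N{Z_n} = 1$. The sequence $(Z_n)$ is then $L^0$-bounded, so Theorem \ref{thmBW} gives a conditional subsequence $Z_{N_n} \to Z$ a.e.\ with $\N{Z} = 1$, in particular $\N{Z} > 0$. Since $0 \in C$ (being a cone — or if not, take $0 \in \cl(C)$), the separation inequality gives $\ang{0, Z_n} = 0 > -n\ang{W,Z_n}$, so $\ang{W, Z_n} > 0$ for all $n$. For any fixed $X \in C$ and $\lambda \in L^0_{++}$ we also have $\lambda X \in C$, and the inequality $\ang{\lambda X, Z_n} > -n \ang{W, Z_n} > -n \cdot 0$; dividing the separation inequality $\ang{X, Z_n} \ge -n\ang{W,Z_n} \ge 0$ (the last step because $\ang{W,Z_n}>0$) already gives $\ang{X, Z_n} \ge 0$ for every $X \in C$ — wait, one must be careful: the inequality only gives $\ang{X,Z_n} > -n\ang{W,Z_n}$, and since $\ang{W,Z_n}$ could be tiny, $-n\ang{W,Z_n}$ need not be nonnegative. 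The fix is to note $\essinf_{X\in C}\ang{X,Z_n} > -n\ang{W,Z_n}$ together with $0 \le \ang{0,Z_n}$ does not immediately close it; instead use that $C$ is a cone: for $X \in C$, $\ang{tX, Z_n} = t\ang{X,Z_n} > -n\ang{W,Z_n}$ for all $t \in L^0_{++}$, and letting $t \downarrow 0$ on $\{\ang{X,Z_n}<0\}$ forces a contradiction — hence $\ang{X, Z_n} \ge 0$ for all $X \in C$, all $n$. Passing to the limit along $N_n$, $\ang{X, Z} \ge 0$ for all $X \in C$, i.e.\ $\essinf_{X\in C}\ang{X,Z}\ge 0$, and $\N{Z} > 0$. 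This gives \eqref{conesep}.

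The main obstacle I expect is the intermediate claim that $0 \notin \cl_B(C - D_n)$ on every $B \in {\cal F}_+$ — equivalently, that $W$ can be chosen to stay outside $\cl(C)$ locally everywhere, not just outside $C$. The cone property handles the scaling by $n$, but the closure issue needs either a reduction to sequentially closed $C$ plus a lemma that a proper $\sigma$-stable $L^0$-convex cone has proper closure, or an appeal to Lemma \ref{lemmaopenclosed} to select $W \in \inn((C)^c)$ when $\cl(C)$ is proper and to dispatch the case $\cl_B(C) = 1_B(L^0)^d$ separately (in that case one would show $1_B C$ is so large that $1_B C = 1_B(L^0)^d$, contradicting the hypothesis, using Lemma \ref{lemmaintaff} and the fact that a dense $L^0$-convex cone with nonempty interior is everything). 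Everything else is routine manipulation of $\sigma$-stability, the cone property, and the conditional Bolzano--Weierstrass compactness.
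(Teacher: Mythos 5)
Your strategy is sound and genuinely different from the paper's, and, once a few slips are repaired, it does yield a complete proof. The paper instead applies Theorem \ref{thmspan} to split $\Omega$ into a part where $\lin(C)$ has rank $<d$ (there it simply takes a nonzero vector orthogonal to $\lin(C)$, so \eqref{conesep} holds with equality) and a part where $\aff(C)=\lin(C)=1_A(L^0)^d$ (there Lemma \ref{lemmaintaff} produces an interior point $Y$, Lemma \ref{lemmalambdaint} shows $-1_BY\notin\cl_B(C)$ for every $B$ --- else $0$ would be interior and the cone would be everything --- and Theorem \ref{thmsepcl} separates $C$ from $\crl{-Y}$); the cone property then upgrades strict separation to nonnegativity exactly as in your last step. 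Your route avoids the rank decomposition at the price of the two facts you correctly single out: (i) $\cl_B(C)\neq 1_B(L^0)^d$ for all $B\in{\cal F}_+$, and (ii) a single $W$ with $1_BW\notin\cl_B(C)$ for all $B\in{\cal F}_+$ can be glued together by $\sigma$-stability. Both go through, and your first suggested fix for (i) is the right one: if $\cl_B(C)=1_B(L^0)^d$, then $\aff_B(C)$, being $\sigma$-stable and hence sequentially closed, equals $1_B(L^0)^d$, so Lemma \ref{lemmaintaff} gives some $X\in\inn_B(C)$, and Lemma \ref{lemmalambdaint} applied to $X$ and $2U-X\in\cl_B(C)$ puts every $U\in 1_B(L^0)^d$ into $\inn_B(C)\subseteq 1_BC$, contradicting the hypothesis; this is essentially the same mechanism the paper uses. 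Three repairs are needed in your write-up: the point to separate from is $-nW$, so what you must arrange is $-W\notin\cl_B(C)$ (by the cone property $-nW\in\cl_B(C)$ iff $-W\in\cl_B(C)$; your sign is off where you conclude ``hence $W\in\cl_B(C)$''); in the scaling step you must let $t\uparrow\infty$ on $\crl{\ang{X,Z_n}<0}$ to contradict the finite lower bound $\essinf_{X\in C}\ang{X,Z_n}>-n\ang{W,Z_n}$, since letting $t\downarrow 0$ proves nothing; and the entire limit $n\to\infty$ with Theorem \ref{thmBW} is superfluous, because a single application of Theorem \ref{thmsepcl} to $C$ and $\crl{-W}$ already forces $\N{Z}>0$ (strict separation fails on any set where $Z$ vanishes) and, via the cone scaling, $\essinf_{X\in C}\ang{X,Z}\ge 0$.
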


\begin{proof}
If $C = \crl{0}$, the lemma is clear. Otherwise one obtains from Theorem \ref{thmspan} 
that there exist $A \in {\cal F}$ and $X_1, \dots, X_{d-1} \in C$ such that
$\lin_A(C) = \lin_A (L^0)^d$ and $\lin_{A^c} (C) \subseteq \lin_{A^c} \crl{X_1, \dots,X_{d-1}}$.
By Corollary \ref{corspan}, there exists $W \in \lin_{A^c} \crl{X_1, \dots,X_{d-1}}^{\perp}$ 
such that $\N{W} > 0$ on $A^c$. If $\mu[A] = 0$, then $Z = W$ satisfies \eqref{conesep}, 
and the proof is complete. If $\mu[A] >0$, one notes that since $C$ is an $L^0$-convex cone, one has
$\aff_A(C) = \lin_A(C) = 1_A (L^0)^d$. It follows from
Lemma \ref{lemmaintaff} that there exists a $Y \in \inn_A(C)$.
Then $1_B Y \in \inn_B(C)$ for every subset $B \in {\cal F}_+$ of $A$. But this implies
that $-1_B Y$ cannot be in $\cl_B(C)$. Otherwise it would follow from 
Lemma \ref{lemmalambdaint} that $0$ belongs to $\inn_B(C)$, implying that $1_B C = 1_B (L^0)^d$ 
and contradicting the assumptions. So Theorem \ref{thmsepcl} applied to $1_A C$ and $\crl{-Y}$ 
viewed as subsets of $1_A(L^0)^d$ yields a $V \in 1_A (L^0)^d$ such that 
$$
\essinf_{X \in 1_A C} \ang{X,V} > \ang{-Y,V} \quad \mbox{on } A.
$$
Since $C$ is an $L^0$-convex cone, $Z = 1_A V + 1_{A^c} W$ satisfies condition \eqref{conesep}.
\end{proof}

\begin{theorem} \label{thmweaksep} {\bf (Weak separation)}\\
Let $C$ and $D$ be non-empty $\sigma$-stable $L^0$-convex subsets of $(L^0)^d$. Then there exists
a $Z \in (L^0)^d$ such that
\be \label{sepint}
\N{Z} > 0 \quad \mbox{and} \quad
\essinf_{X \in C} \ang{X,Z} \ge \esssup_{Y \in D} \ang{Y,Z}
\ee
if and only if $0 \notin \inn_A(C-D)$ for all $A \in {\cal F}_+$.
\end{theorem}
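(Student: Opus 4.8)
The plan is to reduce \eqref{sepint} to the $L^0$-convex cone separation already available in Lemma \ref{lemmasepcone}. Put $E := C - D$, a non-empty $\sigma$-stable $L^0$-convex subset of $(L^0)^d$, and note that $\essinf_{X \in C}\langle X,Z\rangle - \esssup_{Y \in D}\langle Y,Z\rangle = \essinf_{W \in E}\langle W,Z\rangle$, so that \eqref{sepint} holds for some $Z$ precisely when there is a $Z$ with $\N{Z} > 0$ and $\essinf_{W \in E}\langle W,Z\rangle \ge 0$, while $0 \notin \inn_A(C-D)$ becomes $0 \notin \inn_A(E)$.

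For the ``only if'' implication I would argue by contraposition. If $0 \in \inn_A(E)$ for some $A \in {\cal F}_+$, pick $\varepsilon \in L^0_{++}$ with $B^{\varepsilon}_A(0) \subseteq 1_A E$; then for any $Z$ with $\N{Z} > 0$ the vector $W := -1_A\varepsilon Z / \N{Z}$ lies in $B^{\varepsilon}_A(0) \subseteq 1_A E$, yet $\langle W,Z\rangle = -1_A \varepsilon \N{Z} < 0$ on $A$, which is incompatible with $\langle W',Z\rangle \ge \essinf_{W'' \in E}\langle W'',Z\rangle \ge 0$ for $W' \in E$. Hence no $Z$ can satisfy \eqref{sepint}.

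For the ``if'' implication, assume $0 \notin \inn_A(E)$ for all $A \in {\cal F}_+$ and introduce the cone $K := \crl{\lambda W : \lambda \in L^0_{++},\ W \in E}$. First I would check that $K$ is a non-empty $\sigma$-stable $L^0$-convex cone with $E \subseteq K$: closure under scaling by $L^0_{++}$ and $E \subseteq K$ (take $\lambda = 1$) are immediate; $L^0$-convexity follows because for $0 \le \mu \le 1$ the weight $s := \mu\lambda_1 + (1-\mu)\lambda_2$ is always strictly positive, so $\mu\lambda_1 W_1 + (1-\mu)\lambda_2 W_2 = s\,(\frac{\mu\lambda_1}{s}W_1 + \frac{(1-\mu)\lambda_2}{s}W_2) \in K$; and $\sigma$-stability follows from $\sum_n 1_{A_n}\lambda_n W_n = (\sum_n 1_{A_n}\lambda_n)(\sum_n 1_{A_n} W_n)$ together with the $\sigma$-stability of $E$. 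Then I would verify the hypothesis of Lemma \ref{lemmasepcone}, namely that $1_B K \neq 1_B (L^0)^d$ for every $B \in {\cal F}_+$; granting this, the lemma yields $Z$ with $\N{Z} > 0$ and $\essinf_{X \in K}\langle X,Z\rangle \ge 0$, and $E \subseteq K$ gives $\essinf_{W \in E}\langle W,Z\rangle \ge 0$, i.e.\ \eqref{sepint}.

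The crux is the properness claim $1_B K \neq 1_B(L^0)^d$, and this is where the finite rank $d$ enters through a conditional ``cross-polytope'' argument. If $1_B K = 1_B(L^0)^d$ for some $B \in {\cal F}_+$, then for each $i \in \crl{1,\dots,d}$ and each sign one can write $\pm 1_B e_i = 1_B \lambda_i^{\pm} W_i^{\pm}$ with $\lambda_i^{\pm} \in L^0_{++}$ and $W_i^{\pm} \in E$, so the $2d$ points $P_i^{\pm} := 1_B W_i^{\pm} = \pm \mu_i^{\pm}\, 1_B e_i$, with $\mu_i^{\pm} := 1_B / \lambda_i^{\pm} > 0$ on $B$, all lie in $1_B E$. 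From the balanced identity $\sum_i(\mu_i^- P_i^+ + \mu_i^+ P_i^-) = 0$ one gets that $0$ is an $L^0$-convex combination of the $P_i^{\pm}$, hence $0 \in 1_B E$; and for any $Y \in 1_B(L^0)^d$ with $1_B\N{Y} \le m/(2d)$, where $m := \essinf_{i,\pm}\mu_i^{\pm}$, one writes $Y$ coordinatewise as $\sum_i (a_i P_i^+ + b_i P_i^-)$ with $a_i,b_i \ge 0$ and $\sum_i(a_i + b_i) \le 1/2$ on $B$, and then fills the missing weight using the balanced identity to exhibit $Y$ as an $L^0$-convex combination of the $P_i^{\pm} \in 1_B E$. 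Therefore $B^{\varepsilon}_B(0) \subseteq 1_B E$ for a suitable $\varepsilon \in L^0_{++}$, so $0 \in \inn_B(E)$, contradicting the standing assumption. The only delicate point is the $\mu$-a.e.\ bookkeeping that the coefficients are nonnegative and sum to $1_B$, but this is a routine translation of the $\mathbb{R}^d$ computation.
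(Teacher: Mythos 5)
Your proof is correct and follows essentially the same route as the paper: pass to $E = C - D$, form the generated cone $\cone(E) = \crl{\lambda W : \lambda \in L^0_{++},\, W \in E}$, check that it is a $\sigma$-stable $L^0$-convex cone with $1_A\cone(E) \neq 1_A(L^0)^d$ for all $A \in {\cal F}_+$, and apply Lemma \ref{lemmasepcone}. The only difference is that the paper dismisses the properness claim with ``from which it can be seen that,'' whereas your conditional cross-polytope argument (showing $1_B\cone(E) = 1_B(L^0)^d$ would force $0 \in \inn_B(E)$) supplies a correct verification of that step.
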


\begin{proof}
If there is an $A \in {\cal F}_+$ such that $0 \in \inn_A(C-D)$,
there can exist no $Z \in (L^0)^d$ such that \eqref{sepint} holds. Hence,
\eqref{sepint} implies $0 \notin \inn_A(C-D)$ for all $A \in {\cal F}_+$.

To show the converse implication, assume that $0 \notin \inn_A(C-D)$ for all $A \in {\cal F}_+$. 
Clearly, $C-D$ is $\sigma$-stable and $L^0$-convex. 
Therefore, one has $\cone(C-D) = \crl{\lambda X : \lambda \in L^0_{++}, X \in C-D}$,
from which it can be seen that $\cone(C-D)$ is $\sigma$-stable and satisfies
$1_A \cone(C-D) \not= 1_A (L^0)^d$ for all $A \in {\cal F}_+$.
So one obtains from Lemma \ref{lemmasepcone} that there exists a $Z \in (L^0)^d$ such that 
$$
\N{Z} > 0 \quad \mbox{and} \quad
\essinf_{X \in E} \ang{X,Z} \ge 0.
$$
This implies \eqref{sepint}.
\end{proof}

\begin{corollary} \label{corsepopen}
Let $C$ and $D$ be two non-empty $\sigma$-stable 
$L^0$-convex subsets of $(L^0)^d$ such that 
$1_A C$ is disjoint from $1_A D$ for all $A \in {\cal F}_+$ and $D$ is $L^0$-open.
Then there exists a $Z \in (L^0)^d$ such that
$$
\essinf_{X \in C} \ang{X,Z} > \ang{Y,Z} \quad \mbox{for all } Y \in D.
$$
\end{corollary}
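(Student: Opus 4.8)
The plan is to obtain the weak form of the separation from Theorem~\ref{thmweaksep} and then upgrade it to the stated strict inequality by exploiting that $D$ is $L^0$-open.

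First I would verify the hypothesis of Theorem~\ref{thmweaksep}, that is, $0\notin\inn_A(C-D)$ for every $A\in{\cal F}_+$. Since by the very definition of the interior one has $\inn_A(C-D)\subseteq 1_A(C-D)$, it is enough to check that $0\notin 1_A(C-D)$. If $0\in 1_A(C-D)$, there would be $X\in C$ and $Y\in D$ with $1_AX=1_AY$; but then $1_AX\in 1_AC\cap 1_AD$, contradicting the assumption that $1_AC$ and $1_AD$ are disjoint. Hence Theorem~\ref{thmweaksep} applies and yields $Z\in(L^0)^d$ with $\N{Z}>0$ and
$$
\essinf_{X\in C}\ang{X,Z}\ \ge\ \esssup_{Y\in D}\ang{Y,Z}.
$$

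It then remains to sharpen the inequality at each point of $D$. Fix $Y\in D$. Since $D$ is $L^0$-open, there is $\varepsilon\in L^0_{++}$ with $B^{\varepsilon}_{\Omega}(Y)\subseteq D$, and because $\N{Z}>0$ the point $Y':=Y+\varepsilon Z/\N{Z}$ satisfies $\N{Y'-Y}=\varepsilon$, hence lies in $D$. As $\ang{Y',Z}=\ang{Y,Z}+\varepsilon\N{Z}$ with $\varepsilon\N{Z}\in L^0_{++}$, we get $\esssup_{W\in D}\ang{W,Z}\ge\ang{Y,Z}+\varepsilon\N{Z}>\ang{Y,Z}$, and combining this with the displayed inequality gives $\essinf_{X\in C}\ang{X,Z}>\ang{Y,Z}$. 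Since $Y\in D$ is arbitrary, this is exactly the claim.

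I expect no serious obstacle here: given Theorem~\ref{thmweaksep}, the argument is short. The only step requiring a little attention is the first one, where it matters that disjointness is assumed for all $A\in{\cal F}_+$ and that $\inn_A$ is, by construction, contained in $1_A(\cdot)$; the second step is the standard device that $L^0$-openness of $D$ pushes the supremum over $D$ strictly past any individual point.
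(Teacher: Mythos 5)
Your proof is correct and follows essentially the same route as the paper: apply Theorem~\ref{thmweaksep} (after noting $0\notin 1_A(C-D)\supseteq \inn_A(C-D)$ for all $A\in{\cal F}_+$) and then use $L^0$-openness of $D$ to push $\esssup_{V\in D}\ang{V,Z}$ strictly above $\ang{Y,Z}$ for each $Y\in D$. The paper leaves the last step implicit; your explicit witness $Y'=Y+\varepsilon Z/\N{Z}$ is exactly the intended argument.
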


\begin{proof}
It follows from Theorem \ref{thmweaksep} that there exists a $Z \in (L^0)^d$ such that
$$
\N{Z} > 0 \quad \mbox{and} \quad
\essinf_{X \in C} \ang{X,Z} \ge \esssup_{V \in D} \ang{V,Z},
$$
and since $D$ is $L^0$-open, one has
$$
\esssup_{V \in D} \ang{V,Z} > \ang{Y,Z} \quad \mbox{for all } Y \in D.
$$
\end{proof}

As another consequence of Theorem \ref{thmweaksep} we obtain a  
conditional version of the Hahn--Banach extension theorem. 

\begin{corollary} {\bf (Conditional version of the Hahn--Banach extension theorem)}\\
Let $f : (L^0)^d \to L^0$ be an $L^0$-convex function such that 
$f(\lambda X ) = \lambda f(X)$ for all $\lambda \in L^0_+$ and 
$g : E \to L^0$ an $L^0$-linear mapping on a $\sigma$-stable $L^0$-linear subset $E$ of $(L^0)^d$ 
such that $g(X ) \le f(X)$ for all $X \in E$. Then there exists an $L^0$-linear extension 
$h : (L^0)^d \to L^0$ of $g$ such that $h(X) \le f(X)$ for all $X \in (L^0)^d$.
\end{corollary}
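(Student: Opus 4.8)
The plan is to derive the statement from the weak separation theorem (Theorem~\ref{thmweaksep}), applied in $(L^0)^{d+1}$ identified with $(L^0)^d\times L^0$ and endowed with the inner product $\ang{(X,s),(Y,t)}=\ang{X,Y}+st$. First I would set
$$
C:=\crl{(X,s)\in(L^0)^d\times L^0: s\ge f(X)}\quad\text{and}\quad D:=\crl{(X,g(X)):X\in E},
$$
i.e.\ the epigraph of $f$ and the graph of $g$. Since $f$ is $L^0$-convex it is stable (Lemma~\ref{lemmastablef}); using this one checks that $f$ is in fact $\sigma$-stable, because for pairwise disjoint $A_n$ with $\bigcup_n A_n=\Omega$ stability gives $1_{A_m}f(\sum_n 1_{A_n}X_n)=1_{A_m}f(X_m)$ for each $m$, whence $f(\sum_n 1_{A_n}X_n)=\sum_n 1_{A_n}f(X_n)$. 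Together with $L^0$-convexity of $f$ this shows that $C$ is $\sigma$-stable and $L^0$-convex. The analogous argument (using that $g$ is $L^0$-linear and $E$ is $L^0$-linear, so that $0\in E$, $g(0)=0$ and $1_AX\in E$ for $X\in E$) shows that $g$ is stable, even $\sigma$-stable, and hence that $D$ is $\sigma$-stable and $L^0$-convex.

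The key step is to show $0\notin\inn_A(C-D)$ for every $A\in{\cal F}_+$. Suppose not; then there is $\eps\in L^0_{++}$ with $B^\eps_A(0)\subseteq 1_A(C-D)$, so in particular the point $(0,-1_A\eps)$ lies in $1_A(C-D)$, i.e.\ $1_A(0,-\eps)=(1_AX'-1_AX,\,1_As'-1_Ag(X))$ for some $X'\in(L^0)^d$, $s'\ge f(X')$ and $X\in E$. Then $1_AX'=1_AX$, hence $1_Af(X')=f(1_AX')=f(1_AX)=1_Af(X)$ by stability of $f$ (and $f(0)=0$), while $1_As'=1_Ag(X)-1_A\eps$. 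Since $g(X)\le f(X)$ holds on all of $\Omega$ as $X\in E$, and $1_A\eps>0$ on $A$, this forces
$$
1_As'=1_Ag(X)-1_A\eps\le 1_Af(X)-1_A\eps=1_Af(X')-1_A\eps<1_Af(X')
$$
on $A$, contradicting $s'\ge f(X')$. Hence $0\notin\inn_A(C-D)$ for all $A\in{\cal F}_+$, and Theorem~\ref{thmweaksep} produces $(a,b)\in(L^0)^d\times L^0$ with $\N{(a,b)}>0$ and $\ang{X',a}+s'b\ge\ang{X,a}+g(X)b$ for all $X'\in(L^0)^d$, all $s'\ge f(X')$ and all $X\in E$.

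It then remains to turn $(a,b)$ into the extension. Taking $X=X'=0$ and $s'=t$ with $t\in L^0_+$ arbitrary gives $tb\ge0$ for all such $t$, so $b\ge0$; on $B:=\crl{b=0}$ the inequality with $X=0$ gives $1_B\ang{X',a}\ge0$ for all $X'$, hence (choosing $X'=-1_Ba$) $1_Ba=0$ and so $1_B(a,b)=0$, forcing $\mu[B]=0$ and $b\in L^0_{++}$. Dividing by $b$ and choosing $s'=f(X')$ yields $\ang{X',a/b}+f(X')\ge\ang{X,a/b}+g(X)$ for all $X'\in(L^0)^d$ and $X\in E$. Put $h(X):=-\ang{X,a/b}$, which is $L^0$-linear. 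Setting $X=0$ gives $h\le f$ on $(L^0)^d$; setting $X'=0$ gives $g(X)\le h(X)$ for $X\in E$; and since $-X\in E$ whenever $X\in E$, replacing $X$ by $-X$ yields $h(X)\le g(X)$, so $h=g$ on $E$. Thus $h$ is the required $L^0$-linear extension. The main obstacle will be the verification that $0\notin\inn_A(C-D)$ for every $A\in{\cal F}_+$, together with the routine but slightly delicate check that the epigraph of $f$ and the graph of $g$ are $\sigma$-stable, which is exactly what allows the use of Theorem~\ref{thmweaksep}.
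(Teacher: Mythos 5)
Your proposal is correct and follows essentially the same route as the paper: separate the epigraph of $f$ from the graph of $g$ in $(L^0)^d\times L^0$ via Theorem \ref{thmweaksep} after checking $\sigma$-stability and that $0\notin\inn_A(C-D)$, then normalize the separating functional to read off the $L^0$-linear extension. The only cosmetic differences are that the paper deduces $0\notin\inn_A(C-D)$ from the cone property of $C-D$ together with $1_A(0,-1)\notin 1_A(C-D)$, and identifies $h=g$ on $E$ by noting the $\esssup$ over the $L^0$-linear set $D$ must vanish, whereas you verify both points by direct substitution.
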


\begin{proof}
Note that
$$
C := \crl{(X,V) \in (L^0)^d \times L^0 : f(X) \le V} \quad \mbox{and} \quad D := \crl{(Y,g(Y)) : Y \in E}
$$
are $L^0$-convex sets in $(L^0)^d \times L^0$. By Lemma \ref{lemmastablef}, 
$f$ and $g$ are stable. It follows that $C$ and $D$ are $\sigma$-stable.
Moreover, since $C-D$ is an $L^0$-convex cone and $1_A(0,-1) \notin 1_A(C-D)$
for all $A \in {\cal F}_+$, one has $(0,0) \notin \inn_A (C-D)$ for all $A \in {\cal F}_+$. 
So one obtains from Theorem 
\ref{thmweaksep} that there exists a pair $(Z,W) \in (L^0)^d \times L^0$ such that 
\be \label{HBinfsup}
\N{Z} +|W| > 0 \quad \mbox{and} \quad
\essinf_{(X,V) \in C} \crl{\ang{X,Z} + VW} \ge \esssup_{Y \in E} \crl{\ang{Y,Z} + g(Y)W}.
\ee
It follows that $W > 0$. By multiplying $(Z,W)$ with $1/W$, one can assume that $W = 1$. Since 
$E$ and $g$ are $L^0$-linear, the ess\,sup in \eqref{HBinfsup} must be zero, and it follows that
$g(Y) = \ang{Y,-Z}$ for all $Y \in E$. Moreover, 
$f(X) \ge \ang{X,-Z}$ for all $X \in (L^0)^d$. So $h(X) := \ang{X,-Z}$ is the desired extension of $g$
to $(L^0)^d$.
\end{proof}

\begin{theorem} \label{thmpropsep} {\bf (Proper separation)}\\
Let $C$ and $D$ be two non-empty $\sigma$-stable $L^0$-convex subsets of $(L^0)^d$. Then there exists
a $Z \in (L^0)^d$ such that
\be \label{sepri}
\essinf_{X \in C} \ang{X,Z} \ge \esssup_{Y \in D} \ang{Y,Z} \quad \mbox{and} \quad
\esssup_{X \in C} \ang{X,Z} > \essinf_{Y \in D} \ang{Y,Z}
\ee
if and only if
$0 \notin \ri_A(C-D)$ for all $A \in {\cal F}_+$.
\end{theorem}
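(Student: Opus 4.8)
The plan is to reduce the statement to the \emph{proper separation of the single point $0$ from $E:=C-D$}, i.e.\ to finding $Z\in(L^0)^d$ with $\essinf_{W\in E}\ang{W,Z}\ge0$ and $\esssup_{W\in E}\ang{W,Z}>0$; here $E$ is non-empty, $\sigma$-stable and $L^0$-convex, $\N{Z}>0$ is then automatic, and \eqref{sepri} is equivalent to these two conditions via the identities $\essinf_{W\in E}\ang{W,Z}=\essinf_{X\in C}\ang{X,Z}-\esssup_{Y\in D}\ang{Y,Z}$ and $\esssup_{W\in E}\ang{W,Z}=\esssup_{X\in C}\ang{X,Z}-\essinf_{Y\in D}\ang{Y,Z}$ (valid under the stated $\pm\infty$ conventions; no ambiguity arises since $\ang{Y,Z}\in L^0$ for each $Y$). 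For the ``only if'' direction, suppose $Z$ satisfies \eqref{sepri} while $0\in\ri_A(E)$ for some $A\in{\cal F}_+$. The first inequality forces $\ang{W,Z}\ge0$ for all $W\in E$; since $0\in\aff_A(E)$ gives $\aff_A(E)=\lin_A(E)$ and $0\in\ri_A(E)$ puts a whole $\N{\cdot}$-ball of $\lin_A(E)$ around $0$ into $1_AE$, we get $1_AZ\perp\lin_A(E)$, so $1_A\ang{X,Z}$ does not depend on $X\in C$, $1_A\ang{Y,Z}$ does not depend on $Y\in D$, and the two agree. Hence $\esssup_{X\in C}\ang{X,Z}=\essinf_{Y\in D}\ang{Y,Z}$ on $A$, contradicting the second inequality of \eqref{sepri}.

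For the ``if'' direction, assume $0\notin\ri_A(E)$ for all $A\in{\cal F}_+$ and build $Z$ over a suitable partition of $\Omega$. If $E$ is a singleton $\{W_0\}$, then $\N{W_0}>0$ a.e.\ (otherwise $0\in\ri_A(E)$ on a set of positive measure) and $Z:=W_0$ works; so assume $E$ has more than one point. Fix $W_0\in E$. The family $\{A\in{\cal F}:0\in\aff_A(E)\}=\{A\in{\cal F}:1_AW_0\in\lin_A(E-W_0)\}$ is directed upwards (using that $\aff$ commutes with restriction, which follows from Remark~\ref{Rem1}), so it has a largest element $A^*$ by the argument of Lemma~\ref{lemmaspan}; set $A_*:=(A^*)^c$. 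On $A_*$ one has $0\notin\aff_B(E)$ for all ${\cal F}_+$-sets $B\subseteq A_*$. By Corollary~\ref{coraffsigma}, $\aff_{A_*}(E)$ is $\sigma$-stable and $L^0$-affine, hence sequentially closed by Proposition~\ref{propaffclosed}, so $\cl_B(\aff_{A_*}(E))=\aff_B(E)$ by Corollary~\ref{corAcl}, which omits $0$ for all such $B$. Theorem~\ref{thmsepcl} applied with $\aff_{A_*}(E)$ and $\{0\}$ inside the module $1_{A_*}(L^0)^d$ then gives $Z_*\in 1_{A_*}(L^0)^d$ with $\essinf_{W\in\aff_{A_*}(E)}\ang{W,Z_*}>0$ on $A_*$; since $1_{A_*}E\subseteq\aff_{A_*}(E)$, this $Z_*$ strongly (hence properly) separates $0$ from $E$ on $A_*$, with $\N{Z_*}>0$ there.

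On $A^*$ the affine hull $M:=\aff_{A^*}(E)=\lin_{A^*}(E)$ is a $\sigma$-stable $L^0$-linear subset of $1_{A^*}(L^0)^d$ (Corollary~\ref{coraffsigma}). By Corollary~\ref{corspan} there are a partition $A^*=\bigcup_{i=0}^dB_i$ and an orthonormal basis $X_1,\dots,X_d$ of $(L^0)^d$ on $A^*$ with $1_{B_i}M=\lin_{B_i}\{X_1,\dots,X_i\}$; here $\mu[B_0]=0$, since $1_{B_0}M=\{0\}$ would give $1_{B_0}E=\{0\}$ and thus $0\in\ri_{B_0}(E)$. Fix $1\le i\le d$. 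The map $\Phi_i(W):=(\ang{W,X_1},\dots,\ang{W,X_i})$ restricts to an $L^0$-linear isometry of $1_{B_i}M$ onto the rank-$i$ module $1_{B_i}(L^0)^i$, along which relative interiors correspond; since $\aff_B(1_{B_i}E)=1_BM$ for ${\cal F}_+$-sets $B\subseteq B_i$ (again by Remark~\ref{Rem1}), the set $\tilde E_i:=\Phi_i(1_{B_i}E)$ is $\sigma$-stable and $L^0$-convex with $\aff_B(\tilde E_i)=1_B(L^0)^i$ and $0\notin\inn_B(\tilde E_i)$ for all such $B$. Theorem~\ref{thmweaksep} applied with $\tilde E_i$ and $\{0\}$ gives $\tilde Z_i\in 1_{B_i}(L^0)^i$ with $\N{\tilde Z_i}>0$ and $\essinf_{w\in\tilde E_i}\ang{w,\tilde Z_i}\ge0$ on $B_i$, and this separation is automatically proper: by Lemma~\ref{lemmaintaff} there is an $x_0\in\inn_{B_i}(\tilde E_i)$, and were $1_B\ang{x_0,\tilde Z_i}=0$ on some ${\cal F}_+$-set $B\subseteq B_i$, then $x_0-\varepsilon\tilde Z_i/\N{\tilde Z_i}\in 1_B\tilde E_i$ for small $\varepsilon\in L^0_{++}$ would force $\essinf_{w\in\tilde E_i}\ang{w,\tilde Z_i}<0$ on $B$; hence $\ang{x_0,\tilde Z_i}>0$ and $\esssup_{w\in\tilde E_i}\ang{w,\tilde Z_i}>0$ on $B_i$. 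Pulling back via $\ang{W,Z^{(i)}}=\ang{\Phi_i(W),\tilde Z_i}$ for $W\in 1_{B_i}E$, the vector $Z^{(i)}:=\sum_{j=1}^i(\tilde Z_i)_jX_j$ properly separates $0$ from $E$ on $B_i$, with $\N{Z^{(i)}}>0$.

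Finally set $Z:=1_{A_*}Z_*+\sum_{i=1}^d1_{B_i}Z^{(i)}$. Since $E$ is $\sigma$-stable, $\essinf_{W\in E}\ang{W,Z}$ and $\esssup_{W\in E}\ang{W,Z}$ split over the partition $\Omega=A_*\cup\bigcup_{i\ge1}B_i$ into the local quantities just produced, so $\essinf_{W\in E}\ang{W,Z}\ge0$, $\esssup_{W\in E}\ang{W,Z}>0$ and $\N{Z}>0$; undoing the reduction of the first paragraph yields \eqref{sepri}. I expect the main difficulty to be not any single estimate but the restriction bookkeeping --- verifying that $\aff$ commutes with the partition, that ``$0\notin\ri_A(E)$'' transports to ``$0\notin\inn_B(\tilde E_i)$'' inside each rank-$i$ slice, and that $Z$ genuinely reassembles the two inequalities --- together with the single real idea that weak separation becomes automatically proper once one has retreated into $\aff(E)$, where $E$ has an interior point.
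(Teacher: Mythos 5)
Your proof is correct and follows essentially the same route as the paper's: the same dichotomy between the largest set on which $0$ lies in $\aff(C-D)$ and its complement, strong separation (Theorem \ref{thmsepcl} / Corollary \ref{corsepcl}) on the latter, and weak separation inside the linear span of $C-D$ --- decomposed by rank via Corollary \ref{corspan} --- on the former. The only real differences are that you make the application of Theorem \ref{thmweaksep} inside each rank-$i$ submodule precise through the isometry $\Phi_i$, and you justify properness via an interior point supplied by Lemma \ref{lemmaintaff}, where the paper asserts it directly from $0\notin\ri_A(C-D)$; both are faithful expansions of steps the paper leaves terse.
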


\begin{proof}
Denote $E= \aff(C-D)$. By Corollary \ref{coraffsigma}, $1_A E$ is for all $A \in {\cal F}_+$ 
$\sigma$-stable, and therefore, by Proposition \ref{propaffclosed}, sequentially closed.

If there exists an $A \in {\cal F}_+$ such that
$0 \in \ri_A(C-D)$, $1_A E$ is $L^0$-linear and there exists an $\varepsilon \in L^0_{++}$ such that
$B^A_{\varepsilon}(0) \cap 1_A E \subseteq 1_A(C-D)$. Suppose there exists
$Z \in (L^0)^d$ satisfying \eqref{sepri}. Then
\be \label{properineq1}
\ang{X,Z} \ge 0 \mbox{ for all } X \in \cl_A(C-D)
\ee
and
\be \label{properineq2}
\ang{X,Z} > 0 \mbox{ on } A \mbox{ for some } X \in 1_A(C-D).
\ee
One obtains from Corollary \ref{cordec}
that $Z = Z_1 + Z_2$ for some $Z_1 \in 1_A E$ and $Z_2 \in (1_A E)^{\perp}$. 
It follows from \eqref{properineq1} that $Z_1 = 0$. But this contradicts \eqref{properineq2}.
So \eqref{sepri} implies that $0 \notin \ri_A(C-D)$ for all $A \in {\cal F}_+$.

Now assume $0 \notin \ri_A(C-D)$ for all $A \in {\cal F}_+$. 
Since $E$ is $\sigma$-stable, there exists a largest $B \in {\cal F}$ such that 
$0 \in 1_B E$. If $\mu[B] = 0$, one has $0 \notin 1_A E$ for all $A \in {\cal F}_+$, and it
follows from Corollary \ref{corsepcl} that there exists a $Z \in (L^0)^d$ such that 
$\essinf_{X \in E} \ang{X,Z} > 0$, which implies \eqref{sepri}. 
If $\mu[B] > 0$, denote $A := \Omega \setminus B$. The same argument as before yields a $Z_0 \in 1_A (L^0)^d$
satisfying \eqref{properineq1}--\eqref{properineq2}. On the other hand,
$1_B E$ is $L^0$-linear. So it follows from 
Corollary \ref{corspan} that there exist disjoint sets $B_1, \dots, B_d \in {\cal F}$ satisfying 
$\bigcup_{i=1}^d B_i = B$ and an orthonormal basis $X_1, \ldots, X_d$ of $(L^0)^d$ on 
$B$ such that $1_{B_i} E = \lin_{B_i} \{X_1,\ldots,X_i\}$ for all $i =1, \dots, d$. For every 
$i \in {\cal I} := \crl{j =1, \dots, d : \mu[B_j] > 0}$ one can apply
Theorem \ref{thmweaksep} in the $L^0$-linear subset $1_{B_i} E$ 
to obtain a $Z_i \in 1_{B_i} E$ such that 
$$
\N{Z_i} > 0 \mbox{ on } B_i \quad \mbox{and} \quad
\essinf_{X \in C} \ang{X,Z_i} \ge \esssup_{Y \in D} \ang{Y,Z_i}.
$$
Since $0 \notin \ri_A(C-D)$ for all $A \in {\cal F}_+$, one has
$$
\esssup_{X \in C} \ang{X,Z_i} > \essinf_{Y \in D} \ang{Y,Z_i} \quad \mbox{on } B_i.
$$
If one sets $Z = 1_A Z_0 + \bigcup_{i \in {\cal I}} 1_{B_i} Z_i$, one obtains \eqref{sepri}, 
and the proof is complete.
\end{proof}

\setcounter{equation}{0}
\section{Properties of $L^0$-convex functions}
\label{sec:convex}

\begin{definition} \label{defsubgrad}
Consider a function $f : (L^0)^d \to L$ and an $X_0 \in (L^0)^d$.
\begin{itemize}
\item
We call $Y \in (L^0)^d$ an $L^0$-subgradient of $f$ at $X_0$ if 
$$
f(X_0) \in L^0 \quad \mbox{and} \quad 
f(X_0 + X) - f(X_0) \ge \ang{X,Y} \quad \mbox{for all } X \in (L^0)^d.
$$
By $\partial f(X_0)$ we denote the set of all $L^0$-subgradients of $f$ at $X_0$.
\item
If $f(X_0) \in L^0$ and for some $X \in (L^0)^d$ the limit
$$
f'(X_0;X) := \lim_{n \to \infty} n \edg{f(X_0 + X/n) - f(X_0)}
$$
exists a.e. ($+\infty$ and $-\infty$ are allowed as limits), we call it $L^0$-directional derivative
of $f$ at $X_0$ in the direction $X$.
\item 
We say $f$ is $L^0$-differentiable at $X_0$ if $f(X_0) \in L^0$ and there 
exists a $Y \in (L^0)^d$ such that 
$$
\frac{f(X_0 + X_n) - f(X_0) - \ang{X_n,Y}}{\N{X_n}} \to 0 \mbox{ a.e.}
$$
for every sequence $(X_n)_{n \in \mathbb{N}}$ in $(L^0)^d$ satisfying 
$X_n \to 0$ a.e. and $\N{X_n} > 0$ for all $n \in \mathbb{N}$. If such a $Y$ exists, we call 
it the $L^0$-derivative of $f$ at $X_0$ and denote it by $\nabla f(X_0)$.
\item
The $L^0$-convex conjugate $f^* : (L^0)^d \to L$ is given by 
$$
f^*(Y) := \esssup_{X \in (L^0)^d} \crl{\ang{X,Y} - f(X)}.
$$
\item
If $f$ is $L^0$-convex, we set
$$
\dom f := \crl{X \in (L^0)^d : f(X) < + \infty}.
$$
\item
By $\conv f$ we denote the largest $L^0$-convex function below $f$
and by $\underline{\conv} f$ the largest sequentially lsc $L^0$-convex function below $f$.
\item
If $f$ is $L^0$-convex and satisfies $f(\lambda X) = \lambda f(X)$ for all 
$\lambda \in L^0_{++}$ and $X \in (L^0)^d$, we call $f$ $L^0$-sublinear.
\item 
For every pair $(Y,Z) \in (L^0)^d \times L^0$ we denote by $f^{Y,Z}$ the function 
from $(L^0)^d$ to $L^0$ given by $f^{Y,Z}(X) := \ang{X,Y} + Z$.
\end{itemize}
\end{definition}

\begin{theorem} \label{thmconvexcont}
Let  $f: (L^0)^d \to L$ be an $L^0$-convex function and $X_0 \in \inn(\dom f)$ such that $f(X_0) \in L^0$.  
Then $f(X) \in \overline{L}$ for all $X \in (L^0)^d$ and $f$ is sequentially continuous on $\inn(\dom f)$.
\end{theorem}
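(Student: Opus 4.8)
The plan is to follow the classical route: first show that $f$ cannot take the value $-\infty$, and then that a finite-valued $L^0$-convex function is locally Lipschitz on the interior of its domain — the only extra care being that every radius produced lies in $L^0_{++}$ and that one never divides by $\N{\,\cdot\,}$ off the set where it is strictly positive. Write $B^\varepsilon_\Omega(X) := \crl{Y \in (L^0)^d : \N{Y - X} \le \varepsilon}$ for $\varepsilon \in L^0_{++}$ (the balls of Section~\ref{sec:open} with $A = \Omega$).

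\emph{Finiteness from below.} I would first prove $f(X) \in \overline L$ for every $X \in (L^0)^d$. Fix $X$, pick $\varepsilon_0 \in L^0_{++}$ with $B^{\varepsilon_0}_\Omega(X_0) \subseteq \dom f$, and set $t := \varepsilon_0 / \brak{2(1 + \N{X_0 - X})} \in L^0_{++}$ and $Y := X_0 + t(X_0 - X)$, so $\N{Y - X_0} \le \varepsilon_0$ and hence $f(Y) < +\infty$. Since $X_0 = \tfrac{1}{1+t} Y + \tfrac{t}{1+t} X$, $L^0$-convexity gives $f(X_0) \le \tfrac{1}{1+t} f(Y) + \tfrac{t}{1+t} f(X)$; as $f(X_0) \in L^0$ and $\tfrac{t}{1+t} > 0$, the set $\crl{f(X) = -\infty}$ must be null. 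In particular $f$ takes values in $L^0$ on $\dom f$, and therefore on $\inn(\dom f)$.

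\emph{Local Lipschitz continuity at an interior point.} Now fix an arbitrary $X_1 \in \inn(\dom f)$; by the previous step $f(X_1) \in L^0$, and I claim $f$ is Lipschitz on a ball around $X_1$. Choose $\delta \in L^0_{++}$ with $B^\delta_\Omega(X_1) \subseteq \dom f$. Writing $Y - X_1 = \sum_i c_i e_i$ and using $\sum_i|c_i| \le \sqrt d\,\N{Y - X_1}$, a coefficient-wise computation shows $B^{\varepsilon_1}_\Omega(X_1) \subseteq \conv\crl{X_1,\ X_1 \pm \tfrac{\delta}{2} e_i : i = 1, \dots, d}$ with $\varepsilon_1 := \delta/(2\sqrt d)$; since all these vertices lie in $\dom f$, their $f$-values lie in $L^0$ by the previous step, so $L^0$-convexity yields $f \le M$ on $B^{\varepsilon_1}_\Omega(X_1)$, where $M \in L^0$ is the (finite) essential supremum of the vertex values. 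The reflection $Y \mapsto 2 X_1 - Y$ together with $f(X_1) \le \tfrac12 f(Y) + \tfrac12 f(2 X_1 - Y)$ then gives $f \ge 2 f(X_1) - M$ there, hence $|f| \le K$ on $B^{\varepsilon_1}_\Omega(X_1)$ for some $K \in L^0_+$. Finally, for $X, Y \in B^{\varepsilon_1/2}_\Omega(X_1)$, put $A := \crl{X \neq Y}$, $\rho := \N{X - Y} + 1_{A^c} \in L^0_{++}$, $Z := Y + \tfrac{\varepsilon_1}{2\rho}(Y - X)$ and $t := 2 \N{X - Y}/\varepsilon_1$; one checks $Z \in B^{\varepsilon_1}_\Omega(X_1)$ and $Y = \tfrac{1}{1+t} X + \tfrac{t}{1+t} Z$ on all of $\Omega$ (trivially on $A^c$), so $L^0$-convexity and $|f| \le K$ give $f(Y) - f(X) \le \tfrac{t}{1+t}\brak{f(Z) - f(X)} \le \tfrac{4K}{\varepsilon_1} \N{X - Y}$, and by symmetry $|f(X) - f(Y)| \le \tfrac{4K}{\varepsilon_1}\N{X - Y}$ on $B^{\varepsilon_1/2}_\Omega(X_1)$.

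\emph{Sequential continuity.} Let $X_n \to X_1$ a.e.\ and set $\rho_n := \N{X_n - X_1}$. Projecting onto the ball, let $\tilde X_n := X_1 + \tfrac{(\varepsilon_1/2) \wedge \rho_n}{\rho_n}(X_n - X_1)$ on $\crl{\rho_n > 0}$ and $\tilde X_n := X_1$ elsewhere; then $\tilde X_n \in B^{\varepsilon_1/2}_\Omega(X_1)$, $\N{\tilde X_n - X_1} = (\varepsilon_1/2) \wedge \rho_n \to 0$ a.e., and $\tilde X_n = X_n$ on $\crl{\rho_n \le \varepsilon_1/2}$. By the Lipschitz bound, $|f(\tilde X_n) - f(X_1)| \le \tfrac{4K}{\varepsilon_1}\brak{(\varepsilon_1/2) \wedge \rho_n} \to 0$ a.e. Since for a.e.\ $\omega$ one has $\rho_n(\omega) \le \varepsilon_1(\omega)/2$ for all large $n$, on that event $f(X_n)(\omega) = f(\tilde X_n)(\omega) \to f(X_1)(\omega)$; hence $f(X_n) \to f(X_1)$ a.e., and as $X_1 \in \inn(\dom f)$ was arbitrary, $f$ is sequentially continuous on $\inn(\dom f)$. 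The genuine obstacle is this last step: converting the ``$L^0$-ball'' Lipschitz estimate — which constrains only arguments lying in a ball of some fixed $L^0_{++}$ radius — into honest a.e.\ convergence of $f(X_n)$, since an a.e.-convergent sequence need not eventually lie in such a ball as an element of $(L^0)^d$. The projection device, combined with the fact that a.e.\ convergence does eventually place $X_n(\omega)$ inside the ball for a.e.\ $\omega$, is what resolves this; a secondary, purely bookkeeping, nuisance is keeping all constructed radii and scalars strictly positive so that the $L^0$-convexity inequalities apply without having to invoke the $\pm\infty$ conventions.
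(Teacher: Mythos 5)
Your proof is correct and follows essentially the same route as the paper's: bound $f$ from above on a neighbourhood of an interior point by convex combinations of finitely many points of $\dom f$, rule out the value $-\infty$ by writing $X_0$ as a convex combination of $X$ and a nearby point of $\dom f$, derive a quantitative estimate on an $L^0$-ball, and then localize to turn that estimate into a.e.\ convergence along an arbitrary sequence. The only step you should make explicit is that the identification $f(\tilde X_n) = f(X_n)$ on $\{\rho_n \le \varepsilon_1/2\}$ is not automatic for a map defined on equivalence classes rather than $\omega$-wise; it follows from the stability of $L^0$-convex functions (Lemma \ref{lemmastablef}), which is precisely the lemma the paper invokes at the corresponding point of its own argument.
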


\begin{proof}
Since $X_0 \in \inn(\dom f)$, there exists an $\varepsilon \in L^0_{++}$ 
such that $V := \max_i f(X_0 \pm \varepsilon e_i) < + \infty$. 
By $L^0$-convexity, one has $f(X) \le V$ for all $X \in X_0 + U$, where 
$$
U := \crl{X \in (L^0)^d: \sum_{i=1}^d |X^i| \le \varepsilon}.
$$
Assume that there exist $X \in (L^0)^d$ and $A \in {\cal F}_+$ such that 
$f(X) = - \infty$ on $A$. Then one can choose a $Z \in X_0 + U$ and a
$\lambda \in L^0$ such that $0 < \lambda \le 1$ and $X_0 = \lambda X + (1-\lambda)Z$.
It follows that $f(X_0) \le \lambda f(X) + (1-\lambda) f(Z) = - \infty$ on $A$.
But this contradicts the assumptions. So $f(X) \in \overline{L}$ for all $X \in (L^0)^d$.

Now pick an $X \in U$ and a $\lambda \in L^0$ such that $0 < \lambda \le 1$. Then 
$$
f(X_0 + \lambda X) = f(\lambda (X_0 + X) + (1-\lambda) X_0) \le \lambda f(X_0+X) + (1-\lambda) f(X_0),
$$
and therefore,
$$
f(X_0 + \lambda X) - f(X_0) \le \lambda [f(X_0 + X) - f(X_0)] \le \lambda (V - f(X_0)).
$$
On the other hand, 
$$
X_0 = \frac{1}{1+ \lambda} (X_0 + \lambda X) + \frac{\lambda}{1 + \lambda} (X_0 - X).
$$
So 
$$
f(X_0) \le \frac{1}{1+ \lambda} f(X_0 + \lambda X) + \frac{\lambda}{1 + \lambda} f(X_0 - X),
$$
which gives
$$
f(X_0) - f(X_0 + \lambda X) \le \lambda [f(X_0 - X) - f(X_0)] \le \lambda (V - f(X_0)).
$$
Hence, we have shown that 
$$
|f(X) - f(X_0)| \le \lambda (V - f(X_0)) \quad \mbox{for all } X \in X_0 + \lambda U.
$$
Let $(X_n)_{n \in \mathbb{N}}$ be a sequence in $(L^0)^d$ converging a.e. to $X_0$.
For every $k \in \mathbb{N}$, the sets
$$
A^k_m := \bigcap_{n \ge m} \crl{X_n - X_0 \in U/k}
$$
are increasing in $m$ with $\bigcup_{m \ge 1} A^k_m = \Omega$.
By Lemma \ref{lemmastablef}, $f$ is stable. Therefore, 
$$
|f(X_n) - f(X_0)| \le (V - f(X_0))/k \quad \mbox{for all } n \ge m \quad \mbox{on } A^k_m,
$$
and one obtains 
$$
\mu \edg{\bigcup_{k \ge 1} \bigcap_{m \ge 1} \bigcup_{n \ge m}
\crl{|f(X_n) - f(X_0)| > (V - f(X_0))/k}} = 0.
$$
So $f(X_n) \to f(X_0)$ a.e., and the theorem follows.
\end{proof}

As an immediate consequence of Theorem \ref{thmconvexcont} one obtains the following

\begin{corollary} \label{corconvexcotinuous}
An $L^0$-convex function $f: (L^0)^d \to \overline{L}$ is sequentially continuous on $\inn (\dom f)$.
\end{corollary}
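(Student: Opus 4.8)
The plan is to reduce directly to Theorem~\ref{thmconvexcont}, which already does all the work. First I would dispose of the trivial case: if $\inn(\dom f) = \emptyset$, then the assertion that $f$ is sequentially continuous on $\inn(\dom f)$ holds vacuously, and there is nothing to prove.

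So suppose $\inn(\dom f) \neq \emptyset$ and pick an arbitrary $X_0 \in \inn(\dom f)$. Since $X_0 \in \dom f$, one has $f(X_0) < +\infty$; and since by hypothesis $f$ takes values in $\overline{L}$, one has $f(X_0) > -\infty$. Hence $f(X_0) \in L^0$, so that $X_0$ satisfies exactly the assumptions of Theorem~\ref{thmconvexcont}. That theorem then yields immediately that $f$ is sequentially continuous on $\inn(\dom f)$, which is the claim. (It also gives $f(X) \in \overline{L}$ for all $X \in (L^0)^d$, which is here part of the standing hypothesis.)

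There is essentially no obstacle to overcome here, which is why the statement is labelled a corollary: the only point worth spelling out is that membership of $X_0$ in the interior of the effective domain already forces finiteness of $f(X_0)$ from above, while the assumed range $\overline{L}$ forces finiteness from below, so the extra condition $f(X_0) \in L^0$ appearing in Theorem~\ref{thmconvexcont} comes for free. No $L^0$-module subtleties enter, since $\inn(\dom f)$ is the interior on all of $\Omega$ and the reduction only applies Theorem~\ref{thmconvexcont} at the single point $X_0$.
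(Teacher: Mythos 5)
Your proposal is correct and is exactly the argument the paper intends: the corollary is stated as an immediate consequence of Theorem~\ref{thmconvexcont}, and the only point to check is that $X_0 \in \inn(\dom f)$ gives $f(X_0) < +\infty$ while the range $\overline{L}$ gives $f(X_0) > -\infty$, so $f(X_0) \in L^0$ and the theorem applies. Nothing further is needed.
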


\begin{theorem} \label{thmexsubdiff}
Let $f : (L^0)^d \to \overline{L}$ be an $L^0$-convex function and $X_0 \in \ri(\dom f)$. Then 
$\partial f(X_0) \not= \emptyset$. In particular, if $f(X) \in L^0$ for all $X \in (L^0)^d$, then 
$\partial f(X_0) \not= \emptyset$ for all $X \in (L^0)^d$.
\end{theorem}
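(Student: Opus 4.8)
The strategy is the classical one from finite-dimensional convex analysis: separate the epigraph of $f$ from the point $(X_0,f(X_0))$ inside $(L^0)^d\times L^0\cong(L^0)^{d+1}$ and read off an $L^0$-subgradient from the separating normal vector, once one knows that this normal is not ``vertical''.

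First, since $f$ maps into $\overline{L}$ and $X_0\in\dom f$, one has $f(X_0)\in L^0$. I would put
$$
C:=\epi f=\crl{(X,V)\in(L^0)^d\times L^0:f(X)\le V},\qquad D:=\crl{(X_0,f(X_0))}.
$$
Both are non-empty and $L^0$-convex, the latter obviously and the former directly from the $L^0$-convexity of $f$, and the singleton $D$ is trivially $\sigma$-stable. For $C$ I would first invoke Lemma~\ref{lemmastablef} to get that $f$ is stable, and then note the following locality property: for pairwise disjoint $A_n\in{\cal F}$ with $\bigcup_n A_n=\Omega$ and $X=\sum_n 1_{A_n}X_n\in(L^0)^d$ one has $X=1_{A_n}X_n+1_{A_n^c}X$ for each $n$, so stability gives $1_{A_n}f(X)=1_{A_n}f(X_n)$ and hence $f(X)=\sum_n 1_{A_n}f(X_n)$. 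Since points of $\epi f$ have finite second coordinate, this shows $\epi f$ is $\sigma$-stable, so Theorem~\ref{thmpropsep} applies to $C$ and $D$ once its geometric hypothesis is checked.

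That hypothesis, $0\notin\ri_A(C-D)$ for all $A\in{\cal F}_+$, amounts to $1_A(X_0,f(X_0))\notin\ri_A(\epi f)$ for every such $A$. If it failed there would be $\eps\in L^0_{++}$ with $B^{\eps}_A((X_0,f(X_0)))\cap\aff_A(\epi f)\subseteq 1_A\epi f$; but $(X_0,f(X_0)-\delta)\in\aff_A(\epi f)$ for small $\delta\in L^0_{++}$, being an affine combination of $(X_0,f(X_0))$ and $(X_0,f(X_0)+1)$, so $f(X_0)\le f(X_0)-\delta$ on $A$, a contradiction. Theorem~\ref{thmpropsep} then yields $(Z,W)\in(L^0)^d\times L^0$ with
$$
\ang{X,Z}+VW\ \ge\ \ang{X_0,Z}+f(X_0)W\quad\text{for all }(X,V)\in\epi f,
$$
together with $\esssup_{(X,V)\in\epi f}\crl{\ang{X,Z}+VW}>\ang{X_0,Z}+f(X_0)W$.

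Taking $X=X_0$ and letting $V\ge f(X_0)$ vary gives $W\ge 0$. The crux — and, in my view, the only non-routine step — is to exclude $A:=\crl{W=0}\in{\cal F}_+$, and this is exactly where $X_0\in\ri(\dom f)$ is used. On such an $A$ the displayed inequality reads $\ang{X,Z}\ge\ang{X_0,Z}$ on $A$ for every $X\in\dom f$; since $X_0\in\ri(\dom f)$, both $X_0\pm H$ lie in $\dom f$ for all $H$ in a small $L^0$-ball inside the submodule $\aff(\dom f)-X_0$, and replacing $H$ by $\pm H$ and scaling forces $1_A\ang{X-X_0,Z}=0$ for all $X\in\dom f$. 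Hence $\esssup_{(X,V)\in\epi f}\crl{\ang{X,Z}+VW}=\ang{X_0,Z}$ on $A$, contradicting the strict inequality, whose right-hand side also equals $\ang{X_0,Z}$ on $A$. Therefore $W>0$; setting $Y:=-Z/W$ and taking $V=f(X)$ in the displayed inequality turns it into $f(X)-f(X_0)\ge\ang{X-X_0,Y}$ for $X\in\dom f$, which is trivial for $X\notin\dom f$. After the substitution $X\mapsto X_0+X$ this says $Y\in\partial f(X_0)$. Finally, if $f$ is everywhere finite then $\dom f=(L^0)^d$, which is $L^0$-open and equals its own relative interior, so the conclusion holds at every point.
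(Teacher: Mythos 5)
Your proof is correct and follows essentially the same route as the paper: apply the proper separation theorem (Theorem \ref{thmpropsep}) to the epigraph and the point $(X_0,f(X_0))$, deduce that the vertical component of the separating functional is nonnegative, and use $X_0\in\ri(\dom f)$ to rule out its vanishing on a set of positive measure. You merely spell out two steps the paper leaves implicit (the $\sigma$-stability of $\epi f$ and the contradiction in the degenerate case $W=0$), and both of your arguments are sound.
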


\begin{proof}
By Lemma \ref{lemmastablef}, $f$ is stable. Therefore,
$$
C := \crl{(X,V) \in (L^0)^d \times L^0 : f(X) \le V}
$$
is an $L^0$-convex, $\sigma$-stable subset of $(L^0)^d \times L^0$.
Since $(X_0, f(X_0)+ 1) $ is in $C$, one has 
$(0,0) \notin \ri_A(C- (X_0,f(X_0))$ for all $A \in {\cal F}_+$. 
So it follows from Theorem \ref{thmpropsep} that there exists 
$(Y,Z) \in (L^0)^d \times L^0$ such that 
\be \label{infv}
\essinf_{(X,V) \in C} \crl{\ang{X,Y} + VZ} \ge \ang{X_0,Y} + f(X_0)Z
\ee and
\be \label{supv}
\esssup_{(X,V) \in C} \crl{\ang{X,Y} + VZ} > \ang{X_0,Y} + f(X_0)Z.
\ee
\eqref{infv} implies that $Z \ge 0$. Now assume there exists an $A \in {\cal F}_+$ 
such that $1_A Z =0$. Then since $X_0 \in \ri (\dom f)$, \eqref{supv} contradicts \eqref{infv}. 
So one must have $Z > 0$, and by multiplying $(Y,Z)$ with $1/Z$, one can assume $Z=1$. 
It follows from \eqref{infv} that
$$
\essinf_{X \in \dom f} \crl{\ang{X,Y} + f(X)} = \ang{X_0,Y} + f(X_0),
$$
which shows that $-Y$ is an $L^0$-subgradient of $f$ at $X_0$.
\end{proof}

\begin{lemma} \label{lemmamon*}
Let $f,g : (L^0)^d \to L$ be functions such that $f \ge g$. Then the following hold:
\begin{itemize}
\item[{\rm (i)}]
$f^*$ is sequentially lsc and $L^0$-convex;
\item[{\rm (ii)}]
$f^*(Y) \ge \ang{X,Y} - f(X)$ for all $X,Y \in (L^0)^d$;
\item[{\rm (iii)}]
$Y \in \partial f(X)$ if and only if $f(X) \in L^0$ and
$f^*(Y) = \ang{X,Y} - f(X)$;
\item[{\rm (iv)}] 
$f^* \le g^*$ and $f^{**} \ge g^{**}$;
\item[{\rm (v)}]
$f \ge f^{**}$ and $f^* = f^{***}$.
\end{itemize}
\end{lemma}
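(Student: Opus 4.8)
The plan is to establish (i)--(v) in order, working directly from the definition $f^*(Y) = \esssup_{X \in (L^0)^d} \crl{\ang{X,Y} - f(X)}$ and keeping the conventions $+\infty - \infty = +\infty$ and $0 \cdot (\pm\infty) = 0$ in force throughout, so that every expression below is well defined. Item (ii) is immediate: for each fixed $X$ the quantity $\ang{X,Y} - f(X)$ is one member of the family whose essential supremum defines $f^*(Y)$, hence $f^*(Y) \ge \ang{X,Y} - f(X)$.

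For (i) I would write $f^* = \esssup_{X} h_X$, where $h_X(Y) := \ang{X,Y} - f(X)$. When $f(X) \in L^0$ the map $h_X$ is $L^0$-affine on $(L^0)^d$ (being $\ang{X,\cdot}$ shifted by an a.e.-finite constant), hence sequentially continuous by Corollary \ref{coraffcont}; when $f(X) = \pm\infty$ it is constant and these properties are trivial. $L^0$-convexity of $f^*$ then follows because for $\lambda \in L^0$ with $0 \le \lambda \le 1$ one has $h_X(\lambda Y_1 + (1-\lambda) Y_2) = \lambda h_X(Y_1) + (1-\lambda) h_X(Y_2) \le \lambda f^*(Y_1) + (1-\lambda) f^*(Y_2)$ for every $X$; the right-hand side is free of $X$, so it bounds the essential supremum over $X$, giving $f^*(\lambda Y_1 + (1-\lambda) Y_2) \le \lambda f^*(Y_1) + (1-\lambda) f^*(Y_2)$. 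Sequential lower semicontinuity is the same idea: if $Y_n \to Y$ a.e., then for each fixed $X$, $h_X(Y) = \lim_n h_X(Y_n) \le \liminf_n f^*(Y_n)$ since $h_X(Y_n) \le f^*(Y_n)$; the right-hand side again does not depend on $X$, so taking the essential supremum over $X$ on the left yields $f^*(Y) \le \liminf_n f^*(Y_n)$.

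Item (iii) is a rewriting of the subgradient inequality. If $Y \in \partial f(X)$, then $f(X) \in L^0$ and $f(X+W) - f(X) \ge \ang{W,Y}$ for all $W \in (L^0)^d$; substituting $W = X' - X$ and rearranging gives $\ang{X,Y} - f(X) \ge \ang{X',Y} - f(X')$ for all $X'$, hence $\ang{X,Y} - f(X) \ge f^*(Y)$, which together with (ii) forces equality. Reading this chain backwards, and using that $f(X) \in L^0$ is part of the hypothesis in the reverse direction, gives the converse.

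For (iv) and (v) the single fact I would isolate is that $h \mapsto h^*$ reverses order: if $h_1 \le h_2$, then $\ang{X,Y} - h_1(X) \ge \ang{X,Y} - h_2(X)$ for every $X$, so $h_1^* \ge h_2^*$. Applying this to $f \ge g$ gives $f^* \le g^*$, and applying it once more gives $f^{**} \ge g^{**}$, which is (iv). For (v), inequality (ii) reads $f(X) \ge \ang{X,Y} - f^*(Y)$ for all $X,Y$, and taking the essential supremum over $Y$ gives $f \ge f^{**}$; then order-reversal applied to $f \ge f^{**}$ gives $f^* \le f^{***}$, while the just-established $f \ge f^{**}$ used with $f^*$ in place of $f$ gives $f^* \ge (f^*)^{**} = f^{***}$, so $f^* = f^{***}$. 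None of the steps is a serious obstacle; the only point requiring a little care is the interchange of the essential supremum over the (possibly uncountable) index set $(L^0)^d$ with the convexity and semicontinuity inequalities in (i), which works precisely because each such inequality holds simultaneously for all $X$ with a bound not involving $X$, together with bookkeeping for the $\pm\infty$ conventions.
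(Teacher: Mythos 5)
Your proof is correct and follows essentially the same route as the paper's: (ii) from the definition, (i) by bounding each $h_X$ uniformly in $X$ before taking the essential supremum (the paper phrases this as an interchange of $\esssup$ with $\sup_m\inf_{n\ge m}$, which is the same estimate), (iii) by rearranging the subgradient inequality, and (iv)--(v) by order reversal of conjugation together with $f \ge f^{**}$ applied to $f^*$. No gaps.
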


\begin{proof}
To prove (i) let $(Y_n)_{n \in \mathbb{N}}$ be a sequence in $(L^0)^d$ converging a.e. to some 
$Y \in (L^0)^d$. Then 
\beas
\liminf_{n \to \infty} f^*(Y_n) 
&=& \sup_{m \ge 1} \inf_{n \ge m} \esssup_{X \in (L^0)^d} \crl{\ang{X,Y_n} - f(X)}\\
&\ge& \esssup_{X \in (L^0)^d} \sup_{m \ge 1} \inf_{n \ge m} \crl{\ang{X,Y_n} - f(X)}\\
&=& \esssup_{X \in (L^0)^d} \crl{\ang{X,Y} - f(X)} = f^*(Y).
\eeas
Hence, $f^*$ is sequentially lsc. To show that it is $L^0$-convex, choose 
$Y,Z \in (L^0)^d$ and $\lambda \in L^0$ such that $0 \le \lambda \le 1$.
Then, $\lambda f^*(Y) + (1-\lambda)f^*(Z) \ge \ang{X, \lambda Y + (1-\lambda)Z} - f(X)$
for all $X \in (L^0)^d$ and therefore, $\lambda f^*(Y) + (1-\lambda)f^*(Z) \ge f^*(\lambda Y +
(1-\lambda)Z)$.
(ii) is immediate from the definition of $f^*$. 
Now assume that $f(X) \in L^0$. For any $X' \in (L^0)^d$, 
$f(X') -f(X) \ge \ang{X'-X,Y}$ is equivalent to 
$\ang{X,Y} - f(X) \ge \ang{X',Y} - f(X')$. This shows (iii).
(iv) is clear. From (ii) one obtains that $f(X) \ge \ang{X,Y} - f^*(Y)$ for all $X,Y \in (L^0)^d$.
So $f \ge f^{**}$. The same inequality applied to $f^*$ gives 
$f^* \ge f^{***}$. On the other hand, we know from (iv) that $f^* \le f^{***}$.
This proves $(v)$.
\end{proof}

\begin{lemma} \label{lemmalscf}
Let $f : (L^0)^d \to \overline{L}$ be a
sequentially lsc $L^0$-convex function. Then one has for 
all $X \in (L^0)^d$,
$$
f(X) = \esssup \crl{f^{Y,Z}(X) : (Y,Z) \in (L^0)^d \times L^0, \, f \ge f^{Y,Z}}.
$$
\end{lemma}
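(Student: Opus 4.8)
The plan is to carry out, in conditional form, the classical epigraph-separation proof that a lower semicontinuous convex function equals the pointwise supremum of the affine functions below it. Denote by $g(X)$ the right-hand side of the asserted identity. Since $f\ge f^{Y,Z}$ forces $f^{Y,Z}(X)\le f(X)$, one always has $g\le f$, so the real content is $g\ge f$. If $f\equiv+\infty$, every $f^{Y,Z}$ lies below $f$ and $g\equiv+\infty=f$; in general, a routine exhaustion argument using stability of $f$ (Lemma~\ref{lemmastablef}) gives a largest set $G\in{\cal F}$ carrying some $X$ with $f(X)<+\infty$, on $G^{c}$ one has $f\equiv+\infty$ and hence $g\equiv+\infty$, and so one reduces to $G$ and may assume there is $X^*$ with $f(X^*)\in L^0$.

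I would then pass to the epigraph $C:=\crl{(X,V)\in(L^0)^d\times L^0:f(X)\le V}$, regarded as a subset of $(L^0)^{d+1}$. By Lemma~\ref{lemmastablef}, $f$ is stable, so $C$ is stable; sequential lower semicontinuity of $f$ makes $C$ sequentially closed, hence $\sigma$-stable by Corollary~\ref{corclstable}; and $L^0$-convexity of $f$ makes $C$ $L^0$-convex. After the reduction $C\neq\emptyset$. The elementary fact used repeatedly is: if $v<f(W)$, then $1_B(W,v)\notin 1_BC$ for all $B\in{\cal F}_+$, since otherwise stability of $f$ would give $1_Bf(W)=1_Bv$. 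Applying this to the $L^0$-bounded singleton $\crl{(X^*,f(X^*)-1)}$ and invoking Corollary~\ref{corsepcl} yields $(Y^*,\beta^*)\in(L^0)^{d+1}$ with $\N{(Y^*,\beta^*)}>0$ strongly separating it from $C$; since $C$ is closed under adding nonnegative constants to the last coordinate, $\beta^*\ge0$, and $\beta^*>0$ because $X^*\in\dom f$ excludes a purely vertical separator. Dividing by $\beta^*$ gives a global $L^0$-affine minorant $\ell_0=f^{Y_0,Z_0}\le f$.

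Now I would fix $X_0\in(L^0)^d$ and, for $n\in\mathbb{N}$, put $r_n:=\min(f(X_0),n)-1/n\in L^0$, so that $r_n<f(X_0)$ everywhere and $r_n\uparrow f(X_0)$ a.e. By the observation above and Corollary~\ref{corsepcl}, there is $(Y_n,\beta_n)\in(L^0)^{d+1}$ with $\N{(Y_n,\beta_n)}>0$ and
$$
\essinf_{(X,V)\in C}\brak{\ang{X,Y_n}+V\beta_n}>\ang{X_0,Y_n}+r_n\beta_n ,
$$
and again $\beta_n\ge0$. On $B_n:=\crl{\beta_n>0}$, dividing by $\beta_n$ and inserting $V=f(X)$ shows $f\ge f^{-Y_n,\,\ang{X_0,Y_n}+r_n}$ on $B_n$, and patching this function on $B_n$ together with $\ell_0$ on $B_n^{c}$ produces a global minorant of $f$ with value $r_n$ at $X_0$ on $B_n$. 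On $B_n^{c}:=\crl{\beta_n=0}$, one checks first that $f(X_0)=+\infty$ there (a finite value on a positive subset, patched into $\dom f$ by stability, would violate $\ang{X_0,Y_n}<\essinf_{X\in\dom f}\ang{X,Y_n}$ on $B_n^{c}$); then, choosing $\alpha_n\in L^0$ with $\essinf_{X\in\dom f}\ang{X,Y_n}>\alpha_n>\ang{X_0,Y_n}$ on $B_n^{c}$, the $L^0$-affine functions $X\mapsto\ell_0(X)+\lambda\brak{\alpha_n-\ang{X,Y_n}}$, $\lambda\in 1_{B_n^{c}}L^0_+$, extended by $\ell_0$ over $B_n$, are minorants of $f$ whose value at $X_0$ tends to $+\infty$ on $B_n^{c}$ as $\lambda\to\infty$. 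Hence, for each $n$, $g(X_0)\ge r_n$ on $B_n$ and $g(X_0)=+\infty$ on $B_n^{c}$, so $g(X_0)\ge r_n$ on all of $\Omega$; letting $n\to\infty$ gives $g(X_0)\ge f(X_0)$, which together with $g\le f$ finishes the proof.

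The hard part is the ``vertical separator'' case $\beta_n=0$, corresponding to $X_0$ lying outside the essential domain of $f$: the separating functional then conveys no information about the value at $X_0$, and one must tilt the fixed minorant $\ell_0$ by it and use $\sigma$-stability to assemble genuine $L^0$-affine minorants of $f$ that blow up at $X_0$. The remaining ingredients---the reduction to $\dom f\neq\emptyset$, the ``disjoint on every $B\in{\cal F}_+$'' checks required for strong separation, the $\sigma$-stability of the epigraph, and the patching of local minorants into global ones---are conceptually routine but do use the conditional framework in an essential way.
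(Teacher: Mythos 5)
Your proof is correct and follows essentially the same route as the paper: reduce to $\dom f \neq \emptyset$ via stability, strongly separate points strictly below the sequentially closed $L^0$-convex epigraph using Corollary \ref{corsepcl}, and handle the vertical-separator set $\crl{\beta_n = 0}$ by tilting a fixed global affine minorant with the resulting linear inequality. The only cosmetic difference is that you let the tilt parameter $\lambda$ tend to $+\infty$ on that set, whereas the paper picks a single sufficiently large coefficient $\delta$ to beat the prescribed level $W$; the two are equivalent.
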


\begin{proof}
Note that the set 
$$
{\cal A} := \crl{A \in {\cal F} : \mbox{ there exists an $X \in (L^0)^d$ such that 
$1_A f(X) \in L^0$}}
$$
is directed upwards. Therefore, there exists an increasing sequence 
$A_n$ in ${\cal A}$ with corresponding $X_n$, $n \in \mathbb{N}$, 
such that $A_n \uparrow A := \esssup {\cal A}$ a.e. Set
$$
X_0 := 1_{A_1 \cup A^c} X_1 + \sum_{n \ge 2} 1_{A_n \setminus A_{n-1}} X_n.
$$
By Lemma \ref{lemmastablef}, $f$ is stable. Hence,
$f(X_0) < + \infty$ on $A$, and $f(X) = + \infty$ on $A^c$ for all $X \in (L^0)^d$.
The lemma can be proved on $A$ and $A^c$ separately, and on $A^c$ it is obvious.
Therefore, we can assume $A = \Omega$.
Then $\dom f \not= \emptyset$, and it follows that
$$
C := \crl{(X,V) \in \dom f \times L^0 : f(X) \le V}
$$
is a non-empty sequentially closed $L^0$-convex subset of $(L^0)^d \times L^0$.
Choose a pair $(U,W) \in (L^0)^d \times L^0$ such that 
$1_A (U,W) \notin 1_A C$ for all $A \in {\cal F}_+$.
By Corollary \ref{corsepcl}, there exists $(Y,Z) \in (L^0)^d \times L^0$ such that
$$
I := \inf_{(X,V) \in C} \crl{\ang{X,Y} + VZ} > \ang{U,Y} + W Z.
$$
It follows that $Z \ge 0$. On the set $B := \crl{Z > 0}$ one can 
multiply $(Y,Z)$ with $1/Z$ and assume $Z = 1$. Then one obtains that on $B$,
$$
f(X) \ge f^{-Y,I}(X) \quad \mbox{for all } X \in (L^0)^d \quad \mbox{and} \quad f^{-Y,I}(U) > W.
$$
On $B^c$ one has $\lambda := I - \ang{U,Y} >0$. Pick a $U' \in \dom f$. 
Since $1_A(U', f(U') -1) \notin 1_A C$ for all $A \in {\cal F}_+$, one obtains from 
Corollary \ref{corsepcl} that there exists a pair $(Y',Z') \in (L^0)^d \times L^0$ such that
$$
I' := \inf_{(X,V) \in C} \crl{\ang{X,Y'} + VZ'} > \ang{U',Y'} + (f(U')-1) Z'.
$$
Since $U' \in \dom f$, one must have $Z' > 0$. By multiplying with $1/Z'$, one can assume $Z' = 1$.
Now choose a $\delta \in 1_{B^c} L^0_+$ such that
$$
\delta > \frac{1}{\lambda}(W + \ang{U,Y'} - I')^+ \quad \mbox{on } B^c
$$
and set $Y'' := \delta Y + Y'$. Then, on $B^c$,
\beas
I'' &:=& \inf_{(X,V) \in C} (\ang{X,Y''} + V) \ge \delta I + I' 
= \delta \lambda + \delta \ang{U,Y} + I' >  \ang{U,Y''} + W.
\eeas
So on $B^c$, one has 
$$
f(X) \ge f^{-Y'',I''}(X) \quad \mbox{for all } X \in (L^0)^d \quad \mbox{and} \quad f^{-Y'',I''}(U) > W.
$$
Now define $(\hat{Y}, \hat{I}) := 1_B(-Y,I) + 1_{B^c}(-Y'',I'')$. Then 
$$
f(X) \ge f^{\hat{Y},\hat{I}}(X) \quad \mbox{for all }X \in (L^0)^d \quad \mbox{and} \quad f^{\hat{Y},\hat{I}}(U) > W.
$$
This proves the lemma.
\end{proof}

\begin{theorem} \label{thmFM} {\bf (Conditional version of the Fenchel--Moreau theorem)}\\
Let $f : (L^0)^d \to \overline{L}$ be a function such that
$\underline{\conv} f$ takes values in $\overline{L}$. Then 
$\underline{\conv} f = f^{**}$. 
In particular, if $f$ is sequentially lsc and $L^0$-convex, then $f = f^{**}$.
\end{theorem}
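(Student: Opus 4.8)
The plan is to assemble the theorem from the two preceding lemmas: Lemma~\ref{lemmamon*} will give the ``easy'' inequality $f^{**} \le \underline{\conv} f$, while Lemma~\ref{lemmalscf}, applied to $\underline{\conv} f$ itself, will give the reverse one once the $L^0$-affine minorants of $\underline{\conv} f$ have been identified with those of $f$.

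First I would prove $f^{**} \le \underline{\conv} f$. Applying Lemma~\ref{lemmamon*}(i) with $f^*$ in the role of $f$, the biconjugate $f^{**} = (f^*)^*$ is sequentially lsc and $L^0$-convex, and by Lemma~\ref{lemmamon*}(v) it lies below $f$. Since $\underline{\conv} f$ is, by definition, the largest sequentially lsc $L^0$-convex function below $f$, this immediately gives $f^{**} \le \underline{\conv} f$.

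For the reverse inequality I would set $g := \underline{\conv} f$. By hypothesis $g$ takes values in $\overline{L}$, and it is sequentially lsc and $L^0$-convex, so Lemma~\ref{lemmalscf} applies and expresses $g(X)$ as the essential supremum of $f^{Y,Z}(X) = \ang{X,Y}+Z$ over all pairs $(Y,Z) \in (L^0)^d \times L^0$ with $g \ge f^{Y,Z}$. The key point is that this index set equals $\crl{(Y,Z) : f \ge f^{Y,Z}}$: each $f^{Y,Z}$ is $L^0$-affine, hence $L^0$-convex, and sequentially continuous by Corollary~\ref{coraffcont}, hence sequentially lsc, so $f \ge f^{Y,Z}$ forces $f^{Y,Z} \le \underline{\conv} f = g$ by maximality, while the converse inclusion follows from $g \le f$. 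Next I would note that $f \ge f^{Y,Z}$ says exactly $Z \le \essinf_{X \in (L^0)^d}\crl{f(X) - \ang{X,Y}} = -f^*(Y)$ (so in particular $f^*(Y) \in \underline{L}$), whence $f^{Y,Z}(X) = \ang{X,Y}+Z \le \ang{X,Y} - f^*(Y) \le f^{**}(X)$ directly from the definition of $f^{**}$. Taking the essential supremum over all admissible $(Y,Z)$ in the Lemma~\ref{lemmalscf} formula for $g$ then yields $g \le f^{**}$, and combined with the first step, $\underline{\conv} f = g = f^{**}$.

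The ``in particular'' is then immediate: if $f$ is already sequentially lsc and $L^0$-convex, then $\underline{\conv} f = f$, which in particular takes values in $\overline{L}$, so the first part applies and gives $f = \underline{\conv} f = f^{**}$. I expect no genuinely hard step here, since the real substance is carried by Lemma~\ref{lemmalscf}; the only things to be careful about are the extended-real arithmetic conventions (so that $\ang{X,Y} - f^*(Y)$ and $\essinf_X\crl{f(X)-\ang{X,Y}}$ are well-defined and behave, including the degenerate case $f \equiv +\infty$) and the appeal to the maximality in the definition of $\underline{\conv} f$ when matching up the $L^0$-affine minorants of $g$ with those of $f$.
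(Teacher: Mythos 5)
Your proposal is correct and follows essentially the same route as the paper: the easy inequality $f^{**}\le\underline{\conv}f$ via Lemma~\ref{lemmamon*}, and the reverse via Lemma~\ref{lemmalscf} applied to $\underline{\conv}f$ together with the observation that each admissible $L^0$-affine minorant $f^{Y,Z}$ lies below $f^{**}$ (the paper phrases this last step as $f^{**}\ge(f^{Y,Z})^{**}=f^{Y,Z}$ using Lemma~\ref{lemmamon*}(iv), while you compute $Z\le-f^*(Y)$ directly; these are equivalent). Your explicit identification of the affine minorants of $\underline{\conv}f$ with those of $f$ is a point the paper leaves implicit, and is a welcome clarification.
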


\begin{proof}
We know from Lemma \ref{lemmamon*} that $f^{**}$ is a 
sequentially lsc $L^0$-convex minorant
of $f$. So $\underline{\conv} f \ge f^{**}$.
On the other hand, it follows from Lemma \ref{lemmalscf} that
$$\underline{\conv} f = \esssup \crl{f^{Y,Z}(X) : (Y,Z) \in (L^0)^d \times L^0, \, 
\underline{\conv} f \ge f^{Y,Z}},
$$
and it can easily be checked that $(f^{Y,Z})^{**} = f^{Y,Z}$ for all $(Y,Z) \in (L^0)^d \times L^0$.
So one obtains from Lemma \ref{lemmamon*} that 
$f^{**} \ge (f^{Y,Z})^{**} = f^{Y,Z}$ for every pair $(Y,Z) \in (L^0)^d \times L^0$ 
satisfying $f \ge f^{Y,Z}$. This shows that $f^{**} \ge \underline{\conv} f$.
\end{proof}

\begin{lemma} \label{lemmag}
Let $f : (L^0)^d \to L$ be an $L^0$-convex function and $X_0 \in (L^0)^d$ such that
$f(X_0) \in L^0$. Then $f'(X_0;X)$ exists for all $X \in (L^0)^d$, $f'(X_0,0) = 0$ and
$f'(X_0;.)$ is $L^0$-sublinear. Moreover, $\partial f(X_0) = \partial g(0)$, where $g(X) := f'(X_0;X)$.
\end{lemma}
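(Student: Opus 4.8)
The whole statement rests on the monotonicity of difference quotients, exactly as in the classical case. Fix $X\in(L^0)^d$ and, for $\lambda\in L^0_{++}$, put $q_X(\lambda):=\lambda^{-1}\edg{f(X_0+\lambda X)-f(X_0)}\in L$, which is well defined because $f(X_0)\in L^0$, using the conventions $+\infty-\infty=+\infty$ and $0\cdot(\pm\infty)=0$. If $\lambda\le\mu$ in $L^0_{++}$, then $X_0+\lambda X=\frac{\lambda}{\mu}(X_0+\mu X)+(1-\frac{\lambda}{\mu})X_0$ with $0<\frac{\lambda}{\mu}\le 1$, so $L^0$-convexity of $f$ gives $f(X_0+\lambda X)-f(X_0)\le\frac{\lambda}{\mu}\edg{f(X_0+\mu X)-f(X_0)}$ and hence $q_X(\lambda)\le q_X(\mu)$. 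In particular $(q_X(1/n))_{n\in\mathbb{N}}$ is non-increasing, so it converges a.e.\ in $L$; this shows that $f'(X_0;X)=\lim_{n\to\infty}q_X(1/n)$ exists, and $f'(X_0;0)=0$ is immediate since the corresponding quotients all vanish.

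Write $g(X):=f'(X_0;X)$. The next step I would carry out is to show that $g(X)=\essinf_{\lambda\in L^0_{++}}q_X(\lambda)$ and, more generally, that $q_X(c_n)\to g(X)$ a.e.\ whenever $c_n\in L^0_{++}$ decreases a.e.\ to $0$. For the first identity, $\essinf_n q_X(1/n)=\lim_n q_X(1/n)=g(X)$, and for $\lambda\in L^0_{++}$, choosing $N\in\mathbb{N}({\cal F})$ with $N\ge 1/\lambda$ gives $1/N\le\lambda$, hence $g(X)=\essinf_n q_X(1/n)\le q_X(1/N)\le q_X(\lambda)$ by monotonicity; since each $q_X(1/n)$ is itself of the form $q_X(\lambda)$, this yields $g(X)=\essinf_{\lambda\in L^0_{++}}q_X(\lambda)$. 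For the general convergence, $q_X(c_n)\ge g(X)$ for all $n$ by what was just shown, while for each fixed $m$ one has $c_n\le 1/m$ for all $n$ large enough (pointwise a.e.), so $q_X(c_n)\le q_X(1/m)$ eventually and $\limsup_n q_X(c_n)\le q_X(1/m)$; intersecting over $m$ gives $\lim_n q_X(c_n)=g(X)$.

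Positive homogeneity is now formal: for $\lambda\in L^0_{++}$ the step-$\mu$ difference quotient of $f$ at $X_0$ in direction $\lambda X$ equals $\lambda\,q_X(\mu\lambda)$, so applying the previous paragraph in direction $\lambda X$, using the substitution $\nu=\mu\lambda$ (a bijection of $L^0_{++}$), and pulling out the positive factor $\lambda$,
$$g(\lambda X)=\essinf_{\mu\in L^0_{++}}\lambda\,q_X(\mu\lambda)=\lambda\essinf_{\nu\in L^0_{++}}q_X(\nu)=\lambda g(X).$$
For subadditivity, apply $L^0$-convexity to $X_0+\tfrac1n(X+Y)=\tfrac12(X_0+\tfrac2nX)+\tfrac12(X_0+\tfrac2nY)$ and multiply by $n$ to obtain $n\edg{f(X_0+\tfrac1n(X+Y))-f(X_0)}\le q_X(2/n)+q_Y(2/n)$; letting $n\to\infty$ and using the second paragraph with $c_n=2/n$ gives $g(X+Y)\le g(X)+g(Y)$. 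Since $f$ is stable by Lemma \ref{lemmastablef}, $g$ is stable as well (pass to the a.e.\ limit in the stability identity for the maps $X\mapsto n\edg{f(X_0+X/n)-f(X_0)}$). Stability, positive homogeneity and subadditivity together give the $L^0$-convexity inequality for $g$: split $\Omega$ according to whether $\lambda\in\crl{0}$, $0<\lambda<1$, or $\lambda=1$ and use positive homogeneity and subadditivity on the middle piece. Hence $g$ is $L^0$-sublinear.

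Finally, $g(0)=0\in L^0$, so $Y\in\partial g(0)$ means precisely that $g(X)\ge\ang{X,Y}$ for all $X\in(L^0)^d$. If $Y\in\partial f(X_0)$, then $q_X(1/n)=n\edg{f(X_0+X/n)-f(X_0)}\ge n\ang{X/n,Y}=\ang{X,Y}$ for every $n$, so $g(X)=\lim_n q_X(1/n)\ge\ang{X,Y}$ and $Y\in\partial g(0)$. Conversely, if $g(X)\ge\ang{X,Y}$ for all $X$, then $f(X_0+X)-f(X_0)=q_X(1)\ge\essinf_n q_X(1/n)=g(X)\ge\ang{X,Y}$, so $Y\in\partial f(X_0)$. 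Thus $\partial f(X_0)=\partial g(0)$. The points requiring genuine care are the bookkeeping with $\pm\infty$ (all covered by the stated conventions) and the limit passages along the auxiliary step sequences $2/n$ and $\mu\lambda$, where the monotonicity of $q_X$ is essential; that is the step I would be most careful about.
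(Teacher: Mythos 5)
Your proof is correct and follows the same route as the paper: the paper's entire argument is that $L^0$-convexity makes $n[f(X_0+X/n)-f(X_0)]$ decreasing in $n$, whence the limit exists, and it declares the sublinearity and the identity $\partial f(X_0)=\partial g(0)$ "straightforward to check." You have simply supplied the checks (the essinf characterization of $g$, homogeneity via the substitution $\nu=\mu\lambda$, subadditivity via the midpoint identity, and the two subgradient inclusions), all of which are sound.
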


\begin{proof}
It follows from $L^0$-convexity that for every $X \in (L^0)^d$, 
$n[f(X_0+ X/n) - f(X_0)]$ is decreasing in $n$. This implies that 
$f'(X_0;X)$ exists. $f'(X_0;0) = 0$ is clear. That
$f'(X_0;.)$ is $L^0$-sublinear and $\partial f(X_0) = \partial g(0)$ are straightforward to check.
\end{proof}

\begin{lemma} \label{lemmasublinear}
Let $f : (L^0)^d \to \overline{L}$ be a sequentially lsc $L^0$-sublinear function.
If there exists an $X_0 \in (L^0)^d$ such that $f(X_0) \in L^0$, then $\partial f(0) \not= \emptyset$ and
$f(X) = \esssup_{Y \in \partial f(0)} \ang{X,Y}$ for all $X \in (L^0)^d$.
In particular, $f(0) = 0$.
\end{lemma}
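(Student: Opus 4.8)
The plan is to reduce the statement to an application of the conditional Fenchel–Moreau theorem (Theorem \ref{thmFM}) together with the existence of conditional subgradients (Theorem \ref{thmexsubdiff}). First I would observe that since $f$ is $L^0$-sublinear and $f(X_0) \in L^0$ for some $X_0$, we have $f(0) = f(\lambda \cdot 0) = \lambda f(0)$ for every $\lambda \in L^0_{++}$; taking, say, $\lambda = 2$ forces $f(0) \in \crl{0, +\infty, -\infty\}$ pointwise, and $L^0$-sublinearity together with $f(X_0) + f(-X_0) \ge 2 f(0)$ (applying the $L^0$-convexity inequality at $\lambda = 1/2$ to $X_0$ and $-X_0$, using $f(-X_0) \le 2f(-X_0/2) \le \dots$ — more directly, $0 = \tfrac12 X_0 + \tfrac12(-X_0)$ gives $f(0) \le \tfrac12 f(X_0) + \tfrac12 f(-X_0)$) rules out $+\infty$ on a set of positive measure, and sequential lower semicontinuity applied to the constant sequence $0$ together with $f(X/n) = \tfrac1n f(X)$ rules out $-\infty$. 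Hence $f(0) = 0$, which in particular gives the last assertion and shows $f(0) \in L^0$.

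Next I would show $\partial f(0) \neq \emptyset$. Since $f$ is $L^0$-sublinear it is $L^0$-convex with $f(0) = 0 \in L^0$, so by Lemma \ref{lemmamon*}(iii) it suffices to find $Y$ with $f^*(Y) = \langle 0, Y\rangle - f(0) = 0$, i.e. $f^*(Y) \le 0$ (the reverse inequality $f^*(Y) \ge -f(0) = 0$ is automatic from (ii)). Equivalently, I want $\langle X, Y\rangle \le f(X)$ for all $X$. To produce such a $Y$, I would apply Theorem \ref{thmexsubdiff}: the obstacle is that $X_0$ need not lie in $\ri(\dom f)$, so I cannot directly invoke it at an arbitrary point. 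Instead I would separate the $L^0$-convex, $\sigma$-stable epigraph $C := \crl{(X,V) \in (L^0)^d \times L^0 : f(X) \le V}$ from the point $(0,-1)$: since $f(0) = 0$, one has $1_A(0,-1) \notin 1_A C$ for all $A \in {\cal F}_+$, and $C - \crl{(0,0)\}$ is an $L^0$-convex cone (using sublinearity: $\lambda(X,V) \in C$ for $\lambda \in L^0_{++}$ whenever $(X,V) \in C$), so $(0,0) \notin \inn_A(C - (0,0))$ for all $A$. Theorem \ref{thmweaksep} then yields $(Z,W)$ with $\N{Z} + |W| > 0$ and $\essinf_{(X,V)\in C}\crl{\ang{X,Z} + VW} \ge 0$; as in the proof of the Hahn–Banach corollary, testing with $(0,V)$ for large $V$ forces $W > 0$, and rescaling gives $W = 1$, so $f(X) \ge \ang{X,-Z}$ for all $X$, i.e. $-Z \in \partial f(0)$.

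Finally, for the representation $f(X) = \esssup_{Y \in \partial f(0)} \ang{X,Y}$, I would argue that the right-hand side, call it $g(X)$, satisfies $g \le f$ (by definition of $\partial f(0)$) and that $g$ is sequentially lsc, $L^0$-convex and in fact $L^0$-sublinear. For the reverse inequality $f \le g$ I would invoke the conditional Fenchel–Moreau theorem: $f$ is sequentially lsc and $L^0$-convex with values in $\overline{L}$ (having just excluded $-\infty$ is not needed globally, but $\underline{\conv}f = f$ takes values in $\overline{L}$ since $f$ does and $f$ is already sequentially lsc $L^0$-convex), so $f = f^{**}$ by Theorem \ref{thmFM}. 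Now $f^{**}(X) = \esssup_{Y}\crl{\ang{X,Y} - f^*(Y)}$, and by Lemma \ref{lemmamon*}(iii) the set $\crl{Y : f^*(Y) = -f(0) = 0\}$ is exactly $\partial f(0)$; moreover by $L^0$-sublinearity $f^*$ only takes the values $0$ and $+\infty$ (if $\langle X,Y\rangle - f(X) > 0$ on a set $A$ of positive measure for some $X$, replacing $X$ by $nX$ and using $f(nX) = nf(X)$ sends this to $+\infty$ on $A$), so the essential supremum defining $f^{**}$ is effectively taken over $\partial f(0)$. Combining, $f = f^{**} = g$. The main obstacle is the non-emptiness of $\partial f(0)$ without assuming $0 \in \ri(\dom f)$, which is handled by the direct separation argument above; the rest is bookkeeping with the conjugate and the already-established $f(0) = 0$.
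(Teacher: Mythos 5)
Your main line --- apply Theorem \ref{thmFM} to get $f=f^{**}$, observe that sublinearity forces $f^*$ to take only the values $0$ and $+\infty$, and identify $\crl{Y : f^*(Y)=0}$ with $\partial f(0)$ via Lemma \ref{lemmamon*}(iii) --- is exactly the paper's proof. The genuine gap is your separation argument for $\partial f(0)\neq\emptyset$. Weak separation of the epigraph $C$ from the origin gives $(Z,W)$ with $\essinf_{(X,V)\in C}\crl{\ang{X,Z}+VW}\ge 0$, and testing with $(0,V)$, $V\ge 0$, only yields $W\ge 0$; it does \emph{not} force $W>0$. (In the Hahn--Banach corollary $W>0$ comes out because there $f$ is finite everywhere, so $\dom f=(L^0)^d$ and $W=0$ on some $A\in{\cal F}_+$ would force $Z=0$ on $A$; here $\dom f$ can be a proper cone.) Concretely, for $d=1$ and $f(X):=+\infty\cdot 1_{\crl{X<0}}$, which is sequentially lsc and $L^0$-sublinear with $f(0)=0$ and $\partial f(0)=\crl{Y\le 0}$, the pair $(Z,W)=(1,0)$ is a perfectly legitimate output of Theorem \ref{thmweaksep}, and no rescaling turns it into a subgradient. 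Since $0$ need not lie in $\ri(\dom f)$ (as this example shows), neither Theorem \ref{thmexsubdiff} nor a one-shot separation settles non-emptiness; one would need the kind of two-stage patching done in Lemma \ref{lemmalscf}.

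The repair is already contained in your step 3 and is how the paper argues: since $f^{**}(X_0)=f(X_0)\in L^0$, while any $Y$ contributes $-\infty$ to the supremum defining $f^{**}(X_0)$ on the set where $f^*(Y)=+\infty$, the sets on which some $Y$ satisfies $f^*(Y)=0$ must essentially cover $\Omega$; gluing such $Y$ along a countable partition (legitimate because $f^*$ is $L^0$-convex, hence stable by Lemma \ref{lemmastablef}) produces a single $Y$ with $f^*(Y)=0$ on all of $\Omega$, i.e.\ $Y\in\partial f(0)$. The same gluing is what justifies your phrase that the essential supremum defining $f^{**}$ is ``effectively taken over $\partial f(0)$''. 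A smaller slip in your first paragraph: the inequality $f(0)\le\tfrac12 f(X_0)+\tfrac12 f(-X_0)$ is vacuous when $f(-X_0)=+\infty$; what rules out $f(0)=+\infty$ is lower semicontinuity along $X_0/n\to 0$, giving $f(0)\le\lim_n\tfrac1n f(X_0)=0$, while $f(0)=-\infty$ is excluded simply because $f$ maps into $\overline{L}$. The conclusion $f(0)=0$ is correct either way.
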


\begin{proof}
By Theorem \ref{thmFM}, one has $f = f^{**}$. This implies that the set 
$$
C := \crl{Y \in (L^0)^d : \ang{X,Y} \le f(X) \mbox{ for all } X \in (L^0)^d}
$$
is non-empty and $f(X) = \esssup_{Y \in C} \ang{X,Y}$.
It follows that $f(0) = 0$ and $\partial f(0) = C$. This proves the lemma.
\end{proof}

\begin{theorem}
Let $f : (L^0)^d \to \overline{L}$ be an $L^0$-convex function. Assume there exist $X_0 \in (L^0)^d$ 
and $V \in L^0_+$ such that $f(X_0) \in L^0$ and 
\be \label{boundbelow}
f(X_0 + X) \ge f(X_0) - V \N{X} \quad \mbox{for all } X \in (L^0)^d.
\ee
Then there exists a $Y \in \partial f(X_0)$ such that $\N{Y} \le V$.
\end{theorem}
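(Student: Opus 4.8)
The plan is to reduce to $X_0 = 0$, $f(X_0) = 0$ and then separate the epigraph of $f$ from the hypograph of the concave function $X \mapsto -V\N{X}$ by means of the weak separation theorem. First I would replace $f$ by $\tilde f(X) := f(X_0 + X) - f(X_0)$: this is again $L^0$-convex and $\overline{L}$-valued, one has $\tilde f(0) = 0$, $\partial f(X_0) = \partial \tilde f(0)$, and \eqref{boundbelow} turns into $\tilde f(X) \ge -V\N{X}$ for all $X \in (L^0)^d$. Thus it suffices to find, under the assumptions $f(0) = 0$ and $f(X) \ge -V\N{X}$, some $Y \in \partial f(0)$ with $\N{Y} \le V$.

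Next I would introduce the subsets of $(L^0)^d \times L^0$
\[
C := \crl{(X,t) \in (L^0)^d \times L^0 : f(X) \le t} \quad \mbox{and} \quad
D := \crl{(X,s) \in (L^0)^d \times L^0 : s \le -V\N{X}}.
\]
Both are non-empty and $L^0$-convex ($D$ because $X \mapsto -V\N{X}$ is concave and $V \ge 0$). By Lemma \ref{lemmastablef}, $f$ is stable, so $C$ is $\sigma$-stable (exactly as in the proof of Theorem \ref{thmexsubdiff}); $D$ is obviously $\sigma$-stable. The crucial point is to show that $(0,0) \notin \inn_A(C-D)$ for every $A \in {\cal F}_+$: if this failed, there would exist $\varepsilon \in L^0_{++}$ with $1_A(0,-\varepsilon) \in 1_A(C-D)$, hence $(X_1,t_1) \in C$ and $(X_2,t_2) \in D$ with $X_1 = X_2$ and $t_1 - t_2 = -\varepsilon$ on $A$; but then $f(X_1) \le t_1 = t_2 - \varepsilon \le -V\N{X_1} - \varepsilon$ on $A$, contradicting $f \ge -V\N{\cdot}$.

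Granted the crucial point, Theorem \ref{thmweaksep} provides a pair $(Y',W) \in (L^0)^d \times L^0$ with $\N{(Y',W)} > 0$ and $\essinf_{(X,t) \in C}\crl{\ang{X,Y'} + tW} \ge \esssup_{(X,s) \in D}\crl{\ang{X,Y'} + sW}$. Plugging in $(0,t) \in C$ with $t \uparrow \infty$ and $(0,s) \in D$ with $s \downarrow -\infty$ forces $W \ge 0$; and if $1_{A'}W = 0$ for some $A' \in {\cal F}_+$, the inequality restricted to $A'$ reads $\essinf_{X \in \dom f}\ang{X,Y'} \ge \esssup_{X \in (L^0)^d}\ang{X,Y'}$ (the first-coordinate projections of $C$ and $D$ being $\dom f \ni 0$ and all of $(L^0)^d$), which forces $1_{A'}Y' = 0$ and hence $\N{(Y',W)} = 0$ on $A'$, a contradiction. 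Hence $W > 0$; dividing by $W$ and setting $Y := Y'/W$ yields $\essinf_{X \in \dom f}\crl{\ang{X,Y} + f(X)} \ge \esssup_{X \in (L^0)^d}\crl{\ang{X,Y} - V\N{X}}$.

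To conclude, observe that the right-hand side of this last inequality equals $0$ where $\N{Y} \le V$ (attained at $X = 0$) and equals $+\infty$ on $\crl{\N{Y} > V}$ (insert $X = tY$ and let $t \to \infty$), while the left-hand side is at most $\ang{0,Y} + f(0) = 0$; therefore both sides vanish and $\N{Y} \le V$ a.e. From the left-hand side being $0$ we get $\ang{X,Y} + f(X) \ge 0$ for all $X \in (L^0)^d$ (trivially off $\dom f$), i.e. $f(X) \ge \ang{X,-Y}$ for all $X$, so $-Y \in \partial f(0)$ with $\N{-Y} = \N{Y} \le V$; undoing the first-step reduction gives the theorem. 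I expect the main obstacle to be establishing $(0,0) \notin \inn_A(C-D)$, and a secondary delicate point to be ruling out $W = 0$ — both resting on $0 \in \dom f$ and on the lower bound $f \ge -V\N{\cdot}$.
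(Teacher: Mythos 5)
Your proof is correct, but it follows a genuinely different route from the paper's. The paper works with the $L^0$-directional derivative $g(X)=f'(X_0;X)$ and its sequentially lsc $L^0$-convex hull $h=\underline{\conv}\,g$, notes that $h\ge -V\N{\cdot}$ and $\partial h(0)\subseteq\partial g(0)=\partial f(X_0)$, and then argues by contradiction: if $\partial h(0)$ and the ball $B^V(0)$ were disjoint on a set of positive measure, strong separation (Corollary \ref{corsepcl}, applicable because the ball is sequentially closed and $L^0$-bounded) would produce a $Z$ with $-V\N{Z}>\esssup_{Y\in\partial h(0)}\ang{Y,Z}=h(Z)$, the last equality coming from the representation of sublinear functions in Lemma \ref{lemmasublinear} (hence ultimately from the conditional Fenchel--Moreau theorem). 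You instead apply weak separation (Theorem \ref{thmweaksep}) once, in $(L^0)^{d+1}$, to the epigraph of the recentered $f$ and the hypograph of $-V\N{\cdot}$, and read off a $Y$ that is simultaneously a subgradient and satisfies $\N{Y}\le V$. Your key step $(0,0)\notin\inn_A(C-D)$ and the elimination of $W=0$ are both sound (the latter because $0\in\dom f$ while the first-coordinate projection of $D$ is all of $(L^0)^d$), and the identification of the essential supremum over $D$ as $0$ on $\crl{\N{Y}\le V}$ and $+\infty$ on $\crl{\N{Y}>V}$ is exactly what forces the norm bound. What your approach buys is economy: it bypasses the directional-derivative and biconjugation machinery (Lemmas \ref{lemmag}, \ref{lemmasublinear}, Theorem \ref{thmFM}) and even the separate existence result for subgradients, obtaining existence and the bound in one stroke; the paper's version, in exchange, reuses structural results already established and exhibits the bounded subgradient as an element of $\partial h(0)\cap B^V(0)$. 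One cosmetic point: your final passage from $\essinf_{X\in\dom f}\crl{\ang{X,Y}+f(X)}\ge 0$ to $f(X)\ge\ang{X,-Y}$ for \emph{all} $X$ needs the stability of $f$ to localize to the set where $f(X)<+\infty$ when $X\notin\dom f$; this is routine given Lemma \ref{lemmastablef} but worth a sentence.
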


\begin{proof}
Denote $g(X) := f'(X_0;X)$. Then $h = \underline{\conv}g$ is a
sequentially lsc $L^0$-sublinear function which by \eqref{boundbelow}, satisfies 
\be \label{hboundbelow}
h(X) \ge - V \N{X} \quad \mbox{for all} \quad X \in (L^0)^d.
\ee 
It follows that 
$h(0) = 0$ and $\partial h(0) \subseteq \partial g(0) = \partial f(X_0)$. Since 
$\partial h(0)$ and \[B^V(0) := \crl{Y \in (L^0)^d : \N{Y} \le V}\] are
$L^0$-convex and sequentially closed, they are both $\sigma$-stable.
Therefore, there exists a largest set $A \in {\cal F}$ 
such that $1_A \partial h(0) \cap 1_A B^V(0)$ is non-empty.
Assume that $A^c \in {\cal F}_+$. Then, if one restricts attention to $A^c$ and assumes 
$\Omega = A^c$, the sets $\partial h(0)$ and $B^V(0)$ satisfy the assumptions of Corollary \ref{corsepcl}.
So there exists a $Z \in (L^0)^d$ such that
$$
- V \N{Z} = \essinf_{Y \in B^V(0)} \ang{Y,Z} > \esssup_{Y \in \partial h(0)} \ang{Y,Z}.
$$
But by Lemma \ref{lemmasublinear}, one has $h(Z) = \esssup_{Y \in \partial h(0)} \ang{Y,Z}$,
and one obtains a contradiction to \eqref{hboundbelow}. It follows that $A = \Omega$, which proves
the theorem.
\end{proof}

\begin{theorem} \label{thmconvdiff}
Let $f : (L^0)^d \to \overline{L}$ be an $L^0$-convex function and $X_0$ in 
$(L^0)^d$ such that $f(X_0) \in L^0$. Assume that
$\partial f(X_0) = \crl{Y}$ for some $Y \in (L^0)^d$. Then 
$f$ is $L^0$-differentiable at $X_0$ with $\nabla f(X_0) = Y$.
\end{theorem}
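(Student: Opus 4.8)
I would prove this in three steps: first that $X_0\in\inn(\dom f)$, then that $f'(X_0;\cdot)=\ang{\cdot,Y}$, and finally the differentiability estimate, the last step using the conditional Bolzano--Weierstrass theorem in place of compactness of the unit sphere.

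\textbf{Step 1: $X_0\in\inn(\dom f)$.} Since $\partial f(X_0)=\crl{Y}$ is non-empty, $f(X_0+X)-f(X_0)\ge\ang{X,Y}>-\infty$ for every $X$, so $f$ takes values in $\overline{L}$. Now suppose $X_0\notin\inn(\dom f)$. By Lemma~\ref{lemmastablef} $f$ is stable, hence $\dom f$ is $\sigma$-stable, so $\crl{A\in{\cal F}:1_AX_0\in\inn_A(\dom f)}$ has a largest element; on its complement $A\in{\cal F}_+$ the point $X_0$ is non-interior on every subset. If $\aff_A(\dom f)\neq 1_A(L^0)^d$, then $\lin_A(\dom f-X_0)$ is a proper $\sigma$-stable submodule of $1_A(L^0)^d$, and Corollary~\ref{corspan} yields, on some $A'\subseteq A$ with $\mu[A']>0$, a $Z$ with $\N{Z}>0$ on $A'$ orthogonal to it; then $\ang{X,Z}=0$ whenever $X_0+X\in\dom f$, so $Y+Z$ and $Y-Z$ are distinct $L^0$-subgradients of $f$ at $X_0$. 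If instead $\aff_A(\dom f)= 1_A(L^0)^d$, then $\inn_A(\dom f)\neq\emptyset$ by Lemma~\ref{lemmaintaff}, and since $\crl{X_0}$ and the $L^0$-open set $\inn_A(\dom f)$ are disjoint on every subset of $A$, Corollary~\ref{corsepopen} applied in $1_A(L^0)^d$ gives a $Z$ with $\N{Z}>0$ on $A$ and $\ang{X_0,Z}\ge\ang{V,Z}$ for all $V\in 1_A\dom f$ (using that $\inn_A(\dom f)$ is dense in $1_A\dom f$ by Lemma~\ref{lemmalambdaint}); hence $\ang{X,Z}\le 0$ whenever $X_0+X\in\dom f$, and $Y+Z$ is an $L^0$-subgradient distinct from $Y$. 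Either way $\partial f(X_0)=\crl{Y}$ is contradicted, so $X_0\in\inn(\dom f)$.

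\textbf{Step 2: $f'(X_0;\cdot)=\ang{\cdot,Y}$.} With $X_0\in\inn(\dom f)$, Theorem~\ref{thmconvexcont} makes $f$ sequentially continuous near $X_0$. By Lemma~\ref{lemmag} the directional derivative $g:=f'(X_0;\cdot)$ exists, is $L^0$-sublinear with $g(0)=0$, and $\partial g(0)=\partial f(X_0)=\crl{Y}$. Moreover $g$ is $L^0$-valued: $g(X)\ge\ang{X,Y}$ because $Y\in\partial f(X_0)$, and taking $N\in\mathbb{N}({\cal F})$ with $X_0+X/N\in\dom f$ gives $g(X)\le N\edg{f(X_0+X/N)-f(X_0)}\in L^0$. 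So $\dom g=(L^0)^d$, and Corollary~\ref{corconvexcotinuous} makes $g$ sequentially continuous, in particular sequentially lsc; Lemma~\ref{lemmasublinear} then gives $g(X)=\esssup_{Y'\in\partial g(0)}\ang{X,Y'}=\ang{X,Y}$ for all $X$.

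\textbf{Step 3: differentiability, and the main difficulty.} Set $r(X):=f(X_0+X)-f(X_0)-\ang{X,Y}$. Then $r$ is $L^0$-convex, $r\ge 0$ (as $Y\in\partial f(X_0)$), $r(0)=0$, $0\in\inn(\dom r)$, $r$ is sequentially continuous near $0$ by Theorem~\ref{thmconvexcont}, and $r'(0;X)=f'(X_0;X)-\ang{X,Y}=0$ by Step~2; moreover, by $L^0$-convexity, $s\mapsto r(sV)/s$ is non-decreasing for $s>0$ and decreases to $r'(0;V)$ as $s\downarrow 0$. Let $X_n\to 0$ a.e. with $\N{X_n}>0$ and write $X_n=t_nU_n$ with $t_n=\N{X_n}\to 0$, $\N{U_n}=1$; one must show the quotient $Z_n:=r(X_n)/t_n$ (which is $\ge 0$) tends to $0$ a.e. If not, $B:=\crl{\limsup_n Z_n>1/k}\in{\cal F}_+$ for some $k\in\mathbb{N}$, and defining $M_1:=\min\crl{n\ge 1:Z_n>1/k}$ and inductively $M_{j+1}:=\min\crl{n>M_j:Z_n>1/k}$ on $B$ (arbitrarily off $B$) gives an increasing sequence in $\mathbb{N}({\cal F})$ with $Z_{M_j}>1/k$ on $B$ and $t_{M_j}\to 0$. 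Since $(U_{M_j})$ is $L^0$-bounded, Theorem~\ref{thmBW} lets us pass to a further subsequence along which $U_{M_j}\to U$ a.e. with $\N{U}=1$. For each sufficiently small $\tau\in L^0_{++}$, monotonicity of the difference quotient gives $r(\tau U_{M_j})/\tau\ge r(X_{M_j})/t_{M_j}=Z_{M_j}>1/k$ on $B$ as soon as $t_{M_j}\le\tau$; letting $j\to\infty$ and using the sequential continuity of $r$ near $0$ yields $r(\tau U)/\tau\ge 1/k$ on $B$; letting $\tau\downarrow 0$ yields $r'(0;U)\ge 1/k$ on $B$, contradicting $r'(0;U)=0$. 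Hence $Z_n\to 0$ a.e., which is exactly $L^0$-differentiability of $f$ at $X_0$ with $\nabla f(X_0)=Y$. The heart of the matter is this last step: upgrading the direction-by-direction vanishing of the difference quotient to the uniform estimate required by $L^0$-differentiability. Over $\mathbb{R}^d$ this is supplied by compactness of the unit sphere; here Theorem~\ref{thmBW} is the substitute, but it has to be interlaced carefully with the monotonicity of difference quotients, the continuity from Theorem~\ref{thmconvexcont}, and the measurable choice of the indices $M_j$. Step~1 --- that a single-valued subdifferential forces $X_0$ into $\inn(\dom f)$ --- is also a genuine ingredient, resting on the separation machinery of Section~\ref{sec:sep}.
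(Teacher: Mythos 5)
Your proof is essentially correct, but it takes a genuinely different route from the paper's in both of its nontrivial steps. For the preliminary reduction, you prove the stronger statement $X_0 \in \inn(\dom f)$ by separating $\crl{1_A X_0}$ from $\dom f$ and exhibiting a second subgradient; the paper never establishes this. It only shows that the directional derivative $g = f'(X_0;\cdot)$ is finite everywhere, and it does so differently: assuming $g=+\infty$ somewhere, it builds a minorant $h$ that is $+\infty$ on an $L^0$-halfspace and derives a contradiction with $\underline{\conv}\, g = g^{**} = \ang{\cdot,Y}$ via Theorem \ref{thmFM}. Your route buys a cleaner statement ($X_0$ is interior, $f$ is continuous near $X_0$, Lemma \ref{lemmasublinear} applies directly), at the price of one extra case distinction: the dichotomy ``$\aff_A(\dom f) \neq 1_A(L^0)^d$ versus $=$'' is not exhaustive on all of $A$ at once, so you should first split $A$ (using Theorem \ref{thmspan} applied to $\dom f - X_0'$) into the piece where the affine hull is full and the piece where it is proper on every positive-measure subset, and run each argument on its piece; this is routine in the paper's framework but should be said. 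For the final differentiability estimate, the divergence is more substantial: you replace compactness of the unit sphere by Theorem \ref{thmBW}, extracting measurably indexed subsequences $M_j$, passing to a limit direction $U$, and playing the monotonicity of difference quotients against the continuity from Theorem \ref{thmconvexcont}. This works (the localizations ``$t_{M_j} \le \tau$'' and ``$\tau$ sufficiently small'' must be handled on the measurable sets where they hold, with those sets increasing to $\Omega$, as you indicate), but the paper avoids compactness altogether: it writes $X_n/\N{X_n}_1$ as an $L^0$-convex combination of the $2d$ fixed directions $\pm e_i$ with weights $|X^i_n|/\N{X_n}_1$, so that convexity bounds the difference quotient by a weighted average of the quotients along $\pm e_i$ at step size $\N{X_n}_1$, each of which converges to $\pm Y^i$ by Step 2; combined with the lower bound $r \ge 0$ from the subgradient inequality this squeezes the quotient to $0$. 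The paper's argument is shorter and needs only the $2d$ coordinate directions; yours generalizes more readily to situations without a distinguished finite generating set but is heavier to make fully rigorous.
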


\begin{proof}
By Lemma \ref{lemmag}, one has $\partial g(0) = \crl{Y}$ for the $L^0$-sublinear function 
$g(X) := f'(X_0;X)$. It follows that 
\be \label{g*}
g^*(Z) = 1_{\crl{Z \neq Y}} (+ \infty) \quad \mbox{and} \quad g^{**}(X) = \ang{X,Y}.
\ee Set
$$
{\cal A} := \crl{A \in {\cal F} : \mbox{ there exists an } X \in (L^0)^d \mbox{ such that }
g(X) = + \infty \mbox{ on } A}.
$$
By Lemma \ref{lemmastablef}, $g$ is stable. Therefore, there exists a sequence 
$(A_n)_{n \in \mathbb{N}}$ in ${\cal A}$ with corresponding $X_n$ such that 
$A_n \uparrow A := \esssup {\cal A}$. The element 
$$
X_0 := 1_{A_1 \cup A^c} X_1 + \sum_{n \ge 2} 1_{A_n \setminus A_{n-1}} X_n
$$
satisfies $g(X_0) = + \infty$ on $A$. We want to show that $\mu[A] = 0$. 
So let us assume $\mu[A] >0$. If one replaces $\Omega$ with $A$, one has
$0 \notin 1_B(\dom g - X_0)$ for all $B \in {\cal F}_+$. By Theorem \ref{thmweaksep},
there exists a $Z \in (L^0)^d$ such that 
$$
\N{Z} > 0 \quad \mbox{and} \quad
\essinf_{X \in \dom g} \ang{X,Z} \ge \ang{X_0,Z}. 
$$
Define the sequentially lsc $L^0$-convex function $h : (L^0)^d \to \overline{L}$ as follows:
$$
h(X) := \ang{X,Y} 1_{\crl{\ang{X,Z} \ge \ang{X_0,Z}}} + \infty 1_{\crl{\ang{X,Z} < \ang{X_0,Z}}}.
$$
Then $g \ge h$ and $h(X) = + \infty$ for all $X \in (L^0)^d$ satisfying $\ang{X,Z} < \ang{X_0,Z}$.
It follows that $\underline{\rm conv}g(X) = + \infty$ for all $X \in (L^0)^d$ satisfying $\ang{X,Z} < \ang{X_0,Z}$.
Moreover, since $Y \in \partial g(0)$, $g$ fulfills the assumptions of Theorem \ref{thmFM}, and one 
obtains $\underline{\rm conv}g = g^{**}$, contradicting \eqref{g*}. 
So one must have $\mu[A] =0$, or in other words, $g(X) \in L^0$ for all $X \in (L^0)^d$.
It follows from Theorem \ref{thmconvexcont} that $g$ is sequentially continuous, and therefore,
$g(X) = g^{**}(X) = \ang{X,Y}$ for all $X \in (L^0)^d$.

Now let $(X_n)_{n \in \mathbb{N}}$ be a sequence in $(L^0)^d$ such that 
$X_n \to 0$ a.e. and $\N{X_n} > 0$ for all $n$. Denote $\N{X_n}_1 := 
\sum_{i=1}^d |X^i_n|$ and notice that there exists a constant $c >0$ such that 
$\N{X_n}_1 \le c \N{X_n}$ for all $n$. Since $g(X) = \ang{X,Y}$, one has for all $i=1, \dots, d$,
$$
\frac{f(X_0 \pm \N{X_n}_1 e_i) - f(X_0)}{\N{X_n}_1} \to \pm Y^i \quad \mbox{a.e.}
$$
Therefore,
\beas
&& \frac{f(X_0 + X_n) - f(X_0) - \ang{X_n,Y}}{\N{X_n}} \le 
c \frac{f(X_0 + X_n) - f(X_0) - \ang{X_n,Y}}{\N{X_n}_1}\\
&\le& c \sum_{i=1}^d 
\frac{|X^i_n|}{\N{X_n}_1} \crl{\frac{f(X_0 + \N{X_n}_1 \mbox{sign}(X^i_n) e_i) - f(X_0)}{\N{X_n}_1}
- \mbox{sign}(X^i_n) Y^i } \to 0 \quad \mbox{a.e.}
\eeas
\end{proof}

\setcounter{equation}{0}
\section{Inf-convolution}
\label{sec:convolution}

\begin{definition}
We define the inf-convolution of finitely many functions $f_j : (L^0)^d \to \overline{L}$, $j = 1, \dots, n$, by 
$$
\Box_{j = 1}^n f_j(X) := \essinf_{X_1 + \dots + X_n = X} \sum_{j=1}^n f_j(X_j).
$$
\end{definition}

\begin{lemma} \label{lemmainfconvcon}
If $f_j$, $j = 1, \dots, n$, are $L^0$-convex functions from $(L^0)^d$ to $\overline{L}$, then
$\Box_{j=1}^n f_j$ is $L^0$-convex too.
\end{lemma}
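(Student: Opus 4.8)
The plan is to fix $X,Y \in (L^0)^d$ and $\lambda \in L^0$ with $0 \le \lambda \le 1$, write $g := \Box_{j=1}^n f_j$, and prove directly that $g(\lambda X + (1-\lambda)Y) \le \lambda\, g(X) + (1-\lambda)\, g(Y)$; no reduction to $n=2$ is needed. The elementary input is that any two decompositions $X = \sum_{j=1}^n X_j$ and $Y = \sum_{j=1}^n Y_j$ add up to a decomposition $\lambda X + (1-\lambda)Y = \sum_{j=1}^n\brak{\lambda X_j + (1-\lambda)Y_j}$, so $L^0$-convexity of each $f_j$ gives
$$
g(\lambda X + (1-\lambda)Y) \le \sum_{j=1}^n f_j\brak{\lambda X_j + (1-\lambda)Y_j} \le \lambda \sum_{j=1}^n f_j(X_j) + (1-\lambda) \sum_{j=1}^n f_j(Y_j).
$$
Everything then reduces to passing to the infimum over the decompositions of $X$ and of $Y$ separately.

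To carry this out I would first record that for each fixed $X$ the family $\mathcal{D}_X := \crl{\sum_{j=1}^n f_j(X_j) : X_j \in (L^0)^d,\ \sum_{j=1}^n X_j = X} \subseteq L$ is non-empty (take $X_1 = X$ and $X_2 = \dots = X_n = 0$) and directed downwards. For directedness, given two of its elements with sums $S^1, S^2$ coming from decompositions $(X_j^1)_j$ and $(X_j^2)_j$, put $A := \crl{S^1 \le S^2} \in {\cal F}$ and $X_j := 1_A X_j^1 + 1_{A^c} X_j^2$; then $\sum_j X_j = X$, and since each $f_j$ is stable by Lemma \ref{lemmastablef}, $\sum_j f_j(X_j) = 1_A S^1 + 1_{A^c} S^2 = S^1 \wedge S^2 \in \mathcal{D}_X$. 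Hence there is a sequence $(S^X_k)_k$ in $\mathcal{D}_X$ with $S^X_k \downarrow g(X)$ a.e., and likewise a sequence $(S^Y_k)_k$ in $\mathcal{D}_Y$ with $S^Y_k \downarrow g(Y)$ a.e.; write $(X_j^{(k)})_j$ and $(Y_j^{(k)})_j$ for corresponding decompositions.

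Finally, applying the displayed inequality to $(X_j^{(k)})_j$ and $(Y_j^{(k)})_j$ yields $g(\lambda X + (1-\lambda)Y) \le \lambda S^X_k + (1-\lambda) S^Y_k$ for every $k$, and letting $k \to \infty$ gives the claim, because $\lambda S^X_k + (1-\lambda) S^Y_k$ decreases a.e. to $\lambda\, g(X) + (1-\lambda)\, g(Y)$ (the conventions $+\infty - \infty = +\infty$ and $0 \cdot (\pm\infty) = 0$ cause no difficulty here since $\lambda, 1-\lambda \ge 0$). The only step that is not pure real-variable bookkeeping is the directedness of $\mathcal{D}_X$, i.e. splicing two competing decompositions along $\crl{S^1 \le S^2}$ using the stability of the $f_j$; I expect that to be the one place where the module structure genuinely enters.
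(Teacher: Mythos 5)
Your proof is correct and follows essentially the same route as the paper's: both rest on the stability of the $f_j$ (Lemma \ref{lemmastablef}) to show that the family of decomposition-sums is directed downwards, extract decreasing sequences attaining the essential infima, and apply the $L^0$-convexity of each $f_j$ termwise to a combined decomposition of $\lambda X + (1-\lambda)Y$. The only difference is one of bookkeeping: the paper glues the approximating decompositions along a partition $A_k \setminus A_{k-1}$ to produce a single decomposition within $\varepsilon$ of a bound $V \ge f(X)$, whereas you keep the whole sequences and pass to the limit at the end; both handle the possible $-\infty$ values of the infimum correctly.
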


\begin{proof}
Denote $f = \Box_{j=1}^n f_j$.
Choose $X,Y \in (L^0)^d$ and $V,W \in \overline{L}$ such that $f(X) \le V$ and $f(Y) \le W$.
Let $\varepsilon \in L^0_{++}$ and $\lambda \in L^0$ such that $0 \le \lambda \le 1$.
By Lemma \ref{lemmastablef}, the functions $f_j$ are stable. 
Therefore, the family $\crl{\sum_j f_j(X_j) : \sum_j X_j = X}$ is directed downwards. 
So there exist sequences $X^k_j,$ $k \in \mathbb{N}$, such that 
$\sum_j X^k_j = X$ and $\sum_j f_j(X^k_j)$ decreases to 
$f(X)$ a.e. It follows that the sets $A_k := \crl{\sum_j f_j(X^k_j) \le V + \varepsilon}$
increase to $\Omega$ as $k \to \infty$. So for every $j=1,\dots, n$, 
$$
X_j := \sum_{k \ge 1} 1_{A_k \setminus A_{k-1}} X^k_j, \quad \mbox{where } 
A_0 := \emptyset.
$$
defines an element in $(L^0)^d$ such that $\sum_{j=1}^n X_j = X$ 
and $\sum_{j=1}^n f(X_j) \le V + \varepsilon$. Analogously, there exist 
$Y_j \in (L^0)^d$, $j=1, \dots, n$, such that 
$\sum_{j=1}^n Y_j = Y$ and $\sum_{j=1}^n f(Y_j) \le W + \varepsilon$. 
Set $Z_j = \lambda X_j + (1-\lambda) Y_j$. Then 
$Z := \sum_{j=1}^n Z_j = \lambda X + (1-\lambda)Y$ and 
$$
f(Z) \le \sum_{j=1}^n f_j(Z_j) \le \sum_{j=1}^n \lambda f_j(X_j) + (1-\lambda) f(Y_j)
\le \lambda V + (1-\lambda) W + \varepsilon.
$$
It follows that $f(Z) \le \lambda f(X) + (1-\lambda) f(Y)$.
\end{proof}

\begin{lemma} \label{lemmaconvint}
Let $f_j : (L^0)^d \to \overline{L}$, $j=1, \dots, n$, be $L^0$-convex functions
and denote $f = \Box_{j=1}^n f_j$. Assume $f(X_0) = \sum_{j=1}^n f_j(X_j) < + \infty$ for some 
$X_j \in (L^0)^d$ summing up to $X_0$. If $X_1 \in \inn (\dom f_1)$, then 
$f(X) \in \overline{L}$ for all $X \in (L^0)^d$, $X_0 \in \inn (\dom f)$ and 
$f$ is sequentially continuous on $\inn (\dom f)$.
\end{lemma}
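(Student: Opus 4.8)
The plan is to reduce the statement to Theorem \ref{thmconvexcont} applied to the inf-convolution $f = \Box_{j=1}^n f_j$. By Lemma \ref{lemmainfconvcon}, $f$ is $L^0$-convex, so $\dom f$ is defined, and since $f(X_0) \le \sum_{j=1}^n f_j(X_j) < +\infty$, the set $\dom f$ is non-empty and contains $X_0$. Thus, once I show that $X_0 \in \inn(\dom f)$ and $f(X_0) \in L^0$, Theorem \ref{thmconvexcont} yields at once that $f(X) \in \overline{L}$ for all $X \in (L^0)^d$ and that $f$ is sequentially continuous on $\inn(\dom f)$, which is the whole assertion.

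First I would check $f(X_0) \in L^0$. The hypothesis $\sum_{j=1}^n f_j(X_j) < +\infty$ holds a.e., while every $f_j$ takes values in $\overline{L}$, so $f_j(X_j) > -\infty$ for each $j$; hence each $f_j(X_j)$ is a.e. real-valued, i.e. $f_j(X_j) \in L^0$, and therefore $f(X_0) = \sum_{j=1}^n f_j(X_j) \in L^0$.

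The main step is to prove $X_0 \in \inn(\dom f)$. Since $X_1 \in \inn(\dom f_1)$, there is an $\varepsilon \in L^0_{++}$ such that $f_1(Y) < +\infty$ whenever $\N{Y-X_1} \le \varepsilon$. Take any $X \in (L^0)^d$ with $\N{X-X_0} \le \varepsilon$ and use the decomposition $X = (X_1 + X - X_0) + X_2 + \dots + X_n$, which is admissible because $X_1 + \dots + X_n = X_0$. Since $\N{(X_1 + X - X_0) - X_1} = \N{X-X_0} \le \varepsilon$, the first component lies in $\dom f_1$, so $f_1(X_1 + X - X_0) \in L^0$, and consequently
$$
f(X) \le f_1(X_1 + X - X_0) + \sum_{j=2}^n f_j(X_j) < +\infty ,
$$
using that $f_j(X_j) \in L^0$ for $j = 2, \dots, n$. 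Hence the $\varepsilon$-ball around $X_0$ is contained in $\dom f$, and as $X_0 \in \dom f$ this gives $X_0 \in \inn(\dom f)$.

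Finally I would invoke Theorem \ref{thmconvexcont} with this $X_0$. I do not expect a genuine difficulty anywhere: the only two points that need a little care are that $f(X_0)$ lands in $L^0$ (not merely $< +\infty$) and that perturbing only the first component of the chosen decomposition keeps it inside $\dom f_1$; the rest is a direct appeal to Lemma \ref{lemmainfconvcon} and Theorem \ref{thmconvexcont}.
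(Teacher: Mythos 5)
Your proposal is correct and follows essentially the same route as the paper: the key inequality $f(X_0+X)-f(X_0)\le f_1(X_1+X)-f_1(X_1)$, obtained by perturbing only the first component of the given decomposition, is exactly the paper's argument for $X_0\in\inn(\dom f)$, after which both proofs conclude by Theorem \ref{thmconvexcont}. Your extra remarks (that $f(X_0)\in L^0$ because each $f_j(X_j)>-\infty$, and the appeal to Lemma \ref{lemmainfconvcon} for $L^0$-convexity of $f$) just make explicit what the paper leaves implicit.
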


\begin{proof}
By definition of $f$, one has
$$
f(X_0 + X) - f(X_0) \le  f_1(X_1 + X) + \sum_{j=2}^n f_j(X_j) - \sum_{j=1}^n f_j(X_j) =
f_1(X_1 + X) - f_1(X_1) 
$$
for all $X \in (L^0)^d$. This shows that $X_0 \in \inn(\dom f)$. Since 
$f(X_0) = \sum_{j=1}^n f_j(X_j) \in L^0$, the rest of the lemma follows from Theorem \ref{thmconvexcont}.
\end{proof}

\begin{lemma} \label{lemmaconvsub}
Consider functions $f_j : (L^0)^d \to \overline{L}$, $j=1, \dots, n$,
and denote $f = \Box_{j=1}^n f_j$. Assume $f(X_0) = \sum_{j=1}^n f_j(X_j) < + \infty$ for some 
$X_j \in (L^0)^d$ summing up to $X_0$.
Then $\partial f(X_0) = \bigcap_{j=1}^n \partial f_j(X_j)$.
\end{lemma}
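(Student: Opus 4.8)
The plan is to establish the two inclusions separately. Neither direction needs any of the heavier machinery developed earlier (no Bolzano--Weierstrass, no separation, no Fenchel--Moreau); only the definitions of the inf-convolution and of the $L^0$-subgradient are used, together with the observation that all the values in play lie in $L^0$. Concretely, I would first record that under the hypothesis $f(X_0)=\sum_{j=1}^n f_j(X_j)<+\infty$ each summand $f_j(X_j)$ is actually in $L^0$: every $f_j$ takes values in $\overline{L}$, so $f_j(X_j)>-\infty$, and since $\sum_j f_j(X_j)<+\infty$ everywhere, no $f_j(X_j)$ can equal $+\infty$ on a set of positive measure. Hence $f_j(X_j)\in L^0$ for all $j$ and $f(X_0)=\sum_j f_j(X_j)\in L^0$, which is exactly what is needed for $\partial f(X_0)$ and each $\partial f_j(X_j)$ to be meaningful.

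For the inclusion $\bigcap_{j=1}^n\partial f_j(X_j)\subseteq\partial f(X_0)$, I would take $Y$ in the intersection, fix an arbitrary $X\in(L^0)^d$, and consider an arbitrary decomposition $X_0+X=\sum_{j=1}^n W_j$. Setting $Z_j:=W_j-X_j$ so that $\sum_j Z_j=X$, the subgradient inequalities for the $f_j$ at $X_j$ give $\sum_j f_j(W_j)=\sum_j f_j(X_j+Z_j)\ge\sum_j\big(f_j(X_j)+\ang{Z_j,Y}\big)=f(X_0)+\ang{X,Y}$. Since this lower bound is independent of the chosen decomposition, it descends to the essential infimum, yielding $f(X_0+X)\ge f(X_0)+\ang{X,Y}$; as $X$ was arbitrary and $f(X_0)\in L^0$, this means $Y\in\partial f(X_0)$.

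For the reverse inclusion $\partial f(X_0)\subseteq\bigcap_{j=1}^n\partial f_j(X_j)$, I would take $Y\in\partial f(X_0)$, fix an index (say $j=1$ without loss of generality), and for $Z\in(L^0)^d$ plug the particular decomposition $X_0+Z=(X_1+Z)+X_2+\dots+X_n$ into the definition of $f$: this gives $f(X_0+Z)\le f_1(X_1+Z)+\sum_{i=2}^n f_i(X_i)=f_1(X_1+Z)-f_1(X_1)+f(X_0)$. Combining with $f(X_0+Z)-f(X_0)\ge\ang{Z,Y}$ and rearranging yields $f_1(X_1+Z)-f_1(X_1)\ge\ang{Z,Y}$ for all $Z$, i.e. $Y\in\partial f_1(X_1)$; repeating for each index finishes the proof.

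I do not anticipate a genuine obstacle here: the only two points that deserve a word of care are the finiteness bookkeeping in the first step (so that the subgradient inequalities are literally applicable) and the passage from a bound valid for every competitor in the inf-convolution to a bound on the essential infimum in the $\supseteq$ direction. I would state that last step explicitly rather than leave it implicit.
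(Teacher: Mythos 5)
Your proof is correct and follows essentially the same route as the paper's: the same particular decomposition $X_0+Z=(X_1+Z)+X_2+\dots+X_n$ for the inclusion $\partial f(X_0)\subseteq\bigcap_j\partial f_j(X_j)$, and the same summation of the individual subgradient inequalities over an arbitrary competitor decomposition for the reverse inclusion. The extra finiteness bookkeeping and the explicit passage to the essential infimum are points the paper leaves implicit, but they add nothing beyond what is already there.
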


\begin{proof}
Assume $Y \in \partial f(X_0)$ and $X \in (L^0)^d$. Then 
$$
f_1(X_1 + X) - f_1(X_1) = f_1(X_1 + X) + \sum_{j=2}^n f_j(X_j) - \sum_{j=1}^n f_j(X_j) 
\ge f(X_0 + X) - f(X_0) \ge \ang{X,Y}.
$$
Hence $Y \in \partial f_1(X_1)$, and by symmetry, $\partial f(X_0)
\subseteq \bigcap_{j=1}^n \partial f_j(X_j).$ On the other hand, if $Y \in \bigcap_{j=1}^n \partial f_j(X_j)$
and $X \in (L^0)^d$, choose $Z_j$ such that $\sum_{j=1}^n Z_j = X_0 + X$.
Then 
$$
\sum_{j=1}^n f_j(Z_j) \ge \sum_{j=1}^n f_j(X_j) + \ang{Z_j- X_j, Y} =  \sum_{j=1}^n f_j(X_j) + \ang{X,Y}.
$$
So $f(X_0+X) - f(X_0) \ge \ang{X,Y}$, and the lemma follows.
\end{proof}

\begin{lemma} \label{lemmaconvdiff}
Let $f_j : (L^0)^d \to \overline{L}$, $j=1, \dots, n$, be $L^0$-convex functions and denote 
$f = \Box_{j=1}^n f_j$. Assume $f(X_0) = \sum_j f_j(X_j) < +\infty$ for some 
$X_j \in (L^0)^d$ summing up to $X_0$ and $f_1$ is $L^0$-differentiable at $X_1$. 
Then $f$ is $L^0$-differentiable at $X_0$ with $\nabla f(X_0) = \nabla f_1(X_1)$.
\end{lemma}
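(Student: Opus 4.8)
The plan is to reduce the statement to Theorem~\ref{thmconvdiff} by showing that $\partial f(X_0)$ equals the singleton $\{\nabla f_1(X_1)\}$, where $f:=\Box_{j=1}^{n}f_j$. Write $Y:=\nabla f_1(X_1)$. Since $f_1$ is $L^0$-differentiable at $X_1$ we have $f_1(X_1)\in L^0$, and the hypothesis $f(X_0)=\sum_j f_j(X_j)<+\infty$ together with $f_j(X_j)\in\overline{L}$ forces every $f_j(X_j)$, and hence $f(X_0)$, to lie in $L^0$; moreover $f$ is $L^0$-convex by Lemma~\ref{lemmainfconvcon}.

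The first step is to translate $L^0$-differentiability of $f_1$ at $X_1$ into a statement about the one-sided directional derivatives provided by Lemma~\ref{lemmag}, namely $f_1'(X_1;X)=\ang{X,Y}$ for every $X\in(L^0)^d$. On $\{X=0\}$ this is immediate from stability of $f_1$ (Lemma~\ref{lemmastablef}); on $\{\N{X}>0\}$ one applies the defining limit of $L^0$-differentiability to the everywhere non-vanishing sequence $1_{\{\N{X}>0\}}X/m+1_{\{X=0\}}e_1/m$ and localizes via stability, obtaining $m\,[f_1(X_1+X/m)-f_1(X_1)]\to\ang{X,Y}$ there.

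Next I would identify $\partial f(X_0)$. By Lemma~\ref{lemmag} applied to $f$, the function $g:=f'(X_0;\cdot)$ is $L^0$-sublinear with $g(0)=0$ and $\partial f(X_0)=\partial g(0)$. Taking the decomposition $(X_1+X,X_2,\dots,X_n)$ of $X_0+X$ in the definition of the inf-convolution gives $f(X_0+X)-f(X_0)\le f_1(X_1+X)-f_1(X_1)$ for all $X$, so dividing by $1/m$ and letting $m\to\infty$ yields $g(X)\le f_1'(X_1;X)=\ang{X,Y}$. Applying this with $-X$ and using that $g$ is $L^0$-convex, hence $g(X)+g(-X)\ge 2g(0)=0$ (here $g(-X)\le\ang{-X,Y}<+\infty$ excludes an infinite-value obstruction, and forces $g$ to be real-valued), gives $g(X)\ge-g(-X)\ge\ang{X,Y}$. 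Thus $g(X)=\ang{X,Y}$ for all $X$, and therefore $\partial f(X_0)=\partial g(0)$ consists of those $Z$ with $\ang{X,Y-Z}\ge 0$ for all $X$, i.e. $\partial f(X_0)=\{Y\}$ (take $X=Z-Y$). In particular $\partial f(X_0)\neq\emptyset$, so $f(X_0+X)\ge f(X_0)+\ang{X,Y}>-\infty$ and $f$ takes values in $\overline{L}$. Now $f$ is an $L^0$-convex function from $(L^0)^d$ to $\overline{L}$ with $f(X_0)\in L^0$ and $\partial f(X_0)=\{Y\}$, so Theorem~\ref{thmconvdiff} yields that $f$ is $L^0$-differentiable at $X_0$ with $\nabla f(X_0)=Y=\nabla f_1(X_1)$. (Alternatively, $\partial f(X_0)\subseteq\partial f_1(X_1)=\{Y\}$ follows from Lemma~\ref{lemmaconvsub}, while nonemptiness follows from Lemma~\ref{lemmaconvint} and Theorem~\ref{thmexsubdiff}, using that $L^0$-differentiability of $f_1$ at $X_1$ implies $X_1\in\inn(\dom f_1)$.)

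The step requiring the most care is the first one: passing from the sequence-based definition of $L^0$-differentiability of $f_1$ to the pointwise identity $f_1'(X_1;\cdot)=\ang{\cdot,Y}$, in particular handling directions that vanish on a set of positive $\mu$-measure by exploiting the locality (stability) of $f_1$. Once this identity and the inf-convolution inequality are in hand, the remainder is a routine assembly of Lemmas~\ref{lemmag} and~\ref{lemmainfconvcon} with Theorem~\ref{thmconvdiff}.
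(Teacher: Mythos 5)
Your proposal is correct and follows essentially the same route as the paper's own proof: bound $g=f'(X_0;\cdot)$ above by $f_1'(X_1;\cdot)=\ang{\cdot,\nabla f_1(X_1)}$ via the inf-convolution inequality, use $L^0$-sublinearity of $g$ (from Lemma \ref{lemmainfconvcon} and Lemma \ref{lemmag}) to upgrade this to equality, conclude $\partial f(X_0)=\partial g(0)=\{\nabla f_1(X_1)\}$, and invoke Theorem \ref{thmconvdiff}. You additionally fill in two details the paper leaves implicit (extracting $f_1'(X_1;\cdot)=\ang{\cdot,Y}$ from the sequence-based definition of differentiability on $\{X=0\}$, and verifying $f>-\infty$ so that Theorem \ref{thmconvdiff} applies), which is welcome but does not change the argument.
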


\begin{proof}
One has
$$
f(X_0 + X) - f(X_0) \le f_1(X_1 + X) + \sum_{j=2}^n f_j(X_j) - \sum_{j=1}^n f_j(X_j) = f_1(X_1 + X) - f_1(X_1)
$$
for all $X \in (L^0)^d$. It follows that the $L^0$-directional derivative $g(X) := f'(X_0;X)$ satisfies
$$
g(X) \le f_1'(X_1;X) = \ang{X,\nabla f_1(X_1)}
$$
for all $X \in (L^0)^d$. But by Lemma \ref{lemmainfconvcon}, $f$ is $L^0$-convex. It follows that
$g$ is $L^0$-sublinear, and therefore, $g(X) = \ang{X, \nabla f_1(X_1)}$.
This implies that $\partial f(X_0) = \partial g(0) = \crl{\nabla f_1(X_1)}$. Now the lemma 
follows from Theorem \ref{thmconvdiff}.
\end{proof}

\begin{lemma} \label{lemmaconvconj}
Consider functions $f_j : (L^0)^d \to \overline{L}$, $j=1, \dots, n$.
Then $\brak{\Box_{j=1}^n f_j}^* = \sum_{j=1}^n f^*_j,$ where the sum 
is understood to be $- \infty$ if at least one of the terms is $- \infty$.
\end{lemma}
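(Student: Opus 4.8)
The plan is to compute $(\Box_{j=1}^n f_j)^*(Y)$ directly for an arbitrary $Y \in (L^0)^d$ and reduce the claim to a ``separability of the supremum'' identity. Writing $f = \Box_{j=1}^n f_j$, the first step is to unfold the definitions. For fixed $X$, since $\ang{X,Y} \in L^0$ is finite and adding a fixed element of $L^0$ is an order isomorphism of $L$, it commutes with $\essinf$ and $\esssup$, so
$$
\ang{X,Y} - f(X) = \ang{X,Y} - \essinf_{X_1 + \dots + X_n = X} \sum_{j=1}^n f_j(X_j) = \esssup_{X_1 + \dots + X_n = X} \crl{ \ang{X,Y} - \sum_{j=1}^n f_j(X_j) }.
$$
Taking the $\esssup$ over $X$, merging the two iterated suprema into a single one over all tuples $(X_1, \dots, X_n) \in ((L^0)^d)^n$, and using $\ang{\sum_j X_j, Y} = \sum_j \ang{X_j, Y}$, one obtains
$$
f^*(Y) = \esssup_{(X_1, \dots, X_n)} \sum_{j=1}^n \brak{ \ang{X_j, Y} - f_j(X_j) }.
$$
Setting $g_j(X) := \ang{X,Y} - f_j(X)$, so that $\esssup_{X_j} g_j(X_j) = f_j^*(Y)$, the lemma becomes the statement $\esssup_{(X_1, \dots, X_n)} \sum_j g_j(X_j) = \sum_j f_j^*(Y)$, with the right-hand side read as $-\infty$ as soon as one summand is $-\infty$.

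For the inequality ``$\le$'', each summand satisfies $g_j(X_j) \le f_j^*(Y)$ by definition of the conjugate, so what remains is to add these $n$ inequalities and take the $\esssup$; this is where the $\pm\infty$ bookkeeping enters. The key point is that $f_j^*(Y) = -\infty$ on a set $A \in {\cal F}_+$ forces $f_j(X) = +\infty$ on $A$ for every $X$ (because $\ang{X,Y}$ is a.e.\ finite and $f_j^*$ is an $\esssup$), hence on $A$ every term of the displayed $\esssup$ defining $f^*(Y)$ equals $-\infty$ and thus $f^*(Y) = -\infty = \sum_j f_j^*(Y)$ on $A$. On the set $B$ where all $f_j^*(Y) > -\infty$, the $n$ inequalities may be added without any $\infty - \infty$ ambiguity --- each $g_j(X_j) < +\infty$ because $f_j$ takes values in $\overline{L}$ --- and taking the $\esssup$ over tuples yields $f^*(Y) \le \sum_j f_j^*(Y)$ there.

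For the reverse inequality I would use that each $f_j$ is stable by Lemma \ref{lemmastablef}, hence $g_j$ is stable, hence the family $\crl{ g_j(X_j) : X_j \in (L^0)^d }$ is directed upwards; by the standard selection argument recalled in the Notation there is a sequence $(X_j^{(k)})_{k \in \mathbb{N}}$ with $g_j(X_j^{(k)}) \uparrow f_j^*(Y)$ a.e. Evaluating the $\esssup$ for $f^*(Y)$ at the tuple $(X_1^{(k)}, \dots, X_n^{(k)})$ gives $f^*(Y) \ge \sum_j g_j(X_j^{(k)})$ for every $k$; letting $k \to \infty$, the right-hand side increases to $\sum_j f_j^*(Y)$ on $B$ (again no $\infty - \infty$ arises, since each $g_j(X_j^{(k)}) < +\infty$), so $f^*(Y) \ge \sum_j f_j^*(Y)$ on $B$, while on $B^c$ both sides are $-\infty$ by the argument above. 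Combining the two inequalities gives $f^* = \sum_j f_j^*$ on all of $\Omega$.

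The only genuinely delicate part is the bookkeeping around the convention for the sum when some $f_j^*(Y)$ equals $-\infty$; the rest is routine manipulation of least upper bounds. The one structural input that is really needed is the stability of the $f_j$ coming from $L^0$-convexity: it is what lets the coordinatewise maximizing sequences be chosen monotone, so that the pointwise ``separability of the sup'' passes to the conditional setting.
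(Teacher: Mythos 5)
Your overall route is the same as the paper's: the paper's entire proof of this lemma is the chain of equalities
$$
\brak{\Box_{j=1}^n f_j}^*(Y) = \esssup_X \crl{\ang{X,Y} - \Box_{j=1}^n f_j(X)}
= \esssup_{X_1, \dots, X_n} \sum_{j=1}^n \crl{\ang{X_j,Y} - f_j(X_j)} = \sum_{j=1}^n f^*_j(Y),
$$
and your write-up is exactly that computation with the $\pm\infty$ bookkeeping made explicit. Your treatment of the first two equalities and of the ``$\le$'' half of the last one is correct and in fact more careful than the paper's.

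There is, however, one step in your reverse inequality that fails under the stated hypotheses. You invoke Lemma \ref{lemmastablef} to conclude that each $f_j$ is stable, hence that $\crl{g_j(X_j) : X_j \in (L^0)^d}$ is directed upwards, hence that monotone maximizing sequences exist. But Lemma \ref{lemmastablef} requires $L^0$-convexity, and the present lemma assumes nothing about the $f_j$ beyond $\overline{L}$-valuedness; your closing remark that stability is ``the one structural input that is really needed'' attributes to the lemma a hypothesis it does not have. For a non-stable $f_j$ the family need not be directed and no increasing sequence with $g_j(X_j^{(k)}) \uparrow f_j^*(Y)$ need exist. The gap is easily repaired without directedness: the countable-representation property of $\esssup$ recalled in the Notation holds for arbitrary non-empty families, so there are sequences $(X_j^{(k)})_{k}$ with $f_j^*(Y) = \sup_k g_j(X_j^{(k)})$, and then a.e.\ on the set $B$ where all $f_j^*(Y) > -\infty$ one has
$$
\sum_{j=1}^n f_j^*(Y) = \sum_{j=1}^n \sup_{k} g_j\bigl(X_j^{(k)}\bigr)
= \sup_{k_1, \dots, k_n} \sum_{j=1}^n g_j\bigl(X_j^{(k_j)}\bigr)
\le \esssup_{(X_1,\dots,X_n)} \sum_{j=1}^n g_j(X_j),
$$
with no $\infty - \infty$ ambiguity since each $g_j < +\infty$ and each $f_j^*(Y) > -\infty$ on $B$. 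With that substitution your argument is complete and coincides with the paper's, only spelled out in more detail.
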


\begin{proof}
$$\brak{\Box_{j=1}^n f_j}^*(Y) = \esssup_X \crl{\ang{X,Y} - \Box_{j=1}^n f_j(X)}
= \esssup_{X_1, \dots, X_n} \sum_{j=1}^n \crl{\ang{X_j,Y} - f_j(X_j)} = \sum_{j=1}^n f^*_j(Y).
$$
\end{proof}

\end{document}